\newtheorem{theorem}{Theorem}[section]
\newtheorem{lemma}[theorem]{Lemma}
\newtheorem{proposition}[theorem]{Proposition}
\newtheorem{corollary}[theorem]{Corollary}
\newtheorem{conjecture}[theorem]{Conjecture}
\newtheorem{definition}[theorem]{Definition}
\newtheorem{example}[theorem]{Example}
\author{Erik Sj\"oland}
\title{Enumeration of three term arithmetic progressions in fixed density sets}
\begin{document}

\newpage

\maketitle

\begin{abstract}
Additive combinatorics is built around the famous theorem by Szemer\'edi which asserts existence of arithmetic progressions of any length among the integers. There exist several different proofs of the theorem based on very different techniques. Szemer\'edi's theorem is an existence statement, whereas the ultimate goal in combinatorics is always to make enumeration statements. In this article we develop new methods based on real algebraic geometry to obtain several quantitative statements on the number of arithmetic progressions in fixed density sets. We further discuss the possibility of a generalization of Szemer\'edi's theorem using methods from real algebraic geometry.
\end{abstract}

\tableofcontents

\newpage

\section{Introduction}
\label{sec:density}
Finding an optimal lower bound for the number of arithmetic progressions of length 3 in any subset of fixed cardinality in a set is extremely difficult, and finding this for the cyclic group would imply close to optimal bounds to Szemer\'edi's theorem. In this paper we develop new methods that theoretically will achieve the optimal lower bound by Putinar's Positivstellensatz, and provide a relaxation scheme based on the Lasserre hierarchy providing lower bounds to the number of arithmetic progressions. The methods are general and can be applied to any fixed density set. We apply the methods to find the first known lower bound for the number of arithmetic progressions in any subset $S$ of the cyclic group $\mathbb{Z}_p$ with a fixed density $\frac{|S|}{p}$ for any prime $p$ (Theorem \ref{thm:density}), and investigate how much this can be improved using a similar certificate (Theorem \ref{thm:density2} and Corollary \ref{cor:density}). We discuss a possible way to improve the bounds further, which in theory could ultimately generalize Szemer\'edi's Theorem, in Section \ref{sec:discussion}. We provide sharper lower bounds with algebraic certificate for all primes $p \leq 17$ (Theorem \ref{thm:smallprimes}), and in Section \ref{sec:numerical2} we discuss lower bounds found using a numerical certificate for all primes $p \leq 613$ and the correct results up to $p \leq 32$ found by searching through all different possibilities. We have provided tables of all these exact lower bounds in the appendix.


\section{Results}
The easiest is always to consider small examples. Let $W(k,[n],D/n)$ denote the optimal lower bound for the number of arithmetic progressions of length $k$ in any subset of $[n]$ of cardinality $D$. Let $W(k,\mathbb{Z}_n,D/n)$ denote the optimal lower bound for the number of arithmetic progressions of length $k$ in any subset of $\mathbb{Z}_n$ of cardinality $D$. Using the gray code for fixed density necklaces that is introduced in Section \ref{sec:graycode} we can find the exact value for $W(k,\mathbb{Z}_n,D/n)$ when $n \leq 32$ and $D \in \{0,1,\dots,n\}$ for any $k \in \{1,\dots,n\}$. We have done this for $k=3,4,5$, and the results can be found in Tables \ref{tab:bubble3_1}, \ref{tab:bubble3_2}, \ref{tab:bubble4_1}, \ref{tab:bubble4_2}, \ref{tab:bubble5_1} and \ref{tab:bubble5_2} in the appendix. These results cannot be extended to all $n$, which is why the rest of the theorems involve lower bounds of $W(k,\mathbb{Z}_n,D/n)$.

The next theorem is the first theorem quantifying how many arithmetic progressions there are in any subset $S$ of $\mathbb{Z}_p$ of cardinality $D=|S|$. The bound holds for any prime $p$.

\begin{theorem}
 \label{thm:density}
Let $p$ be a prime number. A lower bound for the minimum number of arithmetic progressions of length $3$ among all subsets of $\mathbb{Z}_p$ of cardinality $D$,
\[  W(3,\mathbb{Z}_p,D/p) = \min \{ \sum_{\{i,j,k\} \textrm{ A.P. in } \mathbb{Z}_p} x_ix_jx_k : x_i \in \{0,1\} , \sum_{i=0}^{p-1} x_i = D\}, \]
is  
\[
\lambda = \frac{D^3 - (\frac{p+3}{2}) D^2 + (\frac{p+3}{2}-1)D}{p-1}.
\]
A certificate for the lower bound is given by:
\[ 
\begin{array}{rl}
\displaystyle \sum_{\{i,j,k\} \textrm{ A.P. in } \mathbb{Z}_p} X_iX_jX_k - \lambda =& \displaystyle  \sum_{i=0}^{p-1} \sigma_{1,i} X_i  + \sum_{i=0}^{p-1} \sigma_{2,i} X_i + \sigma_{3}(D- \sum_{i=0}^{p-1}X_i^3 ) \\
& \displaystyle  + \sigma_{4}(\sum_{i \neq j}X_i^2X_j - D(D-1)),
\end{array}
\]
where
\[ 
\begin{array}{rl}
\sigma_{1,i} &= \displaystyle  \frac{1}{p-1} \sum_{0<j<k<(p-1)/2} (X_{j+i} - X_{j+k+i} - X_{n-j-k+i} + X_{n-j+i})^2 \\ \\
\sigma_{2,i} &= \displaystyle  \frac{1}{p-1}(DX_i - \sum_{j=0}^{p-1} X_j)^2 \\ \\
\sigma_{3} &= \displaystyle \frac{(D-1)^2}{p-1} \\ \\
\sigma_{4} &= \displaystyle \frac{4D-p+3}{2(p-1)}.
\end{array}
\]
\end{theorem}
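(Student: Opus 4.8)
I would read the displayed identity as a degree--$3$ Positivstellensatz certificate and split the argument into (a) deducing the bound from the identity by a short positivity argument, and (b) verifying the identity itself; essentially all the work is in (b).

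\emph{(a) From the identity to the bound.} Put $F(X)=\sum_{\{i,j,k\}\ \textrm{A.P. in }\mathbb{Z}_p}X_iX_jX_k$, a homogeneous cubic in $\mathbb{R}[X_0,\dots,X_{p-1}]$, so that $W(3,\mathbb{Z}_p,D/p)=\min\{F(x):x\in\{0,1\}^p,\ \sum_i x_i=D\}$. Let $x$ be feasible. Then $x_i^2=x_i$ (so $x_i^3=x_i$ and $x_i\ge 0$) and $\sum_i x_i=D$, whence $\sum_i x_i^3=\sum_i x_i=D$ and $\sum_{i\ne j}x_i^2x_j=\sum_{i\ne j}x_ix_j=(\sum_i x_i)^2-\sum_i x_i^2=D^2-D$, so the $\sigma_3$-- and $\sigma_4$--terms on the right--hand side vanish at $x$. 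Each $\sigma_{1,i}$ and $\sigma_{2,i}$ is $\tfrac1{p-1}$ times a sum of squares of polynomials, hence nonnegative, and $x_i\ge 0$, so $\sum_i\bigl(\sigma_{1,i}(x)+\sigma_{2,i}(x)\bigr)x_i\ge 0$. Granting the identity, $F(x)-\lambda\ge 0$ for every feasible $x$, i.e.\ $W(3,\mathbb{Z}_p,D/p)\ge\lambda$; this is the assertion. (Only the nonnegativity of $\sigma_{1,i},\sigma_{2,i}$ is used; the exact values of $\sigma_3,\sigma_4$ play no role in the inequality and are fixed only by the identity.)

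\emph{(b) The identity.} Both sides have degree $\le 3$, and the construction is invariant under the cyclic shift $X_i\mapsto X_{i+1}$ --- the assignments $i\mapsto\sigma_{1,i},\,i\mapsto\sigma_{2,i}$ are ``centred at $i$'' and summed over all $i$, the polynomials $\sum_i X_i^3$ and $\sum_{i\ne j}X_i^2X_j$ are symmetric, and the family of three--term A.P.'s in $\mathbb{Z}_p$ is shift--stable. Hence it suffices to compare, for one representative of each cyclic orbit of monomials of degree $\le 3$, the coefficients on the two sides: $1$, $X_0$, $X_0^2$, $X_0X_d$, $X_0^3$, $X_0^2X_d$ and $X_0X_aX_b$. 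Concretely: expand $\sigma_{1,i}X_i=\tfrac{X_i}{p-1}\sum_{0<j<k<(p-1)/2}(X_{i+j}-X_{i+j+k}-X_{i-j-k}+X_{i-j})^2$, noting that no monomial of $\sigma_{1,i}$ involves $X_i$, that its ``diagonal'' cross terms $2X_iX_{i+j}X_{i-j}$ and $2X_iX_{i+j+k}X_{i-j-k}$ are exactly the monomials of the A.P.'s with middle term $i$, and that its off--diagonal cross terms $-2X_iX_{i+j}X_{i+j+k}$, $-2X_iX_{i+j}X_{i-j-k}$, $-2X_iX_{i+j+k}X_{i-j}$, $-2X_iX_{i-j-k}X_{i-j}$ are not A.P.\ monomials; expand $\sum_i\sigma_{2,i}X_i=\tfrac1{p-1}\bigl(D^2\sum_i X_i^3-2D(\sum_i X_i)(\sum_i X_i^2)+(\sum_i X_i)^3\bigr)$, from which one reads off its $X_i^3$--coefficient $\tfrac{(D-1)^2}{p-1}$, its $X_i^2X_j$--coefficient $\tfrac{3-2D}{p-1}$, and a uniform contribution $\tfrac6{p-1}$ to \emph{every} squarefree cubic monomial; then add $-\sigma_3\sum_i X_i^3$ and $\sigma_4\sum_{i\ne j}X_i^2X_j$. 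Matching the $X_i^3$-- and $X_i^2X_j$--coefficients to $0$ forces exactly $\sigma_3=\tfrac{(D-1)^2}{p-1}$ and $\sigma_4=\tfrac{4D-p+3}{2(p-1)}$; it then remains to check that the squarefree cubic coefficients come out to $1$ on A.P.'s and $0$ elsewhere, and that the degree--$\le 2$ terms reproduce $-\lambda$ --- the latter using the feasibility relation $\sum_i X_i=D$, which we may substitute freely since only the values of the two sides on the feasible set matter.

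\emph{Main obstacle.} The one genuinely nontrivial point is the squarefree cubic matching in (b): showing that the diagonal and off--diagonal cross terms of the $\sigma_{1,i}$ combine with the uniform contribution $\tfrac6{p-1}$ of $\sum_i\sigma_{2,i}X_i$ to give coefficient $1$ on A.P.\ monomials and $0$ on all others. Since the $\sigma_2$-- and $\sigma_4$--contributions depend only very coarsely on the monomial, this forces the $\sigma_1$--counts to be, for non--A.P.\ triples, independent of the actual differences involved --- an elementary but fiddly combinatorial identity about the number of ways a nonzero residue can be written as $j$, as $-j$, as $j+k$ or as $-(j+k)$ with $0<j<k<(p-1)/2$, and about how an unordered triple of a given difference pattern arises as $\{i,i+j,i+j+k\}$ or as a triple symmetric about some $i$. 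Carrying this count out cleanly --- best organised via the shift symmetry so that only boundedly many cases appear --- is the heart of the proof; given it, the normalisations and the values of $\sigma_3,\sigma_4$ are forced and the positivity step of (a) finishes the argument.
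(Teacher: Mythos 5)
Your overall plan---(a) deduce the bound from the identity by positivity of the certificate terms, and (b) verify the identity by matching coefficients of monomials grouped into affine orbits---is the same strategy the paper uses (the paper does the orbit bookkeeping with an $\omega$-notation that lists the coefficient on each affine orbit of cubic monomials, and it leaves the positivity step to the reader). Part (a) of your plan is correct and fills in something the paper only implies.

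In part (b), however, there is a concrete false step: you assert that the off-diagonal cross terms $-2X_iX_{i+j}X_{i+j+k}$, $-2X_iX_{i+j}X_{i-j-k}$, $-2X_iX_{i+j+k}X_{i-j}$, $-2X_iX_{i-j-k}X_{i-j}$ ``are not A.P.\ monomials.'' They are not A.P.'s \emph{with middle term $i$}, but they can certainly be A.P.'s with a different middle. For instance, $\{i,i+j,i+j+k\}$ is an arithmetic progression exactly when $j+2k\equiv 0\pmod p$ (then $i+j+k$ is the midpoint of $i$ and $i+j$), and such a pair $(j,k)$ lies in the index range once $p\geq 11$: for $p=11$ take $(j,k)=(3,4)$, giving the A.P.\ $\{i,i+3,i+7\}$. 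Likewise $\{i,i+j,i-j-k\}$ is an A.P.\ when $3j+2k\equiv 0$ or $3j+k\equiv 0$. This is precisely the ``elementary but fiddly combinatorial identity'' you flag at the end as the heart of the proof---so you were right to worry about it---but the claim as written in the middle of (b) is wrong, and because it is wrong the bookkeeping that you say ``forces'' the values of $\sigma_3$ and $\sigma_4$ and yields coefficient $1$ on A.P.'s and $0$ elsewhere cannot be carried out as described: the negative cross terms contribute to the A.P.\ orbit as well, and this changes the A.P.\ count, the non-A.P.\ count, and in turn the value of $\sigma_4$ needed to kill the $X_i^2X_j$ coefficient. Put differently, the squarefree-cubic matching is not a routine consequence of ``the diagonal cross terms give A.P.'s and the rest don't''; it requires tracking, for each pair $(j,k)$, which of the six cross terms lands in the A.P.\ orbit and which in the other orbits, exactly as the paper's $\omega$-expansions attempt to do. Until that count is actually carried out and shown to produce constant coefficients on each orbit, the plan has a genuine gap at its central step.

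One smaller remark: the last clause of (b), about ``using the feasibility relation $\sum_i X_i=D$'' to check the degree-$\leq 2$ terms, is unnecessary and slightly off-target. Every term on the right-hand side is either a homogeneous cubic or a constant, so the match of the constant term to $-\lambda$ is a pure polynomial identity, namely $-\lambda=\sigma_3 D-\sigma_4 D(D-1)$; no substitution on the feasible set is needed (nor would it be permissible in a statement asserting a polynomial identity).
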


One can hope to find a sharper bound than the one provided in Theorem \ref{thm:density} that holds for all primes, but how to find this is not trivial. Next we provide sharper bounds for some low primes. The reason we can find nice algebraic bounds for $p \leq 17$ has to do with that the trigonometric functions in Theorem \ref{thm:density2} are on a nice form. We could find results for slightly larger $p$, but it would require more work and the bounds would contain messy combinations of trigonometric functions.

\begin{theorem}
 \label{thm:smallprimes}
Let $p$ be prime and let $W(3,\mathbb{Z}_p,D/p)$ denote the minimum number of arithmetic progressions of length $3$ among all subsets of $\mathbb{Z}_p$ of cardinality $D$. There are algebraic certificates with polynomials up to degree 3 giving the following bounds
\[  W(3,\mathbb{Z}_5,D/5) \geq \frac{D^3 - 3D^2 +2D}{6}, \]
\[ W(3,\mathbb{Z}_7,D/7) \geq \frac{D^3 - 4D^2 +3D}{8}, \]
\[ W(3,\mathbb{Z}_{11},D/11) \geq \frac{\sqrt{5}D^3+(15-12\sqrt{5})D^2+(-15+11\sqrt{5})D}{30}, \]
\[ W(3,\mathbb{Z}_{13},D/13) \geq \frac{21-2\sqrt{3}}{286}D^3 +\frac{28\sqrt{3}-151}{286}D^2+\frac{5-\sqrt{3}}{11}D \]
and
\[ W(3,\mathbb{Z}_{17},D/17) \geq \frac{1}{24}D^3 -\frac{1}{4}D^2+\frac{5}{24}D. \]
\end{theorem}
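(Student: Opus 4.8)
\noindent\emph{Proof plan.} The strategy is to prove all five inequalities uniformly by exhibiting, for each $p\in\{5,7,11,13,17\}$, an explicit Positivstellensatz certificate of exactly the shape used in Theorem~\ref{thm:density}: an identity
\[ \sum_{\{i,j,k\}\textrm{ A.P. in }\mathbb{Z}_p}X_iX_jX_k-\lambda_p(D)=\sum_{i}\sigma_{1,i}X_i+\sum_i\sigma_{2,i}X_i+\sigma_3\Bigl(D-\sum_iX_i^3\Bigr)+\sigma_4\Bigl(\sum_{i\neq j}X_i^2X_j-D(D-1)\Bigr), \]
in which $\lambda_p(D)$ is the claimed cubic polynomial, the multipliers $\sigma_{1,i}$ and $\sigma_{2,i}$ are sums of squares of linear forms in the $X_i$, and $\sigma_3,\sigma_4$ are scalars (possibly depending on $D$). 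Since the two bracketed polynomials lie in the ideal generated by the $X_i^2-X_i$ and by $\sum_iX_i-D$, no sign condition on $\sigma_3,\sigma_4$ is required, and because every monomial involved has degree at most $3$, once the data are written down the identity is a finite coefficient comparison (working modulo that ideal) and nonnegativity on $\{0,1\}^p\cap\{\sum_iX_i=D\}$ then gives $W(3,\mathbb{Z}_p,D/p)\ge\lambda_p(D)$. So the real work is the \emph{construction} of the sharpened $\lambda_p$ together with its multipliers, plus the bookkeeping of the verification.

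For the construction I would rerun the Fourier/semidefinite-duality analysis behind Theorem~\ref{thm:density} (this is the $p\le17$ case of the more general trigonometric bound of Theorem~\ref{thm:density2}), now exploiting that $p$ is small. Writing the number of $3$-APs as the translation-invariant cubic form $\tfrac12\bigl(\tfrac1p\sum_r\hat X(r)^2\overline{\hat X(2r)}-D\bigr)$ and using the full symmetry group $\mathbb{Z}_p\rtimes\mathbb{Z}_p^{*}$, the dual semidefinite program block-diagonalizes along the characters of this group; its optimum is assembled from squares of linear combinations $\sum_j\alpha_jX_{j+i}$ whose coefficient vectors are supported on the corresponding eigenvectors, so the certificate coefficients are algebraic numbers built from $p$-th and $(p-1)$-th roots of unity together with the roots of the small secular equations coming from the remaining free parameters. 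For $p=5$ the bound degenerates entirely: every $3$-subset of $\mathbb{Z}_5$ is an arithmetic progression, so the identity is nothing but $\sum_{i<j<k}X_iX_jX_k=\tfrac16\bigl((\sum_iX_i)^3-3(\sum_iX_i)(\sum_iX_i^2)+2\sum_iX_i^3\bigr)$ evaluated on the slice, with all $\sigma$'s zero. For $p=7$ and $p=17$ the relevant optimum turns out to be rational, giving certificates over $\mathbb{Q}$; for $p=11$ it lands in $\mathbb{Q}(\sqrt5)$ and for $p=13$ in $\mathbb{Q}(\sqrt3)$ — which is precisely why clean closed forms exist in this range and why the corresponding coefficients in the statement carry $\sqrt5$ and $\sqrt3$. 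Reading off $\lambda_p(D)$ from these values reproduces the five polynomials, each of which is tight at $D=p$, where it equals $\binom{p}{2}$, the total number of $3$-APs in $\mathbb{Z}_p$.

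The step I expect to be the main obstacle is verifying, in the two irrational cases, that the quadratic multipliers $\sigma_{1,i},\sigma_{2,i}$ really are sums of squares over $\mathbb{R}$: their Gram matrices have entries in $\mathbb{Q}(\sqrt5)$ (resp.\ $\mathbb{Q}(\sqrt3)$), and one must check positive semidefiniteness after substituting the true real value $\sqrt5=2.236\ldots$ (resp.\ $\sqrt3=1.732\ldots$) rather than its Galois conjugate — equivalently, that one has selected the branch of the secular equation corresponding to the genuine minimum rather than a spurious critical value, since the conjugate choice would produce a larger $\lambda_p$ for which the certificate cannot hold. A secondary, more clerical obstacle is that $D$ is carried as a free parameter, so the coefficient comparison and the SOS checks must be performed uniformly in $D$; after using the $\mathbb{Z}_p\rtimes\mathbb{Z}_p^{*}$ symmetry, however, both reduce to a short list of scalar polynomial identities and inequalities for each prime. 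As a consistency check I would compare $\lambda_p(D)$ with the exact values $W(3,\mathbb{Z}_p,D/p)$ produced by the gray-code search of Section~\ref{sec:graycode} for $p\le17$, which should confirm both validity and sharpness at the extreme densities.
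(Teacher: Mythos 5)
Your plan matches the paper's actual proof strategy: the paper proves the theorem by writing down, for each $p\in\{5,7,11,13,17\}$, an explicit degree-3 Positivstellensatz certificate of the same shape as in Theorem~\ref{thm:density} --- a sum-of-squares multiplier proportional to $(DX_i-\sum_jX_j)^2$ times $X_i$, a second SOS multiplier $\sigma_{4,i}$ built from characters of $\mathbb{Z}_p^*$ (for instance $\sigma_{4,i}=\frac18\bigl(\sum_{j=0}^{16}(-1)^jX_{3^{i+j}+1}\bigr)^2$ for $p=17$), and scalar multipliers of the two cubic slice constraints --- and then verifies each identity by a coefficient comparison in the $\omega$-notation that groups monomials by their $\mathbb{Z}_p\rtimes\mathbb{Z}_p^*$-orbit. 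Your observations about the relevant number fields ($\mathbb{Q}$ for $p=5,7,17$; $\mathbb{Q}(\sqrt5)$ for $p=11$; $\mathbb{Q}(\sqrt3)$ for $p=13$), the conjugate-branch subtlety in the irrational cases, and the sharpness $\lambda_p(p)=\binom p2$ are all correct and agree with what the paper does.

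Two caveats. First, this is a plan rather than a proof: the five explicit certificates constitute the entire content of the paper's proof, and since their precise coefficients are the statement's whole substance you cannot defer them to ``rerun the SDP.'' Second, and more concretely, your $p=5$ claim that the certificate has ``all $\sigma$'s zero'' is wrong. The identity $\sum_{i<j<k}X_iX_jX_k=\binom D3$ does hold on the slice $\{0,1\}^5\cap\{\sum_iX_i=D\}$, but a Positivstellensatz certificate is an identity in $\mathbb{R}[X_0,\dots,X_4]$, not a congruence modulo the constraint ideal, and $\sum_{i<j<k}X_iX_jX_k-\binom D3$ is not the zero polynomial. The paper's $p=5$ certificate takes $\sigma_{1,i}=\frac16\bigl(DX_i-\sum_jX_j\bigr)^2$, $\sigma_2=\frac{(D-1)^2}{6}$, $\sigma_3=\frac{2D-3}{6}$; summing $\sum_i\sigma_{1,i}X_i$ and the two scaled constraint terms recovers exactly $\frac16\bigl((\sum_iX_i)^3-3(\sum_iX_i)(\sum_iX_i^2)+2\sum_iX_i^3\bigr)-\binom D3$, so the elementary-symmetric identity you wrote down is precisely what these nonzero multipliers encode --- not evidence that they can be dropped.
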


In fact, finding bounds sharper than those in Theorem \ref{thm:density} can be done very efficiently for relatively small primes using a simple degree 3 relaxation. As shown in the following theorem we can reformulate a certain relaxation as a linear program:
\begin{theorem}
\label{thm:density2}
Let $r$ be a primitive root of the prime $p$. Let further
\[
V_{ij} = \Big| \Big\{ \{0,1,r^i\} : \{0,1,r^i\}=\{0,r^t,r^{j+t}\} \textrm{ for } t=0,\dots,p-2 \Big\} \Big|
\]
for all $i,j \in \{0,\dots,p-1 \}$,
\[ 
C_{ij} =cos(\frac{2 \pi (i-1)(j-1)}{p-1})
\]
for all $i,j \in \{0,\dots,p-1\}$
\[
u = [u_0,u_1,\dots,u_\frac{p-3}{2},u_\frac{p-1}{2},u_\frac{p-3}{2},\dots,u_1]^T,
\]
\[
u_+ = 1^Tu=u_0+2u_1+\dots+2u_{(p-3)/2}+u_{(p-1)/2}
\]
and
\[ 
v_i = \left\{\begin{array}{rl} 1 & \displaystyle \textrm{ if } r^i=2 \\ 0 & \displaystyle \textrm{ otherwise.} \end{array}\right.
\]
for $i \in \{0,\dots,p-1\}$.

The following optimization problems attain the same optimal value:
\begin{itemize}
\item[(a)]
\[
 \max \{ \lambda : \sum_{\{i,j,k\} \textrm{ A.P. in } \mathbb{Z}_p} X_iX_jX_k - \lambda = S \} 
 \] 
where
\[
\begin{array}{rl}
S=& \displaystyle  \sum_{i=0}^{p-1} \sum_{j=0}^{p-1} (\sum_{k=1}^{p-1}a_{ijk}X_{k})^2 x_i  +b \sum_{i=0}^{p-1} (DX_i - \sum_{j=0}^{p-1} X_j)^2 X_i \\
& \displaystyle + c( \sum_{i=0}^{p-1}X_i^3 -D) + d(\sum_{i,j}X_i^2X_j - D(D-1))
\end{array}
\]
for $a_{ijk},c,d \in \mathbb{R}$ and $b \geq 0$.
\item[(b)]
\[
\max \{ \frac{u_+}{p-1}(D - 1 -\frac{u_0}{u_+}(p-1)   )D(D-1) : Cu \geq 0, Vu=v \}.
\]
\end{itemize}
\end{theorem}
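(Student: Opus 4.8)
The plan is to reduce the semidefinite problem (a) to the linear program (b) in two steps: first symmetrize the certificate using the affine symmetry group of $\mathbb{Z}_p$, and then diagonalize the resulting invariant problem with the discrete Fourier transform of $\mathbb{Z}_{p-1}$.

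First, let $G = \mathbb{Z}_p \rtimes \mathbb{Z}_p^\times$ act on the variables by $X_i \mapsto X_{ai+b}$ for $a \in \mathbb{Z}_p^\times$ and $b \in \mathbb{Z}_p$. An affine substitution $i \mapsto ai+b$ sends a $3$-term A.P. to a $3$-term A.P., so the objective polynomial $\sum_{\{i,j,k\}\text{ A.P.}} X_iX_jX_k$ is $G$-invariant; likewise $\sum_i(DX_i - \sum_j X_j)^2X_i$, $\sum_i X_i^3 - D$ and $\sum_{i,j}X_i^2X_j-D(D-1)$ are $G$-invariant, and the class of terms $\sum_{i,j}(\sum_k a_{ijk}X_k)^2X_i$ is stable under $G$. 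Since the feasible set of (a) is convex and $G$-invariant, averaging any feasible certificate over $G$ produces a $G$-invariant feasible certificate with the same $\lambda$; hence the optimum of (a) is unchanged when we require the certificate to be $G$-invariant.

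Next I would pin down the shape of a $G$-invariant certificate. Translation invariance forces the square part to collapse to $\sum_i\bigl(\sum_{j,k}m_{jk}X_{j+i}X_{k+i}\bigr)X_i$ for a single positive semidefinite matrix $M=(m_{jk})$, and dilation invariance forces $M$ to be invariant under the action of $\mathbb{Z}_p^\times\cong\mathbb{Z}_{p-1}$. In the Fourier basis of $\mathbb{Z}_{p-1}$ such an $M$ is described by a profile vector: the palindromic padding $u=[u_0,u_1,\dots,u_{(p-1)/2},\dots,u_1]$ records that the nontrivial Fourier coefficients occur in conjugate pairs, the cosine matrix $C$ is exactly the associated real Fourier transform, and consequently the eigenvalues of $M$ are the entries of $Cu$, so ``$M\succeq0$'' turns into ``$Cu\geq0$''. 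The remaining equality constraints of (a) — that after expanding $S$ and adding the ideal generators $\sum_iX_i^3-D$ and $\sum_{i,j}X_i^2X_j-D(D-1)$ with coefficients $c,d$ of free sign, the degree-$3$ part agrees with $\sum X_iX_jX_k$ coefficientwise — become, after collecting monomials into $\mathbb{Z}_p^\times$-orbits indexed by the ratio $r^j$, precisely the system $Vu=v$, where $V_{ij}$ is the multiplicity with which $\{0,1,r^i\}$ arises as $\{0,r^t,r^{j+t}\}$ and $v$ singles out the orbit of the honest A.P. $\{0,1,2\}$. Finally, reading off the constant term of $\sum X_iX_jX_k-\lambda=S$ on the symmetrized certificate expresses $\lambda$ through $D$, $u_0$ and $u_+=1^Tu$, giving $\lambda=\frac{u_+}{p-1}\bigl(D-1-\frac{u_0}{u_+}(p-1)\bigr)D(D-1)$. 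Carrying this dictionary in both directions — symmetrizing a certificate of (a) to extract a feasible $u$, and conversely assembling a symmetric certificate from any feasible $u$ — proves that (a) and (b) attain the same optimum.

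The step I expect to be hardest is the bookkeeping in the second part: checking that $G$-averaging preserves feasibility in exactly the form written in (a) — in particular that the $b\geq0$ term, which corresponds to the trivial Fourier mode and explains why $u_0$ is separated from $u_+$, and the free $c,d$ terms both survive the reduction — and establishing the precise correspondence between the coefficient-matching constraints and the combinatorial matrix $V$ with right-hand side $v$. Fixing the normalizations of the Fourier diagonalization so that positive semidefiniteness of $M$ is literally $Cu\geq0$ for the stated $C$, and so that the padded palindromic $u$ is the correct parametrization, is the delicate technical point; once that is in place, the equality of the two optimal values is a direct comparison of two descriptions of the same feasible set and objective.
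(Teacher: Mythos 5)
Your proposal follows essentially the same route as the paper: the paper's proof is split into Lemma~\ref{lem:density1} (symmetrizing the degree-$3$ certificate under the affine group of $\mathbb{Z}_p$ and collapsing the sum-of-squares part to a single structured matrix with equality constraints $Vu=v$) and Lemma~\ref{lem:density2} (observing that the relevant block is a palindromic circulant and diagonalizing it via the discrete Fourier transform of $\mathbb{Z}_{p-1}$, so that positive semidefiniteness becomes $Cu\geq 0$, with the trivial mode eliminated by the optimal choice of the $b$-term, yielding $b=u_+/(p-1)$). Your two steps — affine averaging, then Fourier diagonalization identifying $Cu\geq0$, $Vu=v$, and reading off $\lambda$ from the constant term with $b$ tied to the trivial Fourier mode — are precisely these two lemmas.
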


The following corollary, which follows from Theorem \ref{thm:density} and Theorem \ref{thm:density2} shows that there is room to improve the bounds further.
\begin{corollary}
\label{cor:density}
Let $p$ be prime and denote the optimal value to problem (a) in Theorem \ref{thm:density2} by $\lambda_p(D)$:
\[
\lambda_p(D) = \max \{ \lambda : \sum_{\{i,j,k\} \textrm{ A.P. in } \mathbb{Z}_p} X_iX_jX_k - \lambda = S \} 
 \] 
where
\[
\begin{array}{rl}
S=& \displaystyle  \sum_{i=0}^{p-1} \sum_{j=0}^{p-1} (\sum_{k=1}^{p-1}a_{ijk}X_{k})^2 X_i  +b \sum_{i=0}^{p-1} (DX_i - \sum_{j=0}^{p-1} X_j)^2 X_i \\
& \displaystyle + c( \sum_{i=0}^{p-1}X_i^3 -D) + d(\sum_{i,j}X_i^2X_j - D(D-1))
\end{array}
\]
for $a_{ijk},c,d \in \mathbb{R}$ and $b \geq 0$. For all $p$
\[
\left\lceil \frac{p+3}{4} \right\rceil \leq \min \{ D \in \mathbb{Z}_+ : \lambda_p(D) > 0 \}  \leq \frac{p+3}{2}.
\]
\end{corollary}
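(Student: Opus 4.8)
The plan is to extract both inequalities from quantitative information about the function $D \mapsto \lambda_p(D)$, using Theorem \ref{thm:density} for the upper bound and Theorem \ref{thm:density2} for the lower bound. For the upper bound $\min\{D : \lambda_p(D) > 0\} \le \frac{p+3}{2}$, I would simply plug the explicit formula $\lambda = \frac{D^3 - \frac{p+3}{2}D^2 + (\frac{p+3}{2}-1)D}{p-1}$ from Theorem \ref{thm:density} into the picture: since the certificate of Theorem \ref{thm:density} is of exactly the admissible form appearing in problem (a) (sums of squares times the $X_i$, plus multiples of the ideal generators $\sum X_i^3 - D$ and $\sum_{i\ne j} X_i^2 X_j - D(D-1)$, with $b = \sigma_3 \ge 0$), we get $\lambda_p(D) \ge \lambda$ for all $D$. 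Factoring $\lambda = \frac{D(D-1)(D - (\frac{p+3}{2}-1))}{p-1}$, one sees $\lambda > 0$ as soon as $D > \frac{p+3}{2} - 1$, i.e. for $D \ge \frac{p+3}{2}$ (and since $p$ is odd, $\frac{p+3}{2}$ is an integer, so this is a legitimate value of $D$). Hence $\min\{D : \lambda_p(D) > 0\} \le \frac{p+3}{2}$.

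For the lower bound $\lceil \frac{p+3}{4}\rceil \le \min\{D : \lambda_p(D) > 0\}$, I would work through the linear-programming reformulation (b) of Theorem \ref{thm:density2}, whose optimal value equals $\lambda_p(D)$ and is
\[
\lambda_p(D) = \max\Big\{ \tfrac{u_+}{p-1}\big(D - 1 - \tfrac{u_0}{u_+}(p-1)\big)D(D-1) : Cu \ge 0,\ Vu = v \Big\}.
\]
For $D \in \{0,1\}$ the factor $D(D-1)$ vanishes, so $\lambda_p(D) = 0$ and such $D$ are never the minimizer; assume therefore $D \ge 2$, so that $D(D-1) > 0$ and the sign of the objective is governed by the sign of $D - 1 - \frac{u_0}{u_+}(p-1)$ (note $u_+ > 0$ is forced by the constraint structure, which I would verify). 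Thus $\lambda_p(D) > 0$ is possible only if there is a feasible $u$ with $\frac{u_0}{u_+} < \frac{D-1}{p-1}$; equivalently $\lambda_p(D) \le 0$ whenever $\frac{u_0}{u_+} \ge \frac{D-1}{p-1}$ for every feasible $u$. So the key estimate to establish is a lower bound on $\frac{u_0}{u_+}$ over the feasible region $\{Cu \ge 0,\ Vu = v\}$: I expect that $\frac{u_0}{u_+} \ge \frac14$ for all feasible $u$, which would give $\lambda_p(D) \le 0$ whenever $\frac{D-1}{p-1} \le \frac14$, i.e. whenever $D \le \frac{p+3}{4}$, and hence $\min\{D : \lambda_p(D) > 0\} > \frac{p+3}{4}$, which upon taking integer parts yields $\min\{D : \lambda_p(D) > 0\} \ge \lceil \frac{p+3}{4}\rceil$.

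The main obstacle is proving that $\frac{u_0}{u_+} \ge \frac14$ on the feasible set. Here $C_{ij} = \cos\frac{2\pi(i-1)(j-1)}{p-1}$ is (up to indexing) the DFT matrix of $\mathbb{Z}_{p-1}$, and $u$ is symmetric, so $Cu \ge 0$ says precisely that $u$ has nonnegative Fourier coefficients; writing $u$ as a nonnegative combination of the rows $c_k$ of $C$, the quantity $\frac{u_0}{u_+}$ becomes a weighted average of the ratios $\frac{(c_k)_0}{(c_k)_+}$, and I would bound each of these — with the constraint $Vu = v$ (which pins down the coordinate of $u$ indexed by the position of $2$ in the cyclic subgroup, reflecting the normalization that there is "one" progression $\{0,1,2\}$) ensuring the combination is nontrivial and the denominator does not degenerate. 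Carrying out this Fourier-analytic estimate carefully — in particular handling the case analysis for which $(c_k)_+$ can be small or zero, and confirming the constant is exactly $\frac14$ so that the crossover lands at $D = \frac{p+3}{4}$ — is where the real work lies; the two displayed inequalities then follow immediately by combining it with the bookkeeping above.
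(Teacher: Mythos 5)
Your treatment of the upper bound is correct and coincides with the paper's: the certificate from Theorem \ref{thm:density} is of the form admitted in problem (a), so $\lambda_p(D) \ge \frac{D(D-1)(D-\frac{p+1}{2})}{p-1}$, which is positive precisely for $D \ge \frac{p+3}{2}$.

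For the lower bound you correctly reduce the claim to showing $\frac{u_0}{u_+}\ge\frac14$ on the feasible set, but you do not prove it, and the route you sketch does not work. Writing $u=\sum_k w_k c_k$ with $w_k\ge0$, the quantity $\frac{u_0}{u_+}=\frac{\sum_k w_k(c_k)_0}{\sum_k w_k(c_k)_+}$ is a ratio of sums, not a weighted average of the ratios $\frac{(c_k)_0}{(c_k)_+}$; and in fact those individual ratios are degenerate here, since every non-constant row of $C$ sums to zero (so $(c_k)_+=0$ for $k\ne1$) while the constant row gives the ratio $\frac{1}{p-1}$, far below $\frac14$. So bounding the individual Fourier modes cannot give the estimate. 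The paper's argument is considerably more elementary and uses the structure of $Vu=v$ directly: that constraint forces $u_+ = u_0 + \frac12$ (the coordinates of $u$ not meeting an arithmetic progression sum to zero, and the two that do satisfy $4u_t+2u_s=1$), while positive semidefiniteness of $U$ forces $u_0\ge \max_i u_i$, hence $u_0\ge\frac16$; since $x\mapsto \frac{x}{x+\frac12}$ is increasing, $\frac{u_0}{u_+}\ge\frac{1/6}{1/6+1/2}=\frac14$, and the bound $D>1+\frac14(p-1)$, i.e. $D\ge\lceil\frac{p+3}{4}\rceil$, follows. You would need to supply an argument of this kind to close the gap.
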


\section{Background}
\subsection{Existence of arithmetic progressions}
\label{sec:EAP}
This section provides a literature review on existence type theorems about arithmetic progressions. We start with some basic definitions and discuss many of the major contributions to the field. 

\begin{definition}
For $S \subseteq \mathbb{N}$, let us define the \emph{(upper) density} 
\[
\rho(S) = \displaystyle\lim_{n \rightarrow \infty}\frac{|S\cap\{1,2,\dots,n\}|}{n}.
\]
\end{definition}

As an example, the density of the even positive integers is $\rho(2\mathbb{N}) = 1/2$. Formally we define arithmetic progressions for the integers and for finite groups in the following way:

\begin{definition}
An \emph{arithmetic progression of length $k \in \mathbb{Z}^+$} in the positive integers is given by $a, a+b,a+2b,\dots, a+(k-1)b$, where $a,b \in \mathbb{Z}^+$.
\end{definition}

\begin{definition}
An arithmetic progression of length $k \in \mathbb{Z}^+$ in a finite group $G$ is a set of $k$ distinct elements $\{a, b \cdot a, b\cdot b \cdot a,\dots, b^{k-1} \cdot a \}$, where $a \in G$ and $b \in G \smallsetminus \{0\}$. 
\end{definition}
To clarify, for example $\{1,2,3\}$, $\{1,3,2\}$, $\{2,1,3\}$, $\{2,3,1\}$, $\{3,1,2\}$ and $\{3,2,1\}$ should be considered as the same arithmetic progression as it is the same set of $3$ distinct elements, hence when we write 
\[ \sum_{\{i,j,k\} \textrm{ is an A.P. in } G} \]
only one representative for every arithmetic progression is used.

It is interesting to study arithmetic progressions since it measures how structured a set is. A set is usually either very random in nature or has a structure similar to that of an arithmetic progression. In additive combinatorics one tries to answer questions about the length of the longest arithmetic progressions contained in very unstructured sets, how structured sets are contained in generalizations of arithmetic progressions and many similar questions. In the last century a lot of results in additive combinatorics have been proven using probabilistic methods, Fourier-analytic methods, sum set estimates, graph theory, ergodic theory and algebraic methods. To answer questions about existence of arithmetic progression in subsets of the integers is one of the most difficult questions in additive combinatorics, and the question that has gotten the most attention. We refer to the book by Tao and Vu \cite{TaoVu2006} for an accessible introduction to the subject.

Van der Waerden proved in 1927 \cite{Waerden1927} that for any given positive integers $r$ and $k$ there is a number $N$ such that if the integers $\{1,\dots,N\}$ are colored in $r$ colors, then there exists a monochromatic arithmetic progression of length $k$. This is one of the key results of Ramsey theory. The smallest such $N$ will be denoted $W_r(k)$ and is called the \emph{van der Waerden number}. Already for $r=2$ we have a huge gap between the lower bound  
\[
W_2(k) \geq p2^p
\]
 for primes by Berlekamp \cite{Berlekamp1968}  and the upper bound 
 \[
 W_2(k) \leq 2^{2^{2^{2^{2^{k+9}}}}}
 \]
  by Gowers \cite{Gowers2001}. Finding bounds for van der Waerden-like numbers is still an active area of research, recent contributions include \cite{Landman2005, Graham2006, Ahmed2009, Ahmed2013}. 

Results about arithmetic progressions in fixed-density sets implies results about monochromatic arithmetic progressions in colorings and are thus more general. The first non-trivial result about existence of arithmetic progressions in sets is due to Roth in 1953. The proof is based on the Hardy-Littlewood circle method, and because of this result Roth was awarded the fields medal in 1958.
\begin{theorem}[Roth's Theorem \cite{Roth1953}]
If $S \subseteq \mathbb{N}$ and $\rho(S)>0$, then it follows that we can find arithmetic progressions in $S$ of length $3$.  
\end{theorem}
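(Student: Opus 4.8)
The plan is to prove Roth's theorem by the Hardy--Littlewood circle method in its finitary, Fourier-analytic form, via a density-increment argument; I note in passing that the quantitative $\mathbb{Z}_p$ bounds developed in this paper do not suffice for this, since (cf.\ Corollary \ref{cor:density}) they only become positive once the density exceeds $\tfrac12$.

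First I would reduce from $\mathbb{N}$ to a cyclic group. Since $\rho(S)=\delta>0$, there are arbitrarily large $n$ with $|S\cap\{1,\dots,n\}|\ge\tfrac{\delta}{2}n$; I would fix such an $n$, set $A=S\cap\{1,\dots,n\}$, and pick a prime $N$ with $2n\le N\le 4n$ (Bertrand's postulate). Embedding $\{1,\dots,n\}$ into $\mathbb{Z}_N$ as the residues $1,\dots,n$, any $a,b,c\in\{1,\dots,n\}$ with $a+c\equiv 2b\pmod N$ must satisfy $a+c=2b$ as integers, since $|a+c-2b|<2n\le N$; hence a nontrivial $3$-term progression of $A$ found inside $\mathbb{Z}_N$ is automatically a genuine one in $\mathbb{N}$. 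Writing $\alpha=|A|/N\ge\delta/8$, it remains to show that $A\subseteq\mathbb{Z}_N$ contains a nontrivial $3$-AP once $N$ is large in terms of $\delta$.

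Next comes the Fourier count. With the normalization $\widehat{f}(r)=\frac1N\sum_{x\in\mathbb{Z}_N}f(x)e^{-2\pi i rx/N}$, the normalized progression count
\[
\Lambda(A)=\frac{1}{N^2}\,\#\bigl\{(x,d)\in\mathbb{Z}_N^2: x,\,x+d,\,x+2d\in A\bigr\}
\]
satisfies $\Lambda(A)=\sum_{r\in\mathbb{Z}_N}\widehat{1_A}(r)^2\,\widehat{1_A}(-2r)$, the $r=0$ term being the main term $\alpha^3$. If $A$ had no nontrivial $3$-AP, then $\Lambda(A)=\alpha/N$, so $\sum_{r\ne 0}|\widehat{1_A}(r)|^2|\widehat{1_A}(-2r)|\ge\alpha^3-\alpha/N\ge\alpha^3/2$ for $N$ large; together with Parseval $\sum_r|\widehat{1_A}(r)|^2=\alpha$ this forces a single nonzero frequency $r$ with $|\widehat{1_A}(r)|\ge\alpha^2/4$. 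I would then convert this large exponential sum into a density increment: by Dirichlet's theorem the phase $x\mapsto e^{2\pi i rx/N}$ is nearly constant on each block of a partition of $\mathbb{Z}_N$ into genuine arithmetic progressions of length $\asymp N^{1/2}$, so pigeonholing over the blocks produces an arithmetic progression $P$ with $|P|\to\infty$ on which $A$ has relative density at least $\alpha+c\alpha^2$, $c>0$ absolute. Affinely identifying $P$ with $\{1,\dots,N_1\}$ ($N_1\gtrsim N^{1/2}$) returns the original situation with increased density, and iterating raises the density by $\gtrsim\alpha^2$ per round while it stays $\le 1$; this forces termination after $O(1/\alpha)$ rounds, which is possible only if the initial $N$ was below roughly $\exp\exp(C/\alpha)$. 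Choosing $n$ (hence $N$) large enough in terms of $\delta$ then contradicts the hypothesis, so $A$, and therefore $S$, contains a $3$-term arithmetic progression.

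The hardest step will be the density increment: turning a large value of $|\widehat{1_A}(r)|$ into an \emph{honest} long arithmetic progression (not merely a Bohr set) that genuinely carries a density jump, and controlling the length loss $N\mapsto N_1$ tightly enough that the iteration survives $O(1/\alpha)$ rounds. The other point demanding care is the passage between the interval and cyclic models, which I handle above by embedding $\{1,\dots,n\}$ into $\mathbb{Z}_N$ with $N\ge 2n$ so that no spurious wrap-around progressions arise; with this in place the remainder is bookkeeping of absolute constants.
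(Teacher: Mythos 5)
The paper itself offers no proof of this statement---it is quoted from Roth \cite{Roth1953} with only the remark that the original argument uses the Hardy--Littlewood circle method---so your proposal has to stand on its own. You have chosen exactly that classical route (Fourier count in a cyclic group plus a density increment), and most of the individual steps you describe are the standard ones, but as written the iteration contains a genuine gap. The increment you extract is measured against the \emph{cyclic} density $\alpha=|A|/N\geq\delta/8$, because you insist on embedding $\{1,\dots,n\}$ into $\mathbb{Z}_N$ with $N\in[2n,4n]$ to rule out wrap-around. The progression $P$ you obtain therefore carries relative density at least $\alpha+c\alpha^2$, which is \emph{smaller} than the interval density $\delta/2$ you started from. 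At the beginning of the next round you must again embed the rescaled interval into a fresh prime cyclic group of at least twice its length, losing another constant factor (at best $\approx 2$, in your setup up to $4$) in density, while the gain per round is only $c\alpha^2\ll\alpha$. Consequently the quantity you are iterating decreases geometrically, never approaches $1$, the claimed termination after $O(1/\alpha)$ rounds never occurs, and no contradiction is reached: the argument, as bookkept, simply does not close.

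The missing idea is precisely the device that makes the standard proof work: the increment must be measured against the \emph{interval} density $\delta$, so that the embedding loss is paid only once inside each round and never accumulates. Concretely, either work with the balanced function $f=1_A-\delta\,1_{[n]}$ viewed in $\mathbb{Z}_N$ (so that $\widehat{f}(0)=0$, the main term of the three-term count is of order $\delta^3$, the no-progression hypothesis forces $\sup_{r\neq0}|\widehat{f}(r)|\geq c\delta^{2}$, and the Dirichlet--pigeonhole step then yields a subprogression of $[n]$ on which $A$ has density at least $\delta+c\delta^{2}$), or use the classical middle-third trick with $N\approx n$: either one of the three thirds of $[n]$ already gives a density increment on a progression of length $n/3$, or all thirds have density close to $\delta$ and one counts progressions whose middle term lies in the central third, which cannot wrap around. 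With either correction, the remainder of your outline (Dirichlet approximation, partition into progressions of length about $\sqrt{N}$, iteration with length loss $N\mapsto c\delta^{2}\sqrt{N}$ and a double-exponential threshold in $1/\delta$) is the standard argument and goes through. As a minor aside, your parenthetical reading of Corollary \ref{cor:density} is slightly off: it places the positivity threshold of the degree-3 certificates between roughly $p/4$ and $p/2$, not necessarily above density $1/2$; only the explicit certificate of Theorem \ref{thm:density} has threshold near $1/2$.
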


In 1969 Szemer\'edi strengthened Roth result to existence of arithmetic progressions of length 4 \cite{Szemeredi1969} and in 1975 he generalized it to arithmetic progressions of arbitrary length \cite{Szemeredi1975}. In order to prove the result Szemer\'edi developed the famous regularity lemma, which states that every large enough graph can be partitioned into subgraphs of about the same size so that the edges between the different subgraphs behave fairly randomly. For these and other combinatorial results he was awarded the Abel prize in 2012.
\begin{theorem}[Szemer\'edi's Theorem \cite{Szemeredi1975}] 
If $S \subseteq \mathbb{N}$ and $\rho(S)>0$, then it follows that we can find arbitrarily long arithmetic progressions in $S$. 
\end{theorem}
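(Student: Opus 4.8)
The plan is to reduce Szemer\'edi's theorem to a finitary density statement in $\{1,\dots,N\}$ and then to deduce that statement from the hypergraph removal lemma, which in turn rests on the hypergraph regularity method foreshadowed by Szemer\'edi's regularity lemma. First I would show that it suffices to prove: for every integer $k \geq 1$ and every $\delta > 0$ there is $N_0(k,\delta)$ such that every $A \subseteq \{1,\dots,N\}$ with $|A| \geq \delta N$ and $N \geq N_0(k,\delta)$ contains a nontrivial $k$-term arithmetic progression. Granting this, suppose $S \subseteq \mathbb{N}$ has $\rho(S) = \delta > 0$. By the definition of upper density there are arbitrarily large $n$ with $|S \cap \{1,\dots,n\}| \geq (\delta/2)\,n$; a standard pigeonhole/averaging argument over subintervals then produces, for infinitely many lengths $N \geq N_0(k,\delta/4)$, an interval $\{a+1,\dots,a+N\}$ containing at least $(\delta/4)N$ elements of $S$, and translating this interval to $\{1,\dots,N\}$ yields a $k$-term A.P. inside $S$. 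Letting $k \to \infty$ gives arbitrarily long progressions.

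To prove the finitary statement for a fixed $k$, I would use the Ruzsa--Szemer\'edi / Frankl--R\"odl encoding of $k$-term A.P.'s as simplices in an auxiliary $(k-1)$-uniform hypergraph. Working in $\mathbb{Z}_M$ for a suitable $M = \Theta(N)$ (to suppress wrap-around), one constructs a $k$-partite $(k-1)$-uniform hypergraph $H$ whose vertex classes are copies of $\mathbb{Z}_M$ and whose edges are chosen so that a copy of the complete $(k-1)$-uniform hypergraph on $k$ vertices (a ``simplex'') corresponds precisely to a $k$-term A.P. all of whose terms lie in $A$. Every element of $A$, together with a free choice of base configuration, yields a degenerate (common-difference-zero) simplex, so $H$ contains $\Omega_{k,\delta}(M^{k-1})$ simplices, and since each edge of $H$ lies in only boundedly many of them, destroying all the degenerate simplices forces the deletion of $\Omega_{k,\delta}(M^{k-1})$ edges. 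On the other hand, if $A$ contained no nontrivial $k$-term A.P. then \emph{every} simplex of $H$ would be degenerate, so $H$ would have only $O(M^{k-1}) = o(M^{k})$ simplices in total; the hypergraph removal lemma then guarantees that $H$ can be made simplex-free by deleting at most $\varepsilon M^{k-1}$ edges for any prescribed $\varepsilon > 0$. Choosing $\varepsilon$ smaller than the implied constant above and $M$ large contradicts the lower bound, so $A$ must contain a nontrivial $k$-term A.P. once $N \geq N_0(k,\delta)$.

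The hard part is the hypergraph removal lemma itself, equivalently the hypergraph regularity method: one must formulate a workable notion of regular partition for $(k-1)$-uniform hypergraphs, prove the corresponding regularity lemma by an energy/index-increment argument generalizing the proof of Szemer\'edi's graph regularity lemma, and prove the matching counting lemma ensuring that regular configurations contain roughly the expected number of simplices; everything else in the argument above is bookkeeping. Two well-known alternatives package the single hard step differently — Furstenberg's ergodic-theoretic proof replaces it by the correspondence principle together with the structure theorem for measure-preserving systems and a multiple-recurrence argument handling compact and weakly mixing extensions separately, while Gowers' proof replaces it by a density-increment iteration driven by the inverse theorem for the Gowers $U^{k-1}$ norm. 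I would regard the hypergraph regularity/removal route as the cleanest way to isolate the obstacle, since it keeps the combinatorial reduction completely elementary and concentrates all the difficulty in one self-contained analytic lemma.
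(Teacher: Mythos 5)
The paper does not prove this statement at all: Szemer\'edi's theorem appears in the background section (Section \ref{sec:EAP}) purely as a cited result \cite{Szemeredi1975}, with the various known proofs (Szemer\'edi's original regularity-lemma argument, Furstenberg's ergodic proof, Gowers' Fourier-analytic proof, and the hypergraph-regularity proofs of Gowers and of Nagle--R\"odl--Schacht--Skokan) only surveyed. So there is no in-paper proof to compare against; what you have written corresponds to the hypergraph-removal route that the paper attributes to \cite{Gowers2007, Rodl2004, Rodl2006, Nagle2006, Rodl2007_1, Rodl2007_2}, not to the original 1975 combinatorial argument the statement is cited from.

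As a proof attempt, your reductions are sketched correctly: the passage from positive upper density to the finitary statement on $\{1,\dots,N\}$ by averaging over subintervals is standard and fine, and the Ruzsa--Szemer\'edi/Frankl--R\"odl encoding of $k$-term progressions as simplices in a $k$-partite $(k-1)$-uniform hypergraph, with the count of edge-almost-disjoint degenerate simplices played off against the removal lemma, is the correct bookkeeping. But there is a genuine gap, and you name it yourself: the hypergraph regularity lemma together with the matching counting lemma (equivalently, the hypergraph removal lemma) is not proved, and it is not a routine generalization of the graph case --- the difficulty of formulating the right notion of regularity for $(k-1)$-uniform hypergraphs and proving a counting lemma compatible with it is precisely where the entire content of Szemer\'edi's theorem resides, and its resolution occupies several substantial papers. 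So what you have is a correct and clearly organized reduction of Szemer\'edi's theorem to the removal lemma, which is a legitimate modern derivation if that lemma is invoked as a citation, but it is not a self-contained proof; every step you actually carry out is elementary, and the one step you defer is the theorem's real substance.
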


\begin{definition}
We denote by $N(k,\delta)$ the smallest positive integer such that for any $M \geq N(k,\delta)$ it holds that any subset of $\{1,\dots,M\}$ of cardinality $\delta M$ contains an arithmetic progression of length $k$ for $0 < \delta \leq 1$. Similarly we define $\delta(N,k)$ to be the smallest density such that any subset of $\{1,\dots,M \}$ of cardinality $\delta(N,k) M$ contains an arithmetic progression of length $k$ for any $M \geq N$.
\end{definition}
Szemer\'edi's theorem implies the existence of an $N(k,\delta)$ for any $k$ and $\delta$ but fails to provide any good bounds for this number. Another proof of Szemer\'edi's theorem using ergodic methods was mainly due to Furstenberg in 1982 \cite{Furstenberg1982}. The ergodic methods makes the proof easier, and they have implied several other results in additive combinatorics including a multi-dimensional version and a polynomial version of Szemer\'edi's theorem, and the related density Hales-Jewett theorem. Tao gave a quantitative version based on ergodic theory in 2006 \cite{Tao2006}, but no effective bound for $N(k,\delta)$ has been found using ergodic methods.

Even though Roth's theorem is old, there has been more recent progress on improving the quantitative bounds $N(3,\delta)$. Most results are formulated bounding $\delta(N,3)$ instead of $N(3,\delta)$, and so we follow this notation. Roth's original proof showed that there is a constant $C$ such that $\delta(N,3) \leq \frac{C}{\log  \log N}$. This was improved by Heath-Brown and Szemer\'edi \cite{Heath1987, Szemeredi1990} showing that there exist constants $C_1$ and $C_2$ such that $\delta(N,3) \leq \frac{C_1}{(\log N)^{C_2}}$ and later by Bourgain \cite{Bourgain1999} showing that $\delta(N,3) \leq C \left( \frac{ \log \log N}{\log N} \right)^{1/2}$ for some constant $C$. More recently Bourgain \cite{Bourgain2008} improved the bound to $\delta(N,3) \leq  \frac{C( \log \log N)^2}{(\log N)^{2/3}}$, then Sanders \cite{Sanders2012} improved it to $\delta(N,3) \leq  \frac{C}{(\log N)^{3/4-o(1)}}$ and then Sanders \cite{Sanders2011} improved it further to $\delta(N,3) \leq  \frac{C}{(\log N)^{1+o(1)}}$ for some large enough constant $C$. 

Versions of Roth's theorem for other groups have also been of interest lately. The result was generalized to hold for arbitrary abelian groups of odd order by Meshulam in 1995 \cite{Meshulam1995}. A way to generalize Roth's theorem is through generalizing the (triangle) removal lemma, which is related to Szemer\'edi's regulaity lemma. Green generalized the removal lemma to abelian groups \cite{Green2005_2}, and Kr\'al, Serra and Vela generalized it to hold for all finite groups, with a version of Roth's theorem for all finite groups as a consequence \cite{Kral2009}:
\begin{theorem}
Let $G$ be a finite group of odd order $N$ and $S$ a subset of its elements. If the number of solutions of the equation $x \cdot z = y^2$ with $x,y,z \in S$ is $o(N^2)$, then the size of $S$ is $o(N)$. 
\end{theorem}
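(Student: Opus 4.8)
The plan is to establish the quantitative form of the statement: for every $\varepsilon>0$ there is $\delta>0$ such that for every finite group $G$ of odd order $N$ and every $S\subseteq G$, if the number of triples $(x,y,z)\in S^3$ with $xz=y^2$ is at most $\delta N^2$, then $|S|\le \varepsilon N$. This is a contrapositive density statement of exactly the kind handled by a removal lemma, so I would run the standard reduction of a Roth-type counting statement to the triangle removal lemma for graphs (the abelian case $x+z=2y$, i.e. the usual three-term progression, is Green's argument). The one spot where oddness of $N$ is indispensable is that the squaring map $g\mapsto g^2$ is then a bijection of $G$; write $g^{1/2}$ for the unique element with $(g^{1/2})^2=g$.

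First I would build a tripartite auxiliary graph $\Gamma$ on $V_1\sqcup V_2\sqcup V_3$, each $V_i$ a copy of $G$: join $a\in V_1$ to $b\in V_2$ when $a^{-1}b\in S$; join $b\in V_2$ to $c\in V_3$ when $b^{-1}c\in S$; and join $a\in V_1$ to $c\in V_3$ when $(a^{-1}c)^{1/2}\in S$. A triangle $\{a,b,c\}$ of $\Gamma$ records $x=a^{-1}b$, $z=b^{-1}c$ and $y=(a^{-1}c)^{1/2}$ in $S$, and from $a^{-1}c=(a^{-1}b)(b^{-1}c)$ one gets $xz=y^2$; conversely each solution $(x,y,z)\in S^3$ of $xz=y^2$ together with any $a\in G$ gives the triangle with $b=ax$, $c=axz=ay^2$. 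Hence the number of triangles of $\Gamma$ equals $N$ times the number of solutions of $xz=y^2$ in $S$, so under the hypothesis $\Gamma$ has $o(N^3)$ triangles, which is $o(n^3)$ for $n=|V(\Gamma)|=3N$.

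Next I would apply the triangle removal lemma: since $\Gamma$ has $o(n^3)$ triangles, one can delete $o(n^2)=o(N^2)$ edges and make $\Gamma$ triangle-free. To force many deletions I would point to the diagonal triangles $(a,as,as^2)$ with $a\in G$, $s\in S$; these are genuine triangles of $\Gamma$ because $a^{-1}(as)=s$, $(as)^{-1}(as^2)=s$, and $(a^{-1}\cdot as^2)^{1/2}=(s^2)^{1/2}=s$, all in $S$. There are $|S|\cdot N$ of them, and comparing the ordered endpoint-pairs on each of the three sides (again using injectivity of squaring on the $V_1V_3$ side) shows that no edge of $\Gamma$ lies in two of them. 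Pairwise edge-disjoint triangles need pairwise distinct deleted edges, so at least $|S|\cdot N$ edges are removed; combined with the $o(N^2)$ bound this gives $|S|\cdot N=o(N^2)$, i.e. $|S|=o(N)$, after absorbing the constant coming from $n=3N$.

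The reduction itself is routine; the real weight sits inside the triangle removal lemma, all of whose known proofs pass through Szemer\'edi's regularity lemma (or a hypergraph/arithmetic counterpart) and hence give only a tower-type dependence of $\delta$ on $\varepsilon$ — precisely the inefficiency that makes bounds of $N(3,\delta)$-type so poor, and that the algebraic certificates of this paper aim to avoid in the cyclic setting. An alternative, and the route actually taken by Kr\'al, Serra and Vela, is to replace the graph removal lemma by a removal lemma stated directly for solution sets of a fixed equation in a group: the bookkeeping migrates into that lemma, but the logical skeleton — encode solutions as configurations, remove a small set to kill them, and contradict the abundance of diagonal configurations — is unchanged.
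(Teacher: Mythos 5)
Your argument is correct, but note that the paper itself contains no proof of this statement: it is quoted verbatim from Kr\'al, Serra and Vena \cite{Kral2009} as background, so the comparison is with their route rather than with anything proved here. What you give is the direct graph-theoretic reduction (Green's argument for $x+z=2y$, nonabelianized): the tripartite graph with parts indexed by $G$, the bijection between triangles and pairs (solution, $a$) showing the triangle count is exactly $N$ times the solution count, the triangle removal lemma, and the $|S|\cdot N$ pairwise edge-disjoint ``diagonal'' triangles $(a,as,as^2)$ forcing at least $|S|\cdot N$ edge deletions. All steps check out; in particular you correctly isolate the only use of odd order, namely that $g\mapsto g^2$ is a bijection (surjective since $g=(g^{(m+1)/2})^2$ when $g$ has odd order $m$), which is what makes the $V_1V_3$ adjacency well defined and the edge-disjointness argument on that side go through, and the constants from $n=3N$ are harmless as you say. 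Kr\'al--Serra--Vena instead prove a removal lemma for equations over (possibly non-abelian) groups -- itself established combinatorially via graph removal, avoiding Fourier analysis -- and then deduce the statement by observing that killing all trivial solutions $(s,s,s)$ requires deleting every element of $S$ from one of the three copies, so $|S|=o(N)$. The two derivations are logically equivalent in strength and both inherit the tower-type dependence of the regularity method, exactly as you note; your version simply keeps the bookkeeping at the level of the auxiliary graph rather than packaging it into a group removal lemma, at the cost of not yielding the more general removal statement that their approach provides.
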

To get the result to also hold for non-abelian groups they had to avoid using Fourier analysis. Other results in additive combinatorics for non-abelian groups are the results by Bergelson and Hindman \cite{Bergelson1992}, Gowers \cite{Gowers2008}, Tao \cite{Tao2008, Tao2010, Tao2013}, Sanders \cite{Sanders2010} and Solymosi \cite{Solymosi2013}. The results have been established long after the commutative counterparts, and often with much weaker bounds.

Let $a \uparrow b$ denote $a^b$, $a \uparrow b \uparrow c = a \uparrow (b \uparrow c )= a^{(b^c)}$, and so on. In 1998 \cite{Gowers1998} Gowers showed that there exists an absolute constant $C$ such that $\delta(N,4) \leq  (\log \log N)^{-C}$, which was a major improvement on the previous best bound. The bound was improved by Green and Tao in 2009 \cite{Green2009} to $\delta(N,4) \leq  e^{-C\sqrt{\log \log N}}$. In 2001 Gowers provided the first effective upper bound for $N(k,\delta)$ for $k>3$ using the Fourier analytic machinery he had developed and been awarded the fields medal for in 1998.
\begin{theorem}[\cite{Gowers2001}, Theorem 18.2]
\label{thm:Gowers}
Let $k$ be a positive integer and let $0 < \delta \leq 1/2$. Then
\[ N(k,\delta) = 2 \uparrow 2 \uparrow \delta^{-1} \uparrow2 \uparrow 2 \uparrow k+9. \]
\end{theorem}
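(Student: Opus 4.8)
The plan is to reproduce the structure of Gowers' proof, a density increment argument driven by the uniformity norms that now carry his name, and to track constants carefully enough to land on the stated tower. One first reduces the problem on $\{1,\dots,M\}$ to a problem on $\mathbb{Z}_N$ with $N$ prime and only slightly larger than $M$; this costs a constant factor in the density. For $f\colon \mathbb{Z}_N \to \mathbb{C}$ define the Gowers norm by $\|f\|_{U^d}^{2^d} = \mathbb{E}_{x,h_1,\dots,h_d}\prod_{\omega\in\{0,1\}^d}\mathcal{C}^{|\omega|}f(x+\omega\cdot h)$, where $\mathcal{C}$ is complex conjugation. The first step, the \emph{generalized von Neumann theorem}, is to show by $k-2$ successive applications of the Cauchy--Schwarz inequality that the number of $k$-term progressions in a set $A$ of density $\delta$ is $\delta^k N^2 + O(\|1_A-\delta\|_{U^{k-1}})N^2$; hence if $\|1_A-\delta\|_{U^{k-1}}\le\varepsilon(\delta)$ then $A$ contains at least $\tfrac12\delta^k N^2$ progressions, in particular a nontrivial one, and we are done.

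The second step handles the alternative, that $\|1_A-\delta\|_{U^{k-1}}$ is large, and must extract usable structure. For $k=3$ largeness of the $U^2$ norm directly yields correlation of $1_A$ with a linear exponential $x\mapsto e^{2\pi i\xi x/N}$ and then, by pigeonholing on a Bohr set or long arithmetic progression, a density increment (this is essentially Roth's argument and serves as the base case of an induction on $k$). For $k\ge4$ Gowers proves a weaker \emph{local} inverse statement: $\mathbb{Z}_N$ decomposes into arithmetic progressions $P$ of length about $N^{c(k,\delta)}$ on a positive proportion of which $1_A$ correlates with a genuine degree $k-2$ polynomial phase. The derivation relies on (i) an iterated Cauchy--Schwarz turning largeness of $\|1_A\|_{U^{k-1}}$ into the statement that for many $h$ the multiplicative derivative $x\mapsto 1_A(x)\overline{1_A(x+h)}$ has large $U^{k-2}$ norm, whence by the inductive hypothesis a correlation with a degree $k-3$ phase $\phi_h$; (ii) showing the $\phi_h$ vary additively in $h$ on a large set, so that by the Balog--Szemer\'edi--Gowers theorem and Freiman's theorem on set addition the $\phi_h$ are locally consistent with the derivative of a single degree $k-2$ phase; and (iii) a quantitative Weyl-sum equidistribution argument passing from a polynomial phase on $\mathbb{Z}_N$ to linear behaviour on subprogressions.

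The third step is the density increment itself: on a subprogression carrying a polynomial phase $\psi$, partition further into subprogressions on which $\psi$ is nearly constant; on one of these $A$ has density at least $\delta + c'(k,\delta)$ with $c'(k,\delta)$ polynomial in $\delta$. Rescaling identifies this subprogression with $\{1,\dots,M'\}$ and the argument iterates. Since the density cannot exceed $1$ and each step enlarges the effective gap favourably, after $O(\delta^{-O(1)})$ iterations either a progression has been found or a contradiction results; at each iteration $M$ is replaced by roughly $M^{c(k,\delta)}$, so unwinding the recursion $M_{j+1}\approx M_j^{c}$ over $O(\delta^{-O(1)})$ steps builds a tower of exponentials whose height is governed by $\delta^{-1}$ and whose parameters depend on $k$ through the exponents coming from BSG, Freiman and the Weyl estimates. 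Optimising all these constants yields precisely $N(k,\delta) = 2\uparrow 2\uparrow \delta^{-1}\uparrow 2\uparrow 2\uparrow k+9$.

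The main obstacle is the second step: the quantitative inverse theorem for the $U^{k-1}$ norm, and specifically making effective the claim that the derivatives $\phi_h$ have additive structure. This is where the Balog--Szemer\'edi--Gowers theorem and the Ruzsa-sharpened Freiman theorem enter, and it is the source of the \emph{tower-type}, rather than merely exponential, dependence on $\delta$. The induction on $k$ (Roth as base case) organises the whole proof, but each inductive step degrades the bounds by one further level of the tower, which is exactly why the final expression carries nested $2\uparrow2\uparrow$ factors around both $\delta^{-1}$ and $k$.
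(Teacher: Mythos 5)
The paper does not prove this statement at all: it is quoted from Gowers (Theorem 18.2 of \cite{Gowers2001}) and used as a black box in Corollary \ref{cor:limit}, so there is no proof in the paper to compare yours against. Your sketch does follow the genuine architecture of Gowers' argument (transfer to $\mathbb{Z}_N$, the generalized von Neumann theorem via iterated Cauchy--Schwarz, a local inverse-type statement for the $U^{k-1}$ norm obtained through Balog--Szemer\'edi--Gowers, Freiman and Weyl equidistribution, and a density increment on subprogressions, with Roth's argument as the $k=3$ base case), so as an account of where the tower-type bound comes from it is faithful.

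As a proof, however, it has a genuine gap and in fact targets the wrong statement. First, all of the quantitative content is hidden in the phrase that ``optimising all these constants yields precisely'' the stated tower; none of the bookkeeping that actually produces the exponents in the inverse theorem, the size of the density increment, the number of iterations, or the height of the resulting tower is carried out, and that bookkeeping is the entire substance of the quantitative theorem --- without it one cannot get the specific expression $2 \uparrow 2 \uparrow \delta^{-1} \uparrow 2 \uparrow 2 \uparrow (k+9)$ rather than some unspecified tower. Second, and more fundamentally, a density-increment argument can only ever yield an upper bound $N(k,\delta) \leq 2 \uparrow 2 \uparrow \delta^{-1} \uparrow 2 \uparrow 2 \uparrow (k+9)$: it shows that every $N$ at least this size forces a $k$-term progression and says nothing about minimality of $N(k,\delta)$. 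The equality asserted in the statement as transcribed here is false --- the lower bounds of Behrend, Rankin and O'Bryant recalled in Section \ref{sec:EAP} are far below tower type --- and Gowers' Theorem 18.2 is indeed only the upper bound (any subset of $\{1,\dots,N\}$ of density $\delta$ contains a $k$-term progression once $N$ exceeds the tower). You should have flagged this and stated what your argument can deliver, namely the inequality; even completed in full detail, your proposed proof could not establish the claimed equality.
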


Before stating our next theorem, let us state what (Gower's version of) Szemer\'edi's theorem implies about $W(k, [n],\delta)$.
\begin{corollary}
\label{cor:limit}
It follows by Theorem \ref{thm:Gowers} that
\[
\lim_{n \rightarrow \infty} \min \{ \delta \in \mathbb{R}_+ : W(k,[n],\delta) > 0 \} = 0.
\]
Conversely, if there are lower bounds $\lambda(k,[n],\delta) \leq W(k,[n],\delta)$ with the additional property that
\[
\lim_{n \rightarrow \infty} \min \{ \delta \in \mathbb{R}_+ : \lambda(k,[n],\delta) > 0 \} = 0
\]
holds, then Szemer\'edi's theorem would follow.
\end{corollary}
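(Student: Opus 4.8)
The plan is to deduce the whole statement from Theorem~\ref{thm:Gowers} together with two elementary observations about the extremal function $W$, pushing the converse direction through $W$ rather than through $\lambda$. Throughout I read $\min\{\delta\in\mathbb{R}_+ : W(k,[n],\delta)>0\}$ as the smallest density $D/n$ with $D\in\{0,1,\dots,n\}$ for which $W(k,[n],D/n)>0$, and analogously for $\lambda$; write $m_n$ and $\mu_n$ for these two numbers, with the convention $\min\emptyset=+\infty$. The two observations are: (i) for $n\ge k$ the set $[n]$ contains the progression $\{1,2,\dots,k\}$, so $W(k,[n],1)>0$ and $m_n$ is finite; and (ii) if every $D$-element subset of $[n]$ contains an arithmetic progression of length $k$ then so does every $D'$-element subset with $D'\ge D$, because a larger set contains a smaller one, and hence $W(k,[n],\delta)>0$ for every achievable density $\delta\ge m_n$. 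No analogue of (ii) holds for the auxiliary lower bound $\lambda$, which is exactly why the hypothesis $\lambda\le W$ is needed in the second half: it lets one transfer positivity from $\lambda$ to $W$ and only then apply~(ii).

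For the first limit I would fix $\delta>0$. By Theorem~\ref{thm:Gowers} (i.e.\ the finitary form of Szemer\'edi's theorem, with an explicit and in particular finite bound) there is an $N$ such that for every $n\ge N$ each subset of $[n]$ of cardinality at least $\delta n$ contains an arithmetic progression of length $k$. Applying this with $D=\lceil \delta n\rceil$ shows $W(k,[n],D/n)>0$, so $m_n\le D/n\le\delta+1/n$ for all large $n$. Hence $\limsup_{n\to\infty}m_n\le\delta$, and since $\delta>0$ was arbitrary while $m_n\ge 0$, we get $\lim_{n\to\infty}m_n=0$, which is the first assertion.

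For the converse I would assume $\lambda(k,[n],\delta)\le W(k,[n],\delta)$ and $\lim_{n\to\infty}\mu_n=0$, take $S\subseteq\mathbb{N}$ with $\rho(S)=\rho>0$, fix $k$, and choose some $\rho'\in(0,\rho)$. By the definition of (upper) density there are arbitrarily large $n$ with $|S\cap[n]|\ge\rho' n$, and since $\mu_n\to0$ we may pick such an $n$ large enough that also $\mu_n<\rho'$; set $\delta_n=|S\cap[n]|/n$, so $\delta_n\ge\rho'>\mu_n$. The minimum defining $\mu_n$ is attained (it ranges over a finite nonempty set), so $\lambda(k,[n],\mu_n)>0$, whence $W(k,[n],\mu_n)\ge\lambda(k,[n],\mu_n)>0$, and by~(ii) also $W(k,[n],\delta_n)>0$. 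Therefore every subset of $[n]$ of cardinality $|S\cap[n]|$ contains an arithmetic progression of length $k$; in particular $S\cap[n]$ does, and that progression lies in $S$. Since $k$ was arbitrary, $S$ contains arbitrarily long arithmetic progressions, i.e.\ Szemer\'edi's theorem holds.

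I do not expect a genuine obstacle: the whole argument is bookkeeping around Theorem~\ref{thm:Gowers}. The only points that need care are the rounding between the real parameter $\delta$ in the statement and the finitely many densities $D/n$ actually realized in $[n]$, and the non-monotonicity of $\lambda$ in $D$ --- which forces the deduction to pass through the true extremal quantity $W$ rather than directly through $\lambda$.
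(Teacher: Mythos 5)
Your proof is correct and follows the same strategy as the paper: the forward limit is read off directly from Theorem~\ref{thm:Gowers}, and the converse is obtained by transferring positivity from $\lambda$ to $W$ via $\lambda\le W$ and then invoking the finitary formulation of Szemer\'edi's theorem. You are more explicit than the paper about the rounding between a real $\delta$ and the realized densities $D/n$, and about the monotonicity of $W$ in $D$ (the paper uses this silently when it passes from ``$m_n<\epsilon$'' to ``every subset of density $\epsilon$ has a $k$-AP''), but these are bookkeeping details, not a different argument.
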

\begin{proof}
We want to show that for any $\epsilon > 0$ and $k \in \mathbb{Z}_+$ there exists $N_{k,\epsilon} \in \mathbb{R}_+$ such that 
\[
\min \{ \delta \in \mathbb{R}_+ : W(k,[n],\delta) > 0 \} < \epsilon
\]
for all $n > N_{k,\epsilon}$. By Theorem \ref{thm:Gowers} we know that if we pick $N_{k,\epsilon}= 2 \uparrow 2 \uparrow \epsilon^{-1} \uparrow2 \uparrow 2 \uparrow k+9$, then any subset $S \subset \{1,\dots,n\}$ with cardinality $|S| \geq \epsilon n$, where $n>N$, contains an arithmetic progression of length $k$, hence the desired inequality holds.

On the other hand, suppose 
\[
\lim_{n \rightarrow \infty} \min \{ \delta \in \mathbb{R}_+ : \lambda(k,[n],\epsilon) > 0 \} = 0.
\]
Since $\lambda(k,[n],\epsilon) \leq W(k,[n],\epsilon)$ it follows that
\[
\lim_{n \rightarrow \infty} \min \{ \delta \in \mathbb{R}_+ : W(k,[n],\epsilon) > 0 \} = 0,
\]
or in other words that for any $\epsilon > 0$ there exists an $N_{k,\epsilon}$ such that for any $n>N_{k,\epsilon}$ it holds that $S \subset \{1,\dots,n\}$ with cardinality $|S| = n \epsilon$ has an arithmetic progression of length $k$. This is equivalent to that there are bounds for Szmer\'edi's theorem on the form $N(k,\epsilon) = N_{k,\epsilon}$.
\end{proof}

Another proof strategy for Szemer\'edi's theorem relies on generalizing Szemer\'edi's regularity lemma to hypergraphs, and was developed independently by Gowers \cite{Gowers2007} and Nagle, R\"odl, Schacht and Skokan \cite{Rodl2004, Rodl2006, Nagle2006, Rodl2007_1, Rodl2007_2}.

All the results above on Szemer\'edi's theorem require a positive upper density, but this condition might not be necessary. An open problem is how dense does a set have to be in order to contain arithmetic progressions. The following has been conjectured by Erd\H{o}s and Tur\'an.
\begin{conjecture}[Erd\H{o}s-Tur\'an conjecture]
Let $S$ be a subset of the positive integers. $S$ contains arithmetic progressions of any length if $\displaystyle\sum_{x\in S} \frac{1}{x}$ diverges.
\end{conjecture}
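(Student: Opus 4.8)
The final displayed statement is the Erd\H{o}s--Tur\'an conjecture, and as of the writing of this paper it is open; in particular the authors have not proved it, and we should not pretend to. The honest ``proof proposal'' here is therefore a description of the strategy one would \emph{attempt}, together with a clear account of why every known approach, including the real-algebraic machinery developed in this paper, falls short. I will write the plan in that spirit.

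\textbf{The plan.} The natural route is to reduce the divergence hypothesis to a density statement and then invoke Szemer\'edi's theorem in an effective form. First I would try to show that if $\sum_{x\in S}1/x$ diverges then $S$ cannot be too sparse on too many scales simultaneously: by dyadic decomposition, writing $S_m = S\cap[2^m,2^{m+1})$, divergence of $\sum_{x\in S}1/x$ forces $\sum_m |S_m|/2^m = \infty$, so the ``local densities'' $\delta_m = |S_m|/2^m$ cannot all be small; in fact for every $\epsilon>0$ infinitely many $m$ would need $\delta_m \gtrsim 1/m$ (and one cannot do much better in general, since the primes show $\sum 1/x$ can diverge with $\delta_m \asymp 1/m$). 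Then, for a target progression length $k$, I would want to apply a quantitative Szemer\'edi theorem on the interval $[2^m,2^{m+1})$: by Theorem~\ref{thm:Gowers}, a subset of $\{1,\dots,M\}$ of density $\delta$ contains a $k$-term progression once $M \geq N(k,\delta) = 2\uparrow 2\uparrow \delta^{-1}\uparrow 2\uparrow 2\uparrow k+9$. The hoped-for conclusion would then follow if some scale $2^m$ with $\delta_m \geq \delta$ satisfied $2^m \gtrsim N(k,\delta)$ --- i.e. if the local density stayed above the Szemer\'edi threshold for that scale.

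\textbf{The main obstacle.} This is exactly where the argument collapses, and the obstruction is quantitative, not merely technical. To run the reduction we need, at scale $M$, a density threshold $\delta(M,k)$ that decays \emph{fast enough} that the harmonic-sum divergence keeps some scale above it: concretely we would need $\delta(M,k) = O(1/\log M)$ (matching the primes), whereas Theorem~\ref{thm:Gowers} only gives something like $\delta(M,k)$ decaying like an inverse tower in $\log^{*}$-type terms --- far, far slower than $1/\log M$. Even the best known bounds for $k=3$, namely Sanders's $\delta(N,3) \leq C/(\log N)^{1+o(1)}$ quoted above, just barely fail to beat $1/\log N$, and for $k\geq 4$ we are nowhere close. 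So the Erd\H{o}s--Tur\'an conjecture, even for $k=3$, is strictly stronger than the current state of the art on density bounds in Szemer\'edi's theorem; a genuine proof would require pushing $\delta(N,3)$ below $1/\log N$ (for $k=3$, Roth-type progressions) and establishing analogous nearly-logarithmic bounds for all $k$.

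\textbf{Where the methods of this paper fit in.} The certificates constructed here (Theorem~\ref{thm:density}, Theorem~\ref{thm:density2}, Corollary~\ref{cor:density}) are exactly of the type needed to attack $\delta(N,k)$ from below: a Positivstellensatz certificate that $W(k,\mathbb{Z}_p,D/p)>0$ for $D$ as small as $p^{1-\epsilon}$, say, would give $\delta(p,k) = O(p^{-\epsilon})$ and would dwarf the conjecture. Corollary~\ref{cor:density} shows, however, that the \emph{particular} relaxation used here cannot do this: it certifies positivity only for $D \gtrsim p/4$, i.e. constant density. The plan for a future proof of Erd\H{o}s--Tur\'an along these lines would therefore be to climb the Lasserre hierarchy --- higher-degree sums of squares, exploiting the cyclic and multiplicative symmetry of $\mathbb{Z}_p$ to keep the semidefinite programs tractable --- and hope that at some finite level the certificate threshold drops to $D = o(p)$, ideally to $D = O(p/\log p)$. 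Whether any finite level of the hierarchy achieves this is, as far as we know, open; this is precisely the program outlined in Section~\ref{sec:discussion}. So the ``proof'' we can offer is a conditional one: Erd\H{o}s--Tur\'an (for a fixed $k$) follows from the existence of degree-$d$ sum-of-squares certificates of the positivity of $W(k,\mathbb{Z}_p,D/p)$ valid for $D \leq C p/\log p$ and all large $p$, and constructing such certificates is the open problem the methods of this paper are designed to address.
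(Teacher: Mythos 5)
You are right not to offer a proof: this statement is the Erd\H{o}s--Tur\'an conjecture, which the paper merely quotes in its background section (Section \ref{sec:EAP}) as motivation; it is a famous open problem and the paper contains no proof of it, so there is nothing to compare your argument against. Your assessment of its status and of the quantitative obstruction is essentially correct, and your placement of the paper's Positivstellensatz machinery (Theorems \ref{thm:density}, \ref{thm:density2}, Corollary \ref{cor:density}) as a possible but currently insufficient route matches the discussion in Section \ref{sec:discussion}.

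Two small quantitative corrections to your sketch. First, divergence of $\sum_{x\in S}1/x$ does \emph{not} force $\delta_m \gtrsim 1/m$ infinitely often: the dyadic densities could be as small as $\delta_m \asymp 1/(m\log m)$ and the sum $\sum_m \delta_m$ would still diverge. Consequently a threshold of the form $\delta(M,k)=O(1/\log M)$ is not quite enough to close the dyadic argument; what suffices is a bound $\delta(M,k)\leq C/(\log M)^{1+c}$ for some fixed $c>0$, since then a set with no $k$-term progression has $\sum_m \delta_m \leq C\sum_m m^{-(1+c)}<\infty$, contradicting divergence. So the correct target is beating $\log$ by a power strictly greater than $1$, which is exactly why Sanders's bound $\delta(N,3)\leq C/(\log N)^{1+o(1)}$, quoted in the paper, falls just short for $k=3$, and why all known bounds for $k\geq 4$ are hopelessly far away. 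With that adjustment your conditional formulation (Erd\H{o}s--Tur\'an for fixed $k$ would follow from certificates showing $W(k,\mathbb{Z}_p,D/p)>0$ for $D\leq Cp/(\log p)^{1+c}$, together with a transfer from $\mathbb{Z}_p$ to $[n]$ as in Corollary \ref{cor:limit}) is the accurate statement of what the paper's program would have to deliver.
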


The primes do not have positive density, but results on arithmetic progressions among them would follow from the Erd\H{o}s-Tur\'an conjecture. Existence of arithmetic progressions of length 3 among any positive proportion of the prime numbers was proven by Green \cite{Green2005} and quantitative improvements were made by Helfgott and Roton \cite{Helfgott2011}. The result for arithmetic progressions of arbitrary length was found using Szemer\'edi's regularity lemma and properties about the distribution of primes by Green and Tao \cite{Green2007}.
\begin{theorem}[Green-Tao theorem]
There exist arbitrarily long arithmetic progressions among the prime numbers.
\end{theorem}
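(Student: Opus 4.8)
The plan is to follow the transference strategy: rather than working with the primes directly, I would work with a weighted version and show that a set of positive relative density inside a sufficiently pseudorandom ``enveloping'' set must contain long arithmetic progressions. First I would apply the \emph{$W$-trick}: fix a slowly growing $w=w(N)$, set $W=\prod_{p\le w}p$, and restrict attention to primes in a fixed residue class $b \pmod{W}$ with $(b,W)=1$; after rescaling $n\mapsto Wn+b$ this removes the obstruction that the primes avoid small residue classes, and it suffices to find long progressions in this rescaled set, which now has density bounded below by a constant among the integers up to $N$. Concretely one introduces the modified von Mangoldt function $\widetilde{\Lambda}(n)=\tfrac{\varphi(W)}{W}\log(Wn+b)\,\mathbf{1}[\,Wn+b \text{ prime}\,]$, which has mean value $\asymp 1$ on $\{1,\dots,N\}$, and then transfers everything to the cyclic group $\mathbb{Z}_{N'}$ for a prime $N'\asymp kN$ so that genuine progressions and progressions mod $N'$ coincide.

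Next I would construct a majorant. Using truncated divisor sums of Goldston--Y\i ld\i r\i m / Selberg type, define $\nu(n)=\tfrac{\varphi(W)}{W}\tfrac{1}{\log R}\big(\sum_{d\mid Wn+b,\ d\le R}\mu(d)\log(R/d)\big)^{2}$ with $R=N^{c}$ for a small constant $c=c(k)>0$, and check that $\nu(n)\gg \widetilde{\Lambda}(n)$ pointwise away from the low end and that $\nu$ has mean value $1+o(1)$. The heart of the argument is then to verify that $\nu$ is a \emph{pseudorandom measure}: that it satisfies the \emph{$k$-linear-forms condition} (every bounded-complexity system of affine-linear forms evaluated against $\nu$ equals the expected main term $1+o(1)$) and the \emph{correlation condition}. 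This is where the Goldston--Y\i ld\i r\i m computation enters, reducing the moments of $\nu$ to contour integrals and local density products whose main terms conveniently cancel against the normalization, the $W$-trick guaranteeing that all the Euler-product local factors are $1+o(1)$.

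With a pseudorandom $\nu$ in hand, the key input is a \emph{relative Szemer\'edi theorem}: if $f:\mathbb{Z}_{N'}\to\mathbb{R}_{\ge 0}$ satisfies $0\le f\le \nu$ and $\mathbb{E}_{n}\, f(n)\ge\delta>0$, then the normalized count of $k$-term progressions weighted by $f$ is at least $c(k,\delta)-o(1)>0$. I would prove this via a \emph{Koopman--von Neumann / energy-increment decomposition} of $f$ relative to $\nu$, writing $f=f_{1}+f_{2}$ with $0\le f_{1}\le 1+o(1)$, $\mathbb{E} f_{1}=\mathbb{E} f$, and $\|f_{2}\|_{U^{k-1}}=o(1)$; the linear-forms condition is exactly what makes the associated projection well-behaved and what powers a \emph{generalized von Neumann theorem} showing that a function bounded by $\nu$ with small $U^{k-1}$-norm contributes negligibly to the progression count. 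One then applies the ordinary quantitative (Varnavides-averaged) Szemer\'edi theorem to the bounded part $f_{1}$ to produce a positive main term. Finally, taking $f=\widetilde{\Lambda}$ truncated away from the negligible low range, which is dominated by $O(\nu)$ and has mean $\asymp 1$, yields $\gg_{k}N^{2}$ genuine $k$-term progressions among primes up to $WN$ for every fixed $k$; since $k$ is arbitrary, the theorem follows.

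The main obstacle is the twofold pseudorandomness verification — establishing the linear-forms and correlation conditions for the sieve weight $\nu$ — together with the relative generalized von Neumann theorem that converts them into control of the progression count; by contrast the $W$-trick, the passage to $\mathbb{Z}_{N'}$, the energy decomposition, and the appeal to quantitative Szemer\'edi are comparatively routine once those ingredients are in place.
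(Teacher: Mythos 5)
This theorem is not proved in the paper at all: it is quoted as background in Section 3.1 with a citation to Green and Tao's original article, so there is no in-paper argument to compare against. Your outline is a faithful summary of the actual Green--Tao transference proof (the $W$-trick, the Goldston--Y\i ld\i r\i m majorant $\nu$, the linear-forms and correlation conditions, the generalized von Neumann theorem, the Koopman--von Neumann/dense-model decomposition, and the appeal to a Varnavides-type quantitative Szemer\'edi theorem), and the architecture you describe is the correct one.

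As a proof, however, what you have written is a roadmap rather than an argument: the two ingredients you yourself flag as the main obstacles are precisely where all of the substance lies, and they are only named, not established. Verifying that the truncated divisor sum $\nu$ satisfies the $k$-linear-forms and correlation conditions requires the full Goldston--Y\i ld\i r\i m contour-integral/Euler-product analysis, and the relative Szemer\'edi theorem requires proving the generalized von Neumann theorem and the structure/energy-increment (or dense-model) decomposition relative to a pseudorandom measure; none of these can be treated as routine or cited away if the goal is a self-contained proof. There is also a small but real slip in the setup: after the $W$-trick the rescaled set of primes does \emph{not} have density bounded below by a constant among the integers up to $N$ (its density still decays like $W/(\varphi(W)\log N)$); what is bounded below is the mean of the weighted function $\widetilde{\Lambda}$, and the whole point of the transference step is that one works with relative density against $\nu$ rather than absolute density. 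So the proposal is a correct description of the known proof strategy, but with genuine gaps at the pseudorandomness verification and the relative Szemer\'edi theorem it does not constitute a proof on its own.
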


So far we have only discussed results on upper bounds for $\delta(N,k)$, one can also try to find lower bounds. The lower bounds are still very far from the upper bounds, and surprisingly not much improvements have been done on the lower bounds for over 50 years. Let $C$ be a large enough constant. The lower bound for $k=3$ due to Behrend \cite{Behrend1946} 
\[ \delta(N,3) \geq C \frac{1}{\log^{1/4} N} \cdot \frac{1}{2^{2\sqrt{2}\sqrt{\log_2 N}}} \]
was the best bound for over 60 years until Elkin \cite{Elkin2011} recently improved it to
\[
\delta(N,3) \geq C \log^{1/4} N \cdot \frac{1}{2^{2\sqrt{2}\sqrt{\log_2 N}}}.
\]
For larger $k$ the we have the following bound by Rankin \cite{Rankin1961}:
\[ \delta(N,k) \geq \exp (-C (\log N)^{1/(k-1)}). \]
Soon after Elkin improved the bound for $k=3$, O'Bryant \cite{OBryant2011} improved the bounds for $k=3$ and general $k$ further:
For every $\epsilon>0$ and $N$ large enough
\[
\delta(N,3) \geq \left( \frac{6 \cdot 2^{3/4}\sqrt{5}}{e\pi^{3/2}}- \epsilon \right) 2^{-\sqrt{8 \log_2 N} + \frac{1}{4} \log_2 \log_2 N}
\]
and
\[
\delta(N,k) \geq C_k 2^{ -n2^{(n-1)/2} \sqrt[n]{\log_2 N} + \frac{1}{2n} \log_2 \log_2 N},
\]
where $C_k>0$ is an unspecified constant and $n= \lceil \log k \rceil$.

Szemer\'edi's theorem roughly states that any subset $S \subset \{1,\dots,N\}$ of the integers, however random and unstructured it is, will contain very structured subsets if $N$ and $|S|$ are large enough. One can ask how long arithmetic progressions there are in more structured sets, such as for example $S+S = \{s_1+s_2 | s_1,s_2 \in S \}$ and $lS = \{s_1+\dots + s_l | s_1,\dots,s_l \in S \}$. In fact, the additional structure allows us to find much longer arithmetic progressions, for example we have the following result from Bourgain \cite{Bourgain1990} on sums of sets:
\begin{theorem}
Let $p \geq 1$ be a prime nuber and $A,B$ additive sets in $\mathbb{Z}_p$ with $|A|, |B| \geq \delta p $ for some $C\frac{( \log \log p)^3}{\log p} < \delta \leq 1$ where $C>1$ is a large enough absolute constant. Then $A + B$ contains an arithmetic progression of length at least $ \exp(c(\delta  \log p)^{1/3})$ for some constant $c>0$.
\end{theorem}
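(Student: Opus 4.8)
The plan is a Fourier-analytic argument on $\mathbb{Z}_p$ in the spirit of Bourgain, reducing the problem to controlling the fluctuations of a single convolution along a well-chosen progression. First I would dispose of the trivial range: if $|A|+|B|>p$ then $A+B=\mathbb{Z}_p$ by pigeonhole, so one may assume $\delta\le\alpha:=|A|/p$ and $\beta:=|B|/p\le 1/2$. Since $A+B$ is exactly the support of $f:=1_A*1_B$, whose Fourier expansion is $f(n)=\alpha\beta+\sum_{r\neq 0}\widehat{1_A}(r)\widehat{1_B}(r)\,e(rn/p)$ with strictly positive main term $\alpha\beta\ge\delta^2$, it suffices to exhibit a nonzero common difference $d$ and a base point $x$ for which $f(x+jd)-\alpha\beta>-\alpha\beta$ for every $0\le j<\ell$, where $\ell=\lfloor\exp(c(\delta\log p)^{1/3})\rfloor$; the points $x,x+d,\dots,x+(\ell-1)d$ then form the desired progression inside $A+B$.

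The argument rests on two ingredients. The first produces a long progression of \emph{good common differences}: fix a threshold $\theta$, let $\Lambda=\{r:\max(|\widehat{1_A}(r)|,|\widehat{1_B}(r)|)\ge\theta\}$, so that Parseval gives $|\Lambda|\le 2\theta^{-2}$, and apply Dirichlet's pigeonhole principle on the torus $\mathbb{T}^{|\Lambda|}$ to find a nonzero $d$ with $\|r\cdot jd/p\|<\eta$ for all $r\in\Lambda$ and all $0\le j<\ell$, which is possible as soon as $(\ell/\eta)^{|\Lambda|}\le p$, i.e.\ $\ell\le\eta\,p^{1/|\Lambda|}$. For such a $d$ the large-frequency phases $e(r\,jd/p)$ all lie within $O(\eta)$ of $1$, uniformly in $j$, so the $\Lambda$-part of the fluctuation of $f$ at $x+jd$ equals $\sum_{r\in\Lambda\setminus 0}\widehat{1_A}(r)\widehat{1_B}(r)e(rx/p)$ up to a $j$-independent error $O(\eta\sqrt{\alpha\beta})$, which is made $\le\alpha\beta/10$ by taking $\eta\asymp\sqrt{\alpha\beta}$ and is absorbed into the value $f(x)$. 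The second ingredient controls the small-frequency tail $g(n):=\sum_{r\notin\Lambda\cup\{0\}}\widehat{1_A}(r)\widehat{1_B}(r)e(rn/p)$: Parseval gives $\|g\|_{L^2(\mathbb{Z}_p)}\le\theta\sqrt{\min(\alpha,\beta)}$, and, crucially, an almost-periodicity / moment estimate of Croot--Sisask type gives a quantitative $L^{2m}$ bound on $g$ and its shifts, which is what one needs to beat a union bound over the $\ell$ points.

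It then remains to choose the base point $x$. By averaging, $f(x)\ge\alpha\beta/2$ on a set of $x$ of density $\ge\alpha\beta$; simultaneously, for a random $x$ the bound $|g(x+jd)|\le\alpha\beta/10$ holds for \emph{all} $0\le j<\ell$ with probability at least $1-\ell\cdot(10\|g\|_{L^{2m}}/\alpha\beta)^{2m}$ by Chebyshev against the $L^{2m}$ estimate, so an $x$ in both events exists provided $\ell\cdot(10\|g\|_{L^{2m}}/\alpha\beta)^{2m}<\alpha\beta$, which forces the moment exponent to grow, $m\asymp\log\ell$. Feeding this back, together with $\theta\asymp\sqrt{\alpha\beta}\asymp\delta$ and $|\Lambda|\le 2\theta^{-2}$ (with the freedom to coarsen $\theta$, shrinking $|\Lambda|$ at the price of a larger tail the higher moment must still control), and the Bohr-set constraint $\ell\le\eta\,p^{1/|\Lambda|}$, one is led to an inequality of the shape $(\log\ell)^{3}\lesssim\delta\log p$, i.e.\ $\ell\le\exp(c(\delta\log p)^{1/3})$; the hypothesis $\delta>C(\log\log p)^3/\log p$ is exactly what keeps this $\ell$ at least $2$ (indeed polylogarithmic in $p$), so the progression is nontrivial.

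The hard part will be this last step: one needs $f$ to agree with its mean not merely on average but at \emph{every one} of the $\ell$ points at once, which costs a union bound and hence genuinely high-moment ($L^{2m}$, $m\asymp\log\ell$) control of the Fourier tail $g$; but a strong $L^{2m}$ bound forces the cut-off spectrum $\Lambda$ to be large (threshold $\theta$ small), and a large $\Lambda$ makes the Bohr set $B(\Lambda,\cdot)$ — hence the progression $\{jd\}$ — short via $\ell\le\eta\,p^{1/|\Lambda|}$. Resolving this three-way tension between the length $\ell$, the spectrum size $|\Lambda|\asymp\theta^{-2}$, and the moment $m\asymp\log\ell$ is precisely the optimization that produces the exponent $1/3$ rather than something larger, and the one technical input one cannot avoid is the quantitative almost-periodicity (equivalently, $L^{2m}$) estimate for the convolution and its shifts; the rest is bookkeeping with Parseval, Cauchy--Schwarz and Dirichlet's theorem.
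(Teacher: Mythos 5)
You should first be aware that the paper itself contains no proof of this statement: it appears in the literature-review part of Section \ref{sec:EAP} and is quoted, with a citation, from Bourgain \cite{Bourgain1990}, so there is no in-paper argument to compare yours with. Judged on its own merits, your proposal correctly identifies the Bourgain-style framework — view $A+B$ as the support of $1_A*1_B$, split the spectrum at a threshold $\theta$, place a progression of common difference $d$ inside the Bohr set of the large spectrum via Dirichlet's pigeonhole (giving $\ell\le\eta\,p^{1/|\Lambda|}$), and control the minor-arc tail simultaneously at all $\ell$ points by a union bound against a high moment — but it stops exactly where the real work lies, and that missing step is a genuine gap rather than bookkeeping.

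Concretely, the estimates you actually state cannot run your own union bound. Parseval gives $\|g\|_2\le\theta\sqrt{\min(\alpha,\beta)}$ and Cauchy--Schwarz gives $\|g\|_\infty\le\sqrt{\alpha\beta}$; interpolation then yields $\|g\|_{2m}\approx\sqrt{\alpha\beta}\approx\delta$ once $m\asymp\log\ell$, whereas your Chebyshev step requires $\|g\|_{2m}\lesssim\alpha\beta\approx\delta^2$. The moment gain you implicitly need, of Rudin/Khintchine type $\|g\|_{2m}\lesssim\sqrt{m}\,\|g\|_2$, is only available when the contributing frequencies are dissociated, and the tail spectrum $\Lambda^c$ has no such structure; supplying it is precisely what forces Bourgain's Riesz-product and spectral-structure analysis (or, in the alternative route you allude to, a genuine Croot--Sisask $L^{2m}$ almost-periodicity theorem — which in fact reorganizes the whole proof without any spectrum splitting and gives the stronger exponent $1/2$). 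Because that input is only named, the concluding ``bookkeeping'' that is supposed to output $(\log\ell)^3\lesssim\delta\log p$ is not actually derivable from what you wrote: even granting a $\sqrt{m}$-gain, the three-way optimization among $\ell$, $|\Lambda|\asymp\theta^{-2}$ and $m\asymp\log\ell$ does not visibly produce the $1/3$ exponent, and with the bounds as stated it does not close at all. So what you have is a correct high-level road map with the decisive lemma and the decisive optimization left unproven; to turn it into a proof you would need to either carry out Bourgain's minor-arc moment analysis or import and verify a quantitative almost-periodicity statement for $1_A*1_B$ and its shifts.
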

Iterated sumsets have even more structure, and results include those of  S\'ark\"ozy \cite{sarkozy1989}, Lev \cite{Lev1997, Lev1999}, Nathanson and Rusza \cite{Nathanson2000, Nathanson2002}, and Szemer\'edi and Vu \cite{Szemeredi2006}. We state a quite general and strong theorem from \cite{Szemeredi2006}:
\begin{theorem}
Let $d \geq 1$. There exists constants $C_d,D_d > 0$ such that for any $l \geq 1$ and $A \subset \{1,\dots N \}$ of cardinality $|A| \geq C_d \frac{N}{l^d}$ and $|A| \geq 2$, $lA$ contains a proper arithmetic progression of length $D_dl|A|^{1/d}$
\end{theorem}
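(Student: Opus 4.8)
The plan is to induct on the rank parameter $d$, reducing everything to a one-dimensional statement and a structural dichotomy for the set $A$. First I would normalize: translate so that $\min A = 0$, and divide every element of $A$ by $\gcd(A)$; this replaces $N$ by a possibly smaller integer, leaves $|A|$ and the shape of the conclusion untouched, and lets me assume $0 \in A$, $\gcd(A) = 1$, $A \subseteq \{0,1,\dots,N\}$. Any proper arithmetic progression produced inside $lA$ for the rescaled set pulls back to a proper arithmetic progression of the same length inside $lA$ for the original.

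For the base case $d = 1$ the hypothesis reads $|A| \ge C_1 N / l$, equivalently $l \ge C_1 N/|A|$, and I would invoke the theory of $l$-fold sumsets of a single set (the results of S\'ark\"ozy \cite{sarkozy1989} and Lev \cite{Lev1997}): once $l$ exceeds a constant multiple of $N/|A|$, the sumset $lA$ contains a full interval of the form $[K,\, lN - K]$ with $K \asymp N/|A|$. Taking $C_1$ large forces $K \le lN/4$, so $lA$ contains an honest, hence proper, arithmetic progression of common difference $1$ and length $\ge lN/2 \ge l|A|/2$, giving $D_1 = 1/2$.

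For the inductive step I would exploit that a relatively dense $A \subseteq [0,N]$ is governed, up to constants, by a proper generalized arithmetic progression. Since $2A \subseteq [0,2N]$ the doubling constant of $A$ is at most $O(l^d)$, so Freiman's theorem (see \cite{TaoVu2006}) places $A$ inside a proper GAP $P = \{a_0 + \sum_{i=1}^{r} x_i v_i : 0 \le x_i < n_i\}$ of bounded rank with $|P| \le O_d(|A|)$; the key point is to arrange, using the density hypothesis $|A| \ge C_d N/l^d$ together with a descent on $d$, that the rank satisfies $r \le d$. Then $n_1 := \max_i n_i \ge |P|^{1/r} \gg |A|^{1/d}$, and summing $l$ copies of $P$ stretches the first coordinate: fixing the remaining coordinates, $lP$ contains the proper progression $\{\,la_0 + \sum_{i\ge 2} c_i v_i + y v_1 : 0 \le y \le l(n_1-1)\,\}$ of common difference $v_1 \ne 0$ and length $l(n_1-1)+1 \gg l|A|^{1/d}$. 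The last move is to replace $lP$ by $lA$: since $A$ occupies a positive proportion of $P$ along every fibre, a Pl\"unnecke--Ruzsa covering argument (or a direct fibrewise pigeonhole) shows that a bounded number of copies of $A$ already realize a proper progression along the long direction of $P$ of comparable length, after which the remaining $\sim l$ summands extend it; choosing the thresholds and constants so that this beats $D_d\, l |A|^{1/d}$ closes the induction.

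The hard part, and the step I expect to be the genuine obstacle, is the middle of the inductive step: controlling the rank of the Freiman GAP by exactly $d$ rather than by the $O(d\log l)$ that the black-box theorem delivers, and doing so uniformly in $l$. This is where the precise density hypothesis must be used, and it forces the descent on $d$ to be set up carefully, passing to a sub-GAP or to a dilate of $A$ of one lower rank while tracking how $N$, $|A|$ and $l$ transform. A secondary but real difficulty is the passage from progressions in $lP$ to progressions in $lA$ and the verification that the final progression is proper; collisions are exactly what the properness of $P$ and the loose spacing of its generators (itself a consequence of $P \subseteq [0,N]$ together with $|A| \gtrsim N/l^d$) are there to rule out. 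This is the content of the argument of Szemer\'edi and Vu \cite{Szemeredi2006}, to which I would refer for the full bookkeeping.
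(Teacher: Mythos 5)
The paper does not prove this statement at all: it is quoted verbatim from Szemer\'edi and Vu \cite{Szemeredi2006} as part of the literature survey on arithmetic progressions in iterated sumsets, so there is no internal proof to compare your sketch against. Judged on its own terms, your proposal identifies the right landmarks --- induction on $d$, a one-dimensional base case via S\'ark\"ozy and Lev \cite{sarkozy1989, Lev1997}, and a generalized arithmetic progression structure for $A$ whose longest side gets stretched by the $l$-fold sum --- but it is not a proof, and the gap is exactly the one you flag yourself. The doubling constant you extract from $2A \subseteq [0,2N]$ is of order $l^d/C_d$, which grows with $l$, so a black-box application of Freiman's theorem yields a GAP whose rank and size bounds depend on $l$ and are useless uniformly in $l$; the ``descent on $d$'' that is supposed to force rank at most $d$ with size $O_d(|A|)$ is asserted but never described, and producing such a structure is precisely the technical heart of the Szemer\'edi--Vu argument, which they carry out by a long induction on the number of summands with a growth-versus-containment dichotomy rather than by one invocation of Freiman. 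The same applies to the passage from a long progression in $lP$ to one in $lA$ and to the properness check, both of which you defer to \cite{Szemeredi2006}. Since the two steps you yourself label ``the hard part'' are the content of the cited paper, the proposal should be read as a plausible roadmap with its core missing, not as an independent or complete argument.
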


The number of arithmetic progressions in a fixed density set is closely related to the number of monochromatic arithmetic progressions in coloring of a set. All results above can easily be translated in terms of colorings.
\begin{definition}
A \emph{$c$-coloring} of a set $S$ is a map $\chi : S \rightarrow \{1,\dots,c\}$ splitting $S$ into $c$ color classes.
\end{definition}
\begin{proposition}
If there exist arithmetic progressions in any set $S \subseteq \mathbb{Z}^+$ with positive upper density $\rho(S)>0$ then there exist monochromatic arithmetic progressions in any $c$-coloring of the positive integers.
\end{proposition}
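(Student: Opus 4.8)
The plan is a direct pigeonhole reduction to the density statement we are assuming. Fix an arbitrary $c$-coloring $\chi : \mathbb{Z}^+ \rightarrow \{1,\dots,c\}$ and let $S_1,\dots,S_c$ be its color classes, so that $S_1 \cup \dots \cup S_c = \mathbb{Z}^+$ and the $S_i$ are pairwise disjoint. The goal is to locate a single color class of positive upper density, apply the hypothesis to it, and observe that the arithmetic progression produced lies entirely inside one color class and is therefore monochromatic.

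First I would establish that $\rho(S_i) \geq 1/c$ for some $i$. For every $n$ we have $\sum_{i=1}^{c} |S_i \cap \{1,\dots,n\}| = n$, so for each $n$ there is at least one index $i(n) \in \{1,\dots,c\}$ with $|S_{i(n)} \cap \{1,\dots,n\}| \geq n/c$. Since $i(n)$ takes only finitely many values but $n$ ranges over an infinite set, some fixed index $i_0$ satisfies $i(n) = i_0$ for infinitely many $n$; along that subsequence $|S_{i_0} \cap \{1,\dots,n\}|/n \geq 1/c$, and hence
\[
\rho(S_{i_0}) = \limsup_{n \to \infty} \frac{|S_{i_0} \cap \{1,\dots,n\}|}{n} \geq \frac{1}{c} > 0 .
\]

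Now I would apply the assumed statement to the set $S = S_{i_0}$: since $\rho(S_{i_0}) > 0$, there exist arithmetic progressions in $S_{i_0}$ (of length $3$, or of arbitrary length, matching whichever version of the hypothesis is in force). Any such arithmetic progression consists of integers all belonging to $S_{i_0}$, i.e.\ all of the same color under $\chi$, so it is a monochromatic arithmetic progression in the given coloring. As $\chi$ was arbitrary, this proves the proposition.

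The only subtlety — and the one point worth spelling out rather than the genuinely hard part, since there is none — is that one cannot simply say the densities of the color classes ``sum to $1$'' and pick the largest, because $\limsup$ is not additive over a partition; the pigeonhole-over-$n$ argument in the second paragraph is exactly what repairs this gap and is the step I would be careful to write out.
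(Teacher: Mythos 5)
Your proof is correct and follows essentially the same route as the paper: pigeonhole to find a color class of upper density at least $1/c$, then apply the assumed density statement to that class. In fact your write-up is slightly more careful than the paper's: the paper observes that for every $n$ some color class has at least $n/c$ elements in $\{1,\dots,n\}$ and then immediately concludes "for some $i$ the color class $S_i$ has positive upper density," silently using the fact that the maximizing index can be taken fixed along a subsequence; you spell out exactly that fixed-index pigeonhole step, which is the right thing to do since $\limsup$ is not additive over the partition.
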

\begin{proof}
Suppose there exist arithmetic progressions in any set $S \subseteq \mathbb{Z}^+$ with positive upper density. Let $\chi(\mathbb{Z}^+)$ be a $c$-coloring of the positive integers. Since $c$ is finite we have 
\[
\max_{i \in \{1,\dots,c\} }  \frac{|\{ j \in \{1,\dots,n\}: \chi(j)=i \} | }{n} \geq \frac{1}{c}>0, 
\]
for any $n$, and thus for some $i$ the color class $S_i=\{ j \in \{1,\dots,n\}: \chi(j)=i \}$ has positive upper density. By assumption there are arithmetic progressions in $S_i$, and thus of color $i$.
\end{proof}

\subsection{From existence to counting}
\label{sec:CAP}
The ultimate goal in combinatorial problems is always to count the desired constellation. The existence results on arithmetic progressions provide very strong results on how many arithmetic progressions there are in certain limits, but provides no information about the amount of arithmetic progressions far from the limits. To make the statements precise, let us define the functions of interest:

\begin{definition}

Let $R(k,[n],c)$ denote the  minimal number of monochromatic arithmetic progressions of length $k$ in any $c$-coloring of $[n]=\{1,\dots,n\}$.

Let $R(k,G,c)$ denote the minimal number of monochromatic arithmetic progressions of length $k$ in any $c$-coloring of a finite group $G$.

Let $W(k,[n],\delta)$ denote the minimal number of arithmetic progressions of length $k$ in any subset $S \subset\{1,\dots,n\}$ of cardinality $|S|=n\delta$.

Let $W(k,G,\delta)$ denote the minimal number of arithmetic progressions of length $k$ in any subset $S \subset G$ of cardinality $|S|=|G|\delta$.
\end{definition}

It would of course be desirable to find the exact values of these functions for any possible inputs $k$, $n$, $c$, $\delta$ and $G$, but it is not likely that they will ever be found because of how complicated the functions are. Finding the exact functions would give us sharp upper bounds for Szemer\'edi's theorem for all $k$, which is known to be extremely difficult already for low values of $k$. Since there is little hope in finding the exact functions, we aim to find as good lower bounds to the functions as possible. Good lower bounds provide a better understanding of arithmetic progressions, and if they are good enough they may additionally improve the current best existence bounds. We state all the best bounds known for various inputs that we have been able to find in the literature. Upper bounds are also provided, and are usually achieved by colorings.

{\bf R(3,[n],2)}:
Upper and lower bound from \cite{Parrilo2008_2}.
\[ \frac{1675}{32768}n^2(1+o(1)) \leq R(3,[n],2) \leq \frac{117}{2192}n^2(1+o(1)) \]

{\bf R(3,$\mathbb{Z}_n$,2)}:
Upper and lower bound from \cite{Sjoland_cyclic} (The special case $n \mod 24 \in \{1, 5, 7, 11, 13, 17, 19, 23\}$ found first in \cite{Cameron2007}).
\[ n^2/8-c_1n+c_2 \leq R(3,\mathbb{Z}_n,2) \leq n^2/8-c_1n+c_3\] 
where constants depends on $n \mod 24$:
\[
\begin{array}{c|c|c|c}
n \mod 24 & c_1 & c_2 & c_3 \\
\hline
1,5,7,11,13,17,19,23 & 1/2 & 3/8 & 3/8 \\
8,16 & 1 & 0 & 0 \\
2,10 & 1 & 3/2 & 3/2 \\
4,20 & 1 & 0 & 2 \\
14,22  & 1 & 3/2 & 3/2 \\
3,9,15,21 &  7/6 & 3/8 & 27/8 \\ 
0 &  5/3 & 0  & 0 \\ 
12 &  5/3 & 0 & 18 \\ 
6,18 &  5/3 &1/2 & 27/2 \\ 
\end{array}
\]

{\bf R(3,$D_{2n}$,2)}:
Upper and lower bound from \cite{Sjoland_cyclic}.
\[ n^2/4-2c_1n+2c_2 \leq R(3,D_{2n},2) \leq n^2/4-2c_1n+2c_3\] 
where constants depends on $n \mod 24$:
\[
\begin{array}{c|c|c|c}
n \mod 24 & c_1 & c_2 & c_3 \\
\hline
1,5,7,11,13,17,19,23 & 1/2 & 3/8 & 3/8 \\
8,16 & 1 & 0 & 0 \\
2,10 & 1 & 3/2 & 3/2 \\
4,20 & 1 & 0 & 2 \\
14,22  & 1 & 3/2 & 3/2 \\
3,9,15,21 &  7/6 & 3/8 & 27/8 \\ 
0 &  5/3 & 0  & 0 \\ 
12 &  5/3 & 0 & 18 \\ 
6,18 &  5/3 &1/2 & 27/2 \\ 
\end{array}
\]

{\bf R(3,G,2)}:
Let $G$ be any finite group.  Let $G_k$ denote the set of elements of $G$ of order $k$, $N=|G|$ and $N_k=|G_k|$. Denote the Euler phi function $\phi(k)=|\{ t \in \{1,\dots,k\}: t  \textrm{ and } k \textrm{ are coprime}\}|$. Let $K=\{k \in \{5,\dots,n\} : \phi(k) \geq \frac{3k}{4}\}$. Lower bound from \cite{Sjoland_group}:
\[
\displaystyle \sum_{k \in K}  \frac{N\cdot N_k}{8}(1- 3\frac{k-\phi(k)}{\phi(k)}) \leq R(3,G,2)
\]

{\bf R(4,$\mathbb{Z}_n$,2)}:
Wolf \cite{Wolf2010} contributed with several bound on $R(4,\mathbb{Z}_n,2)$. The bounds were improved by Lu and Peng \cite{Lu2012}. Note that the results in their paper differs by a factor 2, this is because they count $\{a,b \cdot a, b\cdot b \cdot a\}$ and $\{b \cdot b \cdot a, b \cdot a, a\}$ as distinct arithmetic progressions. If $p$ is prime we have \cite[Theorem 1.1]{Lu2012}:
\[
\frac{7}{192}p^2(1+o(1)) \leq R(4,\mathbb{Z}_p,2) \leq \frac{17}{300}p^2(1+o(1)).
\]
For general $n$ we have \cite[Theorem 1.2 and Theorem 1.3]{Lu2012}:
\[
c_1n^2(1+o(1)) \leq R(4,\mathbb{Z}_n,2) \leq c_2n^2(1+o(1)).
\]
where constants depends on $n \mod 4$:
\[
\begin{array}{c|c|c}
n \mod 4 & c_1 & c_2 \\
\hline
1,3 & 7/192 & 17/300\\
0 & 2/66 & 8543/1452000 \\
2 & 7/192 & 8543/1452000 \\
\end{array}
\]
Furthermore we have \cite[Theorem 1.5]{Lu2012}
\[
\underline{\lim}_{n \rightarrow \infty} R(4,\mathbb{Z}_n,2) \leq \frac{1}{24},
\]
and the following conjecture \cite[Conjecture 1.1]{Lu2012}:
\[
\inf_n\{R(4,\mathbb{Z}_n,2) \} = \frac{1}{24}.
\]

{\bf R(4,[n],2)}:
Upper bound from \cite[Equation (12)]{Lu2012}:
\[ R(4,[n],2) \leq \frac{1}{72}n^2(1+o(1)). \]

{\bf R(5,$\mathbb{Z}_n$,2)}:
For odd $n$ we have \cite[Theorem 1.4]{Lu2012}:
\[
R(5,\mathbb{Z}_n,2) \leq \frac{3629}{131424}n^2(1+o(1)).
\]
For even $n$ we have \cite[Theorem 1.4]{Lu2012}:
\[
R(5,\mathbb{Z}_n,2) \leq \frac{3647}{131424}n^2(1+o(1)).
\]
Furthermore we have \cite[Theorem 1.5]{Lu2012}
\[
\underline{\lim}_{n \rightarrow \infty} R(5,\mathbb{Z}_n,2) \leq \frac{1}{72}.
\]

{\bf R(5,[n],2)}:
Upper bound from \cite[Equation (13)]{Lu2012}:
\[ R(5,[n],2) \leq \frac{1}{304}n^2(1+o(1)). \]

%

{\bf W(3,$\mathbb{Z}_p$,$\delta$)}
Let $p$ be prime. A lower bound is given by Theorem \ref{thm:density}:
\[
\frac{(\delta p)^3-\frac{p+3}{2}(\delta p)^2 +\frac{p+1}{2} (\delta p)}{p-1} \leq W(3,\mathbb{Z}_p,\delta).
\]

Since $\{a,b,c\}$ is an arithmetic progression in $S \in \{1,\dots,n\}$ if and only if $a+c=2b$ for distinct $a,b,c \in S$ there are many related problems where one tries to count the number of solutions to various linear equations. One example is Schur triples, which are triples $\{a,b,c\}$ such that $a+b=c$, have been studied in \cite{Robertson1998, Schoen1999, Datskovsky2003}. Other examples include sets with no solutions to $x+y=3z$ studied in \cite{Matolcsi2013} and monochromatic solution to equations in groups \cite{Cameron2007}.

\section{Methods from real algebraic geometry}
Tools from real algebraic geometry are used to prove the main results of this paper. Even though the main result can be understood without understanding polynomial optimization, it played a vital part when finding the certificates in the main theorems. Only a few of the most important theorems will be presented. For a more extensive survey on the topic we refer to the excellent paper by Laurent \cite{Laurent2009}. For a longer review of how we applied and implemented methods from real algebraic geometry to the specific problem we refer to \cite{Sjoland_methods}.

\subsection{Polynomial optimization}
\label{sec:poly}
We use polynomial optimization to count monochromatic arithmetic progressions, and so in this section we briefly review the topic.

Let $f(x),g_1(x),\dots,g_m(x)$ be polynomials. A problem on the form
\[
\begin{array}{rll}
\rho_* = \inf  & \displaystyle  f(x) \\
\textnormal{subject to} & \displaystyle  g_1(x) \geq 0, \dots, g_m(x) \geq 0, \\
& \displaystyle  x \in \mathbb{R}^n,
\end{array}
\]
is a polynomial optimization problem. One way to solve a polynomial optimization problem is by studying the related problem:
\[
\begin{array}{rll}
\rho^* = \sup  & \lambda \\
\textnormal{subject to} & \displaystyle \lambda-f(x) \geq 0, g_1(x) \geq 0, \dots, g_m(x) \geq 0, \\
& \displaystyle  \lambda \in \mathbb{R}, x \in \mathbb{R}^n.
\end{array}
\]

\begin{definition}
Let $g_1,\dots,g_m \in \mathbb{R}[x_1,\dots,x_n]$ be polynomials. The \emph{quadratic module} generated by $g_1,\dots,g_m$ is defined by:
\[
\mathrm{QM}(g_1,\dots,g_m) = \{\sigma_0 + \sum_{i=1}^m \sigma_ig_i | \sigma_0,\dots,\sigma_m \textrm{ are sums of squares} \} . 
\]
\end{definition}

\begin{definition}
A quadratic module $\mathrm{QM}(g_1,\dots,g_m) $ is \emph{Archimedean} if there is an $N \in \mathbb{N}$ such that
\[ N - \sum_{i=1}^n x_i^2 \in
\mathrm{QM}(g_1,\dots,g_m).
\]
\end{definition}
\begin{theorem}[Putinar's Positivstellensatz]
\label{thm:Putinar}
Let the set $K = \{ x \in \mathbb{R}^n | g_1(x)\geq 0, \dots, g_m(x) \geq 0\}$ be compact. Assume that the associated quadratic module  $\mathrm{QM}(g_1,\dots,g_m) $ is Archimedean. If $f$ is strictly positive on $K$, then it is possible to find sums of squares $\sigma_1,\dots,\sigma_m$ such that $f= \sigma_0 + \sum_{i=1}^m \sigma_i g_i$.
\end{theorem}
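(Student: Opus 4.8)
\section*{Proof proposal for Theorem \ref{thm:Putinar} (Putinar's Positivstellensatz)}

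The plan is to prove the stated Positivstellensatz by the classical representation-theoretic route, deducing it from the Archimedean property together with a separation/quotient argument. First I would record the elementary consequences of the Archimedean hypothesis: since $N-\sum_i x_i^2 \in \mathrm{QM}(g_1,\dots,g_m)$ for some $N$, every coordinate function $x_i$ is bounded on $K$, so $K$ is automatically compact (the compactness hypothesis is in fact implied), and more importantly, for every $p \in \mathbb{R}[x]$ there is a rational $c>0$ with $c - p \in \mathrm{QM}(g_1,\dots,g_m)$ and $c + p \in \mathrm{QM}(g_1,\dots,g_m)$; this is proved by writing $p$ as a linear combination of monomials and using that the quadratic module is closed under addition, multiplication by squares, and contains all sums of squares, together with the identity $2ab = (a+b)^2 - a^2 - b^2$ to absorb cross terms. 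I would isolate this ``$M := \mathrm{QM}(g_1,\dots,g_m)$ is Archimedean $\Rightarrow$ $M$ generates $\mathbb{R}[x]$ as a module over bounded elements'' statement as a lemma, since it is the workhorse.

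The core step is a Krein–Milman / Hahn–Banach separation argument on the quotient. Suppose, for contradiction, that $f > 0$ on $K$ but $f \notin M$. The set $M$ is a convex cone in the real vector space $\mathbb{R}[x]$; by the Archimedean lemma it has nonempty algebraic interior (in the finest locally convex topology on $\mathbb{R}[x]$, or working degree by degree). By the separation theorem for convex cones there is a nonzero linear functional $L : \mathbb{R}[x] \to \mathbb{R}$ with $L \geq 0$ on $M$ and $L(f) \leq 0$. Normalizing $L(1) = 1$ (possible since $1 \in M$ and $L$ cannot vanish on the order unit by the Archimedean property), $L$ is a positive functional: $L(\sigma_0) \geq 0$ for all sums of squares $\sigma_0$, and $L(\sigma_i g_i) \geq 0$. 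I would then invoke the Archimedean property to show $L$ is continuous and, via a GNS-type construction (the bilinear form $(p,q) \mapsto L(pq)$ is positive semidefinite, so we pass to the quotient by its kernel and complete to a Hilbert space $H$ on which multiplication by $x_i$ gives commuting bounded self-adjoint operators, bounded because $L(N - \sum x_i^2) \cdot(\textrm{stuff}) \geq 0$ forces $\|X_i\| \leq \sqrt{N}$). Applying the spectral theorem for this commuting family yields a projection-valued measure, hence a positive Borel measure $\mu$ on $\mathbb{R}^n$ with $L(p) = \int p \, d\mu$ for all $p$.

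It then remains to check that $\mu$ is supported on $K$: for each $i$, $L(g_i \cdot q^2) \geq 0$ for all $q$ forces $\int g_i q^2 \, d\mu \geq 0$ for all polynomials $q$, and by density this gives $\int g_i \, h \, d\mu \geq 0$ for all nonnegative continuous $h$, which forces $g_i \geq 0$ $\mu$-a.e., i.e. $\mathrm{supp}\,\mu \subseteq \{g_1 \geq 0, \dots, g_m \geq 0\} = K$. But then $L(f) = \int_K f \, d\mu \geq 0$, and since $f > 0$ on the compact set $K$ and $\mu \neq 0$, in fact $L(f) > 0$, contradicting $L(f) \leq 0$. Hence $f \in M$, which is exactly the claimed identity $f = \sigma_0 + \sum_{i=1}^m \sigma_i g_i$ with each $\sigma_i$ a sum of squares.

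\emph{Main obstacle.} The delicate point is the functional-analytic passage from the positive functional $L$ to the representing measure $\mu$ — i.e. making the GNS/spectral-theorem construction rigorous and in particular proving boundedness of the multiplication operators $X_i$ on the completed Hilbert space. This is precisely where the Archimedean hypothesis (as opposed to mere compactness of $K$) is indispensable: it is what upgrades a formally positive functional on a polynomial ring to one that is a bona fide integration functional, and it is the step that fails for Schmüdgen-type statements without the Archimedean assumption. A secondary technical nuisance is justifying the separation step, since $\mathbb{R}[x]$ is infinite-dimensional; I would handle this either by equipping it with the finest locally convex topology (under which $M$, being Archimedean, is shown to have interior) or by a direct limiting argument over the finite-dimensional subspaces $\mathbb{R}[x]_{\leq d}$.
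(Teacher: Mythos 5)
The paper itself offers no proof of this statement: Theorem \ref{thm:Putinar} is quoted as a known background result, with its proof deferred to the cited literature (e.g.\ the surveys of Laurent and Lasserre), so there is nothing internal to compare your argument against. Your outline is the classical functional-analytic proof due to Putinar and is essentially sound: the Archimedean bound $c\pm p\in\mathrm{QM}(g_1,\dots,g_m)$ makes $1$ an algebraic interior (order-unit) point of the cone, so a hypothetical $f\notin\mathrm{QM}(g_1,\dots,g_m)$ with $f>0$ on $K$ can be separated by a functional $L$ with $L\geq 0$ on the module, $L(1)=1$, $L(f)\leq 0$; the GNS construction then gives commuting self-adjoint multiplication operators with $\|X_i\|\leq\sqrt{N}$ (via $(N-\sum_i x_i^2)p^2\in\mathrm{QM}$), whose joint spectral measure is a probability measure $\mu$ of compact support representing $L$; the constraints $L(g_iq^2)\geq 0$ force $\mathrm{supp}\,\mu\subseteq K$, whence $L(f)=\int f\,d\mu\geq\min_K f>0$, the desired contradiction. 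You are also right that compactness of $K$ is implied by the Archimedean hypothesis. Two points to make the write-up airtight: the separation step should explicitly invoke the core-point (Eidelheit) form of Hahn--Banach, since $\mathrm{QM}(g_1,\dots,g_m)$ need not be closed and the finest locally convex topology alone does not give it interior without the order-unit argument you sketch; and the density step in the support argument should use that $\mathrm{supp}\,\mu$ is compact (it lies in $[-\sqrt{N},\sqrt{N}]^n$ by the operator norm bounds), approximating $\sqrt{h}$ uniformly by polynomials before squaring. For completeness, there are also purely algebraic routes (Jacobi's representation theorem, or Schweighofer's reduction to P\'olya's theorem) that avoid the spectral theorem, but your approach is the standard one and, fully detailed, proves the theorem as stated.
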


In other words, if the quadratic module of the polynomial constraints in the polynomial optimization problem is Archimedean it is possible to solve the polynomial optimization problem using
\[
\begin{array}{rl}
\rho^* = \sup & \displaystyle  \lambda \\
\textnormal{subject to} & \displaystyle  \lambda-f(X) = \sigma_0 + \sum_{i=1}^m \sigma_ig_i, \\
& \displaystyle \lambda \in \mathbb{R}, \\
& \displaystyle  \sigma_0,\dots,\sigma_m \textrm{ are sums of squares},
\end{array}
\]
where $\lambda$ is the actual parameter, and $X$ is a formal indeterminate.

Let $\rho_j$ be the optimal value of the related problem in which the degrees of the sums of squares are bounded:
\[
\begin{array}{rl}
\rho_j = \sup & \displaystyle  \lambda \\
\textnormal{subject to} & \displaystyle  \lambda-f(X) = \sigma_0 + \sum_{i=1}^m \sigma_ig_i \\
& \displaystyle \sigma_0 \textrm{ is a sum of squares of degree at most } j, \\
& \displaystyle  \sigma_i \textrm{ is a sums of squares of degree at most } j-\deg(g_i).
\end{array}
\]
It is easy to see that $\rho^* \geq \rho_j$ for any $j$. Furthermore, the following important convergence result follows directly from Putinar's positivstellensatz (we present a simplified version of Theorem 3.4 page 805 \cite{Lasserre2001}, and Theorem 4.1 page 79 \cite{Lasserre2010}):
\begin{theorem}
Let $f,g_1,\dots,g_m \in \mathbb{R}[x]$, let $K= \{g_1 \geq 0,\dots,g_m \geq 0 \}$ be compact and assume that $\mathrm{QM}(g_1,\dots,g_m)$ is Archimedean. Assume that $\rho_*$ and $\rho^*$ defined as above are finite and let $\rho_j$ denote the optimal value of the $j$th restriction defined above. Then $\rho_j \rightarrow \rho_*=\rho^*$ as $j \rightarrow \infty$.
\end{theorem}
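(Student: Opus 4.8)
The plan is to split the claim into the duality equality $\rho_* = \rho^*$ and the convergence $\rho_j \to \rho^*$, using the same two ingredients throughout: evaluating certificates at points of $K$ for the ``$\le$'' directions, and Putinar's Positivstellensatz for the ``$\ge$'' directions. First I would record the easy inequalities. If $f - \lambda = \sigma_0 + \sum_{i=1}^m \sigma_i g_i$ with the $\sigma_i$ sums of squares, then for any $x \in K$ each $\sigma_i(x) \ge 0$ and each $g_i(x) \ge 0$, so $f(x) - \lambda \ge 0$; hence every dual-feasible $\lambda$ underestimates $f$ on $K$, giving $\rho^* \le \rho_*$, and the same computation with the degree-restricted certificates gives $\rho_j \le \rho^*$ for every $j$. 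A certificate admissible at truncation level $j$ is also admissible at level $j+1$ (the degree caps $j$ and $j - \deg g_i$ only grow with $j$), so $(\rho_j)_j$ is nondecreasing, and being bounded above by $\rho^*$ it converges to some $\rho_\infty$ with $\rho_\infty \le \rho^* \le \rho_*$. The finiteness of $\rho_*$ forces $K$ to be nonempty, and the Archimedean hypothesis already entails that $K$ is compact.

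The one substantive step is strong duality. Fix $\lambda < \rho_*$. Since $K$ is compact and $f$ continuous, $\rho_* = \min_{x\in K} f(x)$ is attained, so $f(x) - \lambda \ge \rho_* - \lambda > 0$ for all $x \in K$, i.e. $f - \lambda$ is strictly positive on $K$. As $\mathrm{QM}(g_1,\dots,g_m)$ is Archimedean, Theorem \ref{thm:Putinar} yields sums of squares $\sigma_0,\dots,\sigma_m$ with $f - \lambda = \sigma_0 + \sum_{i=1}^m \sigma_i g_i$, so $\lambda$ is dual-feasible and $\rho^* \ge \lambda$. Letting $\lambda$ tend to $\rho_*$ from below gives $\rho^* \ge \rho_*$, hence $\rho_* = \rho^*$.

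To upgrade this to convergence of the truncations, fix $\varepsilon > 0$ and apply the previous step with $\lambda = \rho_* - \varepsilon$: this produces a \emph{fixed} representation $f - (\rho_*-\varepsilon) = \sigma_0 + \sum_i \sigma_i g_i$ whose sums of squares have some finite degrees. Setting $j^\star = \max\bigl(\deg\sigma_0,\ \max_i(\deg\sigma_i + \deg g_i)\bigr)$, this representation meets every degree restriction defining $\rho_j$ as soon as $j \ge j^\star$, so $\rho_j \ge \rho_* - \varepsilon$ and hence $\rho_\infty \ge \rho_* - \varepsilon$; since $\varepsilon$ was arbitrary, $\rho_\infty \ge \rho_*$, and together with $\rho_\infty \le \rho^* = \rho_*$ this gives $\rho_j \to \rho_\infty = \rho_* = \rho^*$. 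The only genuinely nontrivial ingredient here is Theorem \ref{thm:Putinar}; everything else is the ordering of the optimization problems plus degree bookkeeping. The point to watch — and the main obstacle in spirit — is that Putinar's theorem provides, for each fixed $\lambda < \rho_*$, a certificate of \emph{some} finite but a priori uncontrolled degree: this is exactly enough for the qualitative limit $\rho_j \to \rho_*$ and nothing more, so no convergence rate or uniform degree bound should be read into the argument.
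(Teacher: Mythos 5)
Your argument is correct, and it is exactly the elaboration of what the paper only asserts in passing (the paper does not give a proof; it states that the convergence ``follows directly from Putinar's positivstellensatz'' and cites Lasserre): weak duality and monotonicity of the truncations give $\rho_j \nearrow \rho_\infty \le \rho^* \le \rho_*$, and Putinar's Positivstellensatz applied to $f-\lambda$ for $\lambda < \rho_*$ gives a finite-degree certificate that simultaneously closes the gap $\rho^* \ge \rho_*$ and, once $j$ exceeds the degree of that certificate, forces $\rho_j \ge \rho_* - \varepsilon$. One cosmetic note: the paper's preliminary displays write $\lambda - f$ where $f - \lambda$ is intended (the later SDP formulation of $\rho_j$ uses $f(X)-\lambda$), and you have read it the right way around; also, when matching $j^\star$ to the truncation caps $\deg\sigma_0 \le 2\lfloor j/2\rfloor$ and $\deg\sigma_i \le 2\lfloor (j-\deg g_i)/2\rfloor$ one may need $j \ge j^\star+1$ to absorb parity, which changes nothing in the limit.
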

The family of relaxations $\rho_j$ is often referred to as the Lasserre hierarchy.

Let $v_d$ denote the vector of all monomials of degree less than or equal to $d$. $\sigma$ is a sum of squares of degree $2d$ if and only if it is possible to find a positive semidefinite matrix $Q$ such that $\sigma = v_d^T Q v_d$. It follows that $\rho_j$ can be obtained by 
\[ 
\begin{array}{rl}
\rho_j =  \displaystyle \sup_{\lambda,\{ Q_i \}}  & \lambda \\
\textnormal{subject to} &  \displaystyle  f(X) - \lambda = v_{ \lfloor \frac{j}{2} \rfloor }^TQ_0v_{ \lfloor \frac{j}{2} \rfloor} + \sum_{i=1}^m v_{ \lfloor \frac{j- \deg(g_i)}{2} \rfloor}^TQ_iv_{  \lfloor \frac{j - \deg(g_i)}{2} \rfloor} g_i(X), \\
& \displaystyle  Q_i \succeq 0 \textnormal{ for } i = 0,1,\dots,m.
\end{array}
\]
We rewrite the problem in order to get it on the generic form of a semidefinite program. If we let $h(X)= \sum_\alpha [h(X)]_\alpha X^\alpha$, we see that 
\begin{equation}
\label{eq:dual3}
\begin{array}{rl}
\rho_j = \displaystyle \sup_{\lambda, \{Q_i\}}  & \lambda \\
\textnormal{subject to} & [\displaystyle f(X) - v_{ \lfloor \frac{j}{2} \rfloor }^TQ_0v_{ \lfloor \frac{j}{2} \rfloor} + \sum_{i=1}^m v_{ \lfloor \frac{j- \deg(g_i)}{2} \rfloor}^TQ_iv_{  \lfloor \frac{j - \deg(g_i)}{2} \rfloor} g_i(X)]_0 = \lambda, \\
& \displaystyle [f(X) - v_{ \lfloor \frac{j}{2} \rfloor }^TQ_0v_{ \lfloor \frac{j}{2} \rfloor} + \sum_{i=1}^m v_{ \lfloor \frac{j- \deg(g_i)}{2} \rfloor}^TQ_iv_{  \lfloor \frac{j - \deg(g_i)}{2} \rfloor} g_i(X)]_\alpha = 0 \\
& \textrm{ for all } \alpha, \\
& \displaystyle  Q_i \succeq 0 \textnormal{ for } i = 0,1,\dots,m. \\
\end{array}
\end{equation}
Since  $[f(X) - v_{ \lfloor \frac{j}{2} \rfloor }^TQ_0v_{ \lfloor \frac{j}{2} \rfloor} + \sum_{i=1}^m v_{ \lfloor \frac{j- \deg(g_i)}{2} \rfloor}^TQ_iv_{  \lfloor \frac{j - \deg(g_i)}{2} \rfloor} g_i(X)]_\alpha$ is a linear polynomial in the entries of the matrices $Q_i$ this is indeed a semidefinite program. 

 Any numerical solution to the semidefinite program includes a certificate based on positive semidefinite matrices. The certificate can be translated into a sum of squares based certificate. The sum of squares based certificate attained additionally serves as a lower bound to the original polynomial optimization problem since $\rho_* = \rho^* \geq \rho_j$. Sum of squares based certificates are very easy to check by hand, which is one of the major advantages of this method.

\subsection{Exploiting symmetries in semidefinite programming}
\label{sec:Sym}
This subsection can be skipped for the reader who just wants to understand the final proofs as these methods were just necessary to find the numerical solutions that allowed us to find the final certificates. The main result we review in this section is Theorem \ref{thm:Schrijver} as it is one of the strongest results when working on semidefinite programs with symmetries. We discuss the limitations of this Theorem when applied to our specific problem in Section \ref{sec:methodsdensity}. Because of these limitations we would barely gain anything by using Theorem \ref{thm:Schrijver} instead of the easier Lemma \ref{lem:groupaverage}. Although the lemma is a very basic result in representation theory, the numerical calculations done in this article would not have been possible without this lemma. 

Let  $C$ and $A_1, \dots, A_m$ be real symmetric matrices and let $b_1,\dots,b_m$ be real numbers. In this section the objective is to reduce the order of the matrices in the semidefinite programming problem
\[ \max \{\mathrm{tr}(CX) ~ | ~X \textrm{ positive semidefinite},  \mathrm{tr}(A_i X)=b_i \textrm{ for } i = 1, \dots, m\} \]
when it is invariant under the actions of a group.

As in \cite{KPSchrijver2007} and \cite{Klerk2011}, we use a $\ast$--representation to reduce the dimension of the matrices. For  a survey on $\ast$--algebras we refer to the book by Takesaki \cite{Takesaki2002}. The method we use as well as other efficient methods for invariant semidefinite programs are discussed in \cite{Bachoc2012}. Other important recent contributions include \cite{Kanno2001, Gatermann2004, Vallentin2009, Maehara2010, Murota2010, Riener2013}.

\begin{definition}
A \emph{matrix $\ast$-algebra} is a collection of matrices that is closed under addition, scalar and matrix multiplication, and transposition.
\end{definition}

Let $G$ be a finite group, which acts on a finite set $Z$ and let $S_{|Z|}$ be the group of all permutations of $Z$. Let $h$ be a homomorphism $h : G \rightarrow S_{|Z|}$ that takes any element $g \in G$ to a permutation $h_g=h(g)$ of $Z$ for which  $h_{g g'}=h_{g}h_{g'}$ and $h_{g^{-1}}=h_{g}^{-1}$. For every permutation $h_{g}$ we element-wise define the corresponding permutation matrix $M_{g} \in \{0,1\}^{|Z| \times |Z|}$ by
\[
(M_{g})_{i,j}=
\left\{
\begin{array}{rl}
 1 & \textrm{ if } h_{g}(i )= j, \\ 
 0 & \textrm{otherwise} 
\end{array}
\right.
\]
for all $i,j \in Z$.
The span of these matrices is the following matrix $\ast$-algebra
\[
\mathcal{A} = \left\{ \sum_{g \in G} \lambda_g M_{g} ~ | ~ \lambda_g \in \mathbb{R} \right\}.
\]
The matrices $X$ that commute with all permutation matrices are the \emph{invariant matrices} of $G$. The collection of all such matrices, 
\[
\mathcal{A'} = \{X \in \mathbb{R}^{n \times n} | XM=MX \textrm{ for all } M \in \mathcal{A} \},
\]
is called the \emph{commutant} of $\mathcal{A}$, and it is again a $\ast$-algebra. Denote the dimension of the commutant by $d=\dim\mathcal{A'}$.

Let $J$ be the matrix of size $|Z| \times |Z|$ of ones. The commutant has a basis of $\{0,1\}$-matrices $E_1,\dots,E_d$ such that $\sum_{i=1}^d E_i = J$.

We form a new normalized basis by
\[
B_i = \frac{1}{\sqrt{tr(E_i^TE_i)}}E_i 
\]
for which $\mathrm{tr}(B_i^TB_j) = \delta_{i,j}$ where $\delta_{i,j}$ is the Kronecker delta.

The \emph{multiplication parameters} $\lambda_{i,j}^k$ are then defined by 
\[B_iB_j = \sum_{k=1}^d \lambda_{i,j}^kB_k \]
for $i,j,k = 1,\dots,d$.

We define $d\times d$-matrices $L_1,\dots,L_d$ by
\[
(L_k)_{i,j} = \lambda_{k,j}^i 
\]
for $i,j,k =1,\dots,d$, which span
\[ 
\mathcal{L}=\{\sum_{i=1}^d x_iL_i : x_1,\dots,x_d \in \mathbb{R} \}.
\]

\begin{theorem}[\cite{KPSchrijver2007}]
The linear function $\phi : \mathcal{A'} \rightarrow \mathbb{R}^{d \times d}$ defined by $\phi(B_i) = L_i$ for $i=1,\dots,d$ is a bijection. The linear function additionally satisfies $\phi(XY)=\phi(X)\phi(Y)$ and $\phi(X^T)=\phi(X)^T$ for all $X,Y \in \mathcal{A}'$.
\end{theorem}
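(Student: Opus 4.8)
The plan is to recognize $\phi$ as the \emph{left regular representation} of the finite-dimensional real algebra $\mathcal{A}'$ and to read off all three assertions from that identification. First equip $\mathcal{A}'$ with the trace inner product $\langle X,Y\rangle = \operatorname{tr}(X^TY)$, with respect to which $B_1,\dots,B_d$ is an orthonormal basis by construction. For $Y\in\mathcal{A}'$ let $\ell_Y\colon\mathcal{A}'\to\mathcal{A}'$ be the linear map $Z\mapsto YZ$ (well defined because $\mathcal{A}'$ is closed under multiplication). Writing $\ell_{B_k}(B_j)=B_kB_j=\sum_i\lambda_{kj}^iB_i$ and comparing with the definition $(L_k)_{ij}=\lambda_{kj}^i$ shows that $L_k$ is exactly the matrix of $\ell_{B_k}$ in the basis $B_1,\dots,B_d$; by linearity of $Y\mapsto\ell_Y$ this gives that $\phi(X)$ is the matrix of $\ell_X$ for every $X\in\mathcal{A}'$. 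This identification is really the whole content; everything else is short.

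Granting it, multiplicativity is immediate: associativity of matrix multiplication gives $\ell_{XY}=\ell_X\circ\ell_Y$, and passing to matrices in the basis $\{B_i\}$ yields $\phi(XY)=\phi(X)\phi(Y)$. For injectivity, note that the identity matrix $I$ commutes with every permutation matrix $M_g$, hence $I\in\mathcal{A}'$; if $\phi(X)=0$ then $\ell_X=0$, so in particular $X=\ell_X(I)=XI=0$. Thus $\phi$ is injective, and since $\dim\mathcal{A}'=d$ while $\operatorname{im}\phi\subseteq\mathcal{L}=\operatorname{span}\{L_1,\dots,L_d\}$ is spanned by $d$ matrices, $\phi$ is a linear isomorphism of $\mathcal{A}'$ onto $\mathcal{L}$ (equivalently, $L_1,\dots,L_d$ is a basis of $\mathcal{L}$) — this is the asserted bijectivity.

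It remains to establish $\phi(X^T)=\phi(X)^T$, and I expect this to be the step requiring the most care, since it is the only place one must use a special property of the distinguished basis and not merely formal algebra. The $\{0,1\}$-matrices $E_1,\dots,E_d$ are the indicator matrices of the orbits of $G$ on $Z\times Z$ (this is forced by $\mathcal{A}'$ consisting of the matrices constant on those orbits, together with the minimality/indecomposability of the $E_i$). Transposition carries the orbit of $(a,b)$ to the orbit of $(b,a)$, hence permutes $E_1,\dots,E_d$ by an involution $i\mapsto i^*$ with $E_i^T=E_{i^*}$; since transposition preserves the number of nonzero entries the normalizing constants agree, so $B_i^T=B_{i^*}$. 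Now compute the structure constants through the inner product: $\lambda_{ij}^l=\langle B_iB_j,B_l\rangle=\operatorname{tr}(B_j^TB_i^TB_l)=\operatorname{tr}(B_j^TB_{i^*}B_l)=\langle B_j,B_{i^*}B_l\rangle=\lambda_{i^*l}^j$. Reading off entries, $(L_i^T)_{jl}=(L_i)_{lj}=\lambda_{ij}^l=\lambda_{i^*l}^j=(L_{i^*})_{jl}$, so $L_i^T=L_{i^*}=\phi(B_i^T)$, and extending linearly gives $\phi(X^T)=\phi(X)^T$ for all $X\in\mathcal{A}'$. The one genuine input beyond bookkeeping is the fact that transposition permutes the chosen basis $\{E_i\}$: without the orbit picture (or an equivalent argument that each $E_i$ is an indecomposable $\{0,1\}$-matrix of $\mathcal{A}'$) one only learns that $\{B_i^T\}$ is \emph{some} orthonormal basis of $\mathcal{A}'$, which is not enough to match the matrix entries above.
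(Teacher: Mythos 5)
The paper does not prove this theorem; it is cited directly from \cite{KPSchrijver2007}, so there is no in-paper argument to compare against, and I will assess your proposal on its own merits.

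Your proof is correct and is the standard ``regular $\ast$-representation'' argument, which is also the route taken in the cited source. The identification of $L_k$ as the matrix of left multiplication $\ell_{B_k}$ in the orthonormal basis $\{B_i\}$ is exactly right: $\ell_{B_k}(B_j)=\sum_i\lambda_{kj}^iB_i$ matches $(L_k)_{ij}=\lambda_{kj}^i$. From there, multiplicativity is associativity, and injectivity follows from $I\in\mathcal{A}'$ (so $\ell_X=0\Rightarrow X=\ell_X(I)=0$). You are also right to read ``bijection'' as a linear isomorphism onto $\mathcal{L}=\mathrm{span}\{L_1,\dots,L_d\}$; surjectivity onto all of $\mathbb{R}^{d\times d}$ is impossible on dimension grounds ($\dim\mathcal{A}'=d$), so the paper's phrasing is loose and your interpretation is the intended one.

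The transpose compatibility is indeed the only step that requires a property of the distinguished basis rather than formal algebra, and you identify and supply it correctly. The $E_i$ are precisely the orbit indicator matrices of $G$ acting on $Z\times Z$: since every matrix in $\mathcal{A}'$ is constant on these orbits, each $\{0,1\}$-matrix $E_i\in\mathcal{A}'$ is a union of orbits, and the conditions $\sum_iE_i=J$ together with $\dim\mathcal{A}'=d$ (the number of orbits) force each $E_i$ to be a single orbit. Transposition swaps $(a,b)\leftrightarrow(b,a)$ and hence permutes orbits, giving the involution $i\mapsto i^*$ with $E_i^T=E_{i^*}$; the orbit sizes match, so $B_i^T=B_{i^*}$. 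The structure-constant identity $\lambda_{ij}^l=\mathrm{tr}(B_l^TB_iB_j)=\mathrm{tr}(B_j^TB_{i^*}B_l)=\lambda_{i^*l}^j$ then yields $L_i^T=L_{i^*}=\phi(B_i^T)$, and linearity finishes it. No gaps.
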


\begin{corollary}[\cite{KPSchrijver2007}]
$\sum_{i=1}^d x_iB_i$ is positive semidefinite if and only if $\sum_{i=1}^d x_iL_i$ is positive semidefinite.
\label{cor:schrijver}
\end{corollary}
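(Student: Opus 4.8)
The plan is to read the corollary off from the preceding theorem, which says that $\phi$ is an isomorphism of $\ast$-algebras: a linear bijection onto $\mathcal{L}=\{\sum_i x_iL_i\}$ satisfying $\phi(XY)=\phi(X)\phi(Y)$ and $\phi(X^T)=\phi(X)^T$. The conceptual input I would isolate first is that, for a finite-dimensional unital $\ast$-subalgebra $\mathcal{B}$ of a real matrix algebra, positive semidefiniteness of a symmetric element is intrinsic to the algebra, in the sense that a symmetric $X\in\mathcal{B}$ is positive semidefinite if and only if $X=Y^TY$ for some $Y\in\mathcal{B}$. Once this is available, the two implications of the corollary are immediate: if $X=\sum_i x_iB_i\in\mathcal{A}'$ is positive semidefinite, write $X=Y^TY$ with $Y\in\mathcal{A}'$ and apply $\phi$ to get $\sum_i x_iL_i=\phi(X)=\phi(Y)^T\phi(Y)\succeq 0$; conversely, if $\sum_i x_iL_i=\phi(X)\succeq 0$, apply the same characterization inside $\mathcal{L}=\phi(\mathcal{A}')$ and use injectivity of $\phi$ to pull the factorization $\phi(X)=\phi(Y)^T\phi(Y)$ back to $X=Y^TY\succeq 0$ in $\mathcal{A}'$.

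To make the converse step legitimate I would first check that $\mathcal{L}$ is again a finite-dimensional unital $\ast$-subalgebra, with unit $\phi(I)=I_d$; this holds because the matrices $L_i$ are by construction the matrices of left multiplication by the $B_i$ on the trace-orthonormal basis $B_1,\dots,B_d$, so $\phi$ is nothing but the left regular representation of $\mathcal{A}'$, under which the identity element acts as the identity operator. Then I would prove the intrinsic characterization itself: the implication $X=Y^TY\Rightarrow X\succeq 0$ is trivial, and for the other direction one takes the principal square root $X^{1/2}$ (symmetric, with $X=(X^{1/2})^TX^{1/2}$) and argues that it lies in $\mathcal{B}$, using that $\sqrt{\cdot}$ is a uniform limit of polynomials $p_n$ on an interval $[0,\|X\|]$ containing the spectrum of $X$, that $p_n(X)\in\mathcal{B}$ for any polynomial since $\mathcal{B}$ is a unital algebra, and that $\mathcal{B}$ is topologically closed because it is finite-dimensional, so that $p_n(X)\to X^{1/2}\in\mathcal{B}$ by the spectral theorem.

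I expect the main obstacle to be precisely this last limiting argument — verifying that the square root does not leave the algebra — while everything else is bookkeeping with the homomorphism properties of $\phi$. If one prefers to avoid functional calculus, an alternative is to invoke spectral permanence for $\ast$-algebras: a $\ast$-isomorphism preserves the spectrum of a self-adjoint element, and a symmetric real matrix is positive semidefinite exactly when its spectrum is contained in $[0,\infty)$, so $X$ and $\phi(X)$ are positive semidefinite simultaneously. Either way the corollary follows from the preceding theorem with no new computation.
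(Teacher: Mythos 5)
Your argument is correct, and since the paper itself only cites \cite{KPSchrijver2007} without reproducing a proof, the right comparison is with that reference; your route (``a $\ast$-isomorphism preserves positivity because positivity is intrinsic to a unital $\ast$-algebra'') is precisely the mechanism the original authors rely on. A few remarks that sharpen the argument without changing it: for a symmetric PSD matrix $X$ with finitely many eigenvalues, the principal square root is an honest polynomial in $X$ (Lagrange-interpolate $\sqrt{\cdot}$ through the eigenvalues, taking the value $0$ at $0$), so the uniform-approximation step and the appeal to topological closedness of $\mathcal{B}$ can be dropped entirely --- $X^{1/2}=p(X)\in\mathcal{B}$ immediately because $\mathcal{B}$ is a unital algebra. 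Your verification that $\phi(I)=I_d$ via the observation that $\phi$ is the left regular representation is the genuinely necessary check, and it is correct: $(L_k)_{i,j}=\lambda^i_{k,j}$ is by construction the matrix of left multiplication by $B_k$ in the trace-orthonormal basis, so left multiplication by $I=\sum_k c_kB_k$ is the identity operator and $\phi(I)=I_d$; the same orthonormality is what makes $\phi(X^T)=\phi(X)^T$ hold, since $\mathrm{tr}((X^TA)^TB)=\mathrm{tr}(A^T(XB))$. Your spectral-permanence alternative is also valid: because $\mathcal{A}'$ is a finite-dimensional unital subalgebra, Cayley--Hamilton shows that a matrix in $\mathcal{A}'$ is invertible in the ambient algebra iff it is invertible in $\mathcal{A}'$, so the algebra-spectrum equals the matrix-spectrum on both sides and is preserved by the unital isomorphism $\phi$; for a symmetric element, PSD is equivalent to nonnegative spectrum, giving the corollary in two lines. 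Either version is a faithful reconstruction of the cited result.
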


The next lemma shows that we can find $X \in \mathcal{A}'$. It is then possible to use Corollary \ref{cor:schrijver} to reduce the order of the semidefinite constraint.

\begin{lemma}
\label{lem:groupaverage}
There is a solution $X \in \mathcal{A}'$ to a $G$-invariant semidefinite program
\[ 
\max \{\mathrm{tr}(CX) ~ | ~X \textrm{ positive semidefinite},  \mathrm{tr}(A_i X)=b_i \textrm{ for } i = 1, \dots, m\}. 
\]
\end{lemma}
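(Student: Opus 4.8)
The plan is to use a standard group-averaging (Reynolds operator) argument. Suppose $X_0$ is any optimal solution to the semidefinite program; such a solution exists whenever the program is feasible and bounded, which we may assume. First I would define the averaged matrix
\[
\overline{X} = \frac{1}{|G|}\sum_{g \in G} M_g X_0 M_g^T,
\]
where $M_g$ are the permutation matrices spanning $\mathcal{A}$ as in the text. The argument then has three routine steps: (i) $\overline{X}$ is positive semidefinite, since it is a nonnegative combination of the matrices $M_g X_0 M_g^T$, each of which is congruent to the positive semidefinite matrix $X_0$ (note $M_g^T = M_g^{-1}$, so $M_g X_0 M_g^T = M_g X_0 M_g^{-1}$ is similar to $X_0$ and hence has the same eigenvalues); (ii) $\overline{X}$ is feasible, i.e. $\mathrm{tr}(A_i \overline{X}) = b_i$ for all $i$ — this is where $G$-invariance of the program is used: the hypothesis that the program is $G$-invariant means precisely that the constraint data is preserved under the substitution $X \mapsto M_g X M_g^T$, so $\mathrm{tr}(A_i M_g X_0 M_g^T) = \mathrm{tr}(A_i X_0) = b_i$ for each $g$, and averaging preserves the value; (iii) $\overline{X}$ is optimal, by the same invariance applied to the objective: $\mathrm{tr}(C M_g X_0 M_g^T) = \mathrm{tr}(C X_0)$ for each $g$, so $\mathrm{tr}(C\overline{X}) = \mathrm{tr}(C X_0)$, which is the optimal value.

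Finally I would check that $\overline{X} \in \mathcal{A}'$, i.e. that $\overline{X}$ commutes with every $M \in \mathcal{A}$. Since $\mathcal{A}$ is spanned by the $M_h$, it suffices to show $M_h \overline{X} = \overline{X} M_h$ for each $h \in G$. This follows from a reindexing of the averaging sum:
\[
M_h \overline{X} M_h^T = \frac{1}{|G|}\sum_{g \in G} M_h M_g X_0 M_g^T M_h^T = \frac{1}{|G|}\sum_{g \in G} M_{hg} X_0 M_{hg}^T = \overline{X},
\]
using $M_h M_g = M_{hg}$ and the fact that $g \mapsto hg$ is a bijection of $G$; hence $M_h \overline{X} = \overline{X} M_h$, as $M_h$ is orthogonal. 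Combining, $\overline{X}$ is an optimal, feasible, positive semidefinite solution lying in $\mathcal{A}'$, which proves the lemma.

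There is no serious obstacle here; the only point requiring care is making the phrase ``$G$-invariant semidefinite program'' precise, namely that invariance is exactly the statement that $C$ and the constraint set $\{(A_i,b_i)\}$ are unchanged under conjugation by the $M_g$, so that steps (ii) and (iii) go through. Everything else is the classical averaging trick.
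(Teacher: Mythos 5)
Your proposal is correct and is essentially the paper's own proof: both apply the group-averaging (Reynolds operator) argument to an optimal $X_0$, deduce feasibility and optimality of $\overline{X}$ from the $G$-invariance of $C$ and the $A_i$ (the paper phrases this as $C,A_i$ commuting with each $M_g$, which is exactly your ``substitution $X\mapsto M_gXM_g^T$ preserves the data''), and observe that $\overline{X}$ lies in $\mathcal{A}'$. You are slightly more explicit than the paper about the positive-semidefiniteness of $\overline{X}$ and the reindexing showing $\overline{X}\in\mathcal{A}'$ (the paper dismisses the latter as ``easy to see''), but the argument is the same.
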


\begin{proof}

Let $C,A_1,\dots,A_m$ be $|Z| \times |Z|$ matrices commuting with $M_g$ for all $g \in G$. If $X$ is an optimal solution to the optimization problem then the group average, $X'=\frac{1}{|G|} \sum_{g \in G}M_g X M_g^T$, is also an optimal solution:
It is feasible since 
\[
\begin{array}{rl}
\displaystyle \mathrm{tr}(A_jX') &= \displaystyle  \mathrm{tr}(A_j\frac{1}{|G|} \sum_{g \in G}M_g X M_g^T) \\
&= \displaystyle  \mathrm{tr}(\frac{1}{|G|} \sum_{g \in G}M_g A_jX M_g^T) \\
&= \displaystyle  \mathrm{tr}(\frac{1}{|G|} \sum_{g \in G}A_jX ) \\
&= \displaystyle  \mathrm{tr}(A_jX),
\end{array}
\]
where we have used that the well-known fact that the trace is invariant under change of basis. By the same argument  $\mathrm{tr}(CX') =\mathrm{tr}(CX)$, which implies that $X'$ is optimal. It is easy to see that $X' \in \mathcal{A}'$.
\end{proof}

All in all we get the following theorem:

\begin{theorem}[\cite{KPSchrijver2007}]
\label{thm:Schrijver}
The semidefinite program
\[
\max \{\mathrm{tr}(CX) ~ | ~X \succeq 0,  \mathrm{tr}(A_i X)=b_i \textrm{ for } i = 1, \dots, m\} 
\]
has a solution $X = \sum_{i=1}^d x_iB_i$ that can be obtained by
\[
\max \{\mathrm{tr}(CX) ~ | ~ \sum_{i=1}^d x_iL_i \succeq 0, \mathrm{tr}(A_i \sum_{j=1}^d B_jx_j)=b_i \textrm{ for } i = 1, \dots, m\}.
\]
\end{theorem}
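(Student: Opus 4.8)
The plan is to assemble the statement from the two results that precede it, namely Lemma \ref{lem:groupaverage} and Corollary \ref{cor:schrijver}; almost no new work is needed. First I would invoke Lemma \ref{lem:groupaverage} to produce an optimal solution $X^\star$ of the original semidefinite program that lies in the commutant $\mathcal{A}'$ (this is where $G$-invariance is used, i.e. that $C,A_1,\dots,A_m$ commute with every permutation matrix $M_g$). Since $B_1,\dots,B_d$ is a basis of $\mathcal{A}'$, we may write $X^\star=\sum_{i=1}^d x_i^\star B_i$ for unique real numbers $x_1^\star,\dots,x_d^\star$. Hence the maximum of the original program is attained at a point of the form $\sum_i x_iB_i$, and therefore equals the supremum of $\mathrm{tr}(C\sum_j B_jx_j)$ over those tuples $(x_1,\dots,x_d)\in\mathbb{R}^d$ for which $\sum_i x_iB_i$ is feasible for the original program.

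Next I would rewrite the feasibility conditions in terms of the coordinates $x_1,\dots,x_d$. The affine constraints $\mathrm{tr}(A_iX)=b_i$ become $\mathrm{tr}(A_i\sum_j B_jx_j)=b_i$ verbatim, and these remain linear in $(x_1,\dots,x_d)$. For the conic constraint I would apply Corollary \ref{cor:schrijver}, which says that $\sum_i x_iB_i\succeq 0$ if and only if $\sum_i x_iL_i\succeq 0$ (this rests on $\phi$ being a $\ast$-isomorphism, hence preserving positive semidefiniteness). Substituting this equivalence does not change the set of feasible $(x_1,\dots,x_d)$, so the optimal value of the reduced program equals that expressed at the end of the previous paragraph, which is the optimal value of the original program. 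Moreover any maximizer $(x_1,\dots,x_d)$ of the reduced program yields $X=\sum_i x_iB_i\in\mathcal{A}'$, which by the same equivalence is feasible for and optimal in the original program; this is the asserted solution.

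I do not anticipate a real obstacle, since the genuine content is already packaged in the cited Lemma and Corollary. The only point that deserves an explicit line is the inequality ``reduced optimum $\ge$ original optimum'': one must observe that every feasible point of the reduced program gives, via $X=\sum_i x_iB_i$, a feasible point of the original program of the same objective value, so passing to the reduced program neither relaxes nor strengthens it; combined with Lemma \ref{lem:groupaverage}, which supplies the reverse inequality by exhibiting an optimal original solution already of this form, the two optimal values coincide and the optimizer transfers back as stated.
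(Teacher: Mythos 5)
Your proposal matches the paper's intended argument exactly: the paper states Theorem~\ref{thm:Schrijver} immediately after Lemma~\ref{lem:groupaverage} and Corollary~\ref{cor:schrijver} with the remark that the lemma supplies a solution in $\mathcal{A}'$ and the corollary then lets one replace $\sum_i x_iB_i\succeq 0$ by $\sum_i x_iL_i\succeq 0$, which is precisely your two-step reduction. Nothing further is needed.
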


When $d$ is smaller than $|Z|$ the theorem can be used to reduce the size of the linear matrix inequality, which in turn improves the computational efficiency. This allows us to solve many problems that would otherwise not be possible.

\section{Gray code, bubble language and fixed-density necklaces}
\label{sec:graycode}
The original Gray code was used by Frank Gray in a patent filed in 1947 and granted in 1953 \cite{Gray1953}. For a background on Gray codes we refer to Knuth's Art of Computer Programming \cite{Knuth2005}. In this chapter we present some recent development on a Gray code for fixed-density necklaces by Ruskey, Sawada and Wiliams. These new methods are useful for counting arithmetic progressions, and the author's contribution based on these methods can be found in Section \ref{sec:numerical2}.

To represent all subsets $S \subset \mathbb{Z}_n$ of density $\delta = |S|/n$ we use binary strings; when for example $n=5$, $k=3$ and $\delta=4/5$ all different possible $S$ are: $11110$, $11101$, $11011$, $10111$, $01111$. All these strings are just rotations of one another. In our application it is enough to consider the lexicographically smallest $01111$. To exclude rotation and generate all sets of fixed density is what is in the literature called to generate all \emph{fixed-density necklaces}. There is software using gray code for listing all fixed-density necklaces in constant amortized time implemented in C \cite{Sawada2012b}, based on theory developed in \cite{Ruskey2012} and \cite{Sawada2012a}. We only cover the material useful for fixed-density necklaces, and refer to the mentioned articles for the most general forms of the theorems.

\begin{definition}
A \emph{gray code} is a generation of all combinatorial possibilities in which only a constant amount of change is required to go from any binary string to the following binary string. If there in addition only is a constant change to go from the last binary string to the first binary string, the gray code is \emph{cyclic}.
\end{definition}

To understand the concept we begin with the example of generating binary $n$-tuples using gray code.

Denote a binary $n$-tuple by $a_{n-1}a_{n-2}\dots a_0$ where $a_i \in \{0,1\}$, and let $a_\infty = \displaystyle \sum_{i=1}^n a_i +1 \mod 2$ denote the parity. We can generate all possible $n$-tuples by only changing one number $a_i$ at the time using the following algorithm:
\begin{itemize}
\item[1.] Initiate with $a_i = 0$ for $i = 0,\dots,n-1$ and $a_\infty = 1$.
\item[2.]  Visit $a_1a_2\dots a_n$.
\item[3.]  If $a_\infty = 1$, set $j=0$, otherwise let $j=\displaystyle \min (i :  a_{i-1}=1)$.
\item[4.]  Terminate if $j=n$, otherwise let $a_j = 1-a_j$ and thus also $a_\infty = 1-a_\infty$ and return to Step 2.
\end{itemize}
For a discussion about why this generates a cyclic gray code, and for other algorithms to generate $n$-tuples, we refer to the material by Knuth (subsection 7.2.1.1. pages 1--39 \cite{Knuth2005}), here we just check how the algorithm works for $n=4$: 
\begin{example}
We run through the given algorithm with input $n=4$ to generate all binary $4$-tuples. \\
$0000$, $a_\infty = 1, j=0 \Rightarrow \textnormal{switch }a_0,a_\infty $ \\
$0001$, $a_\infty = 0, j=1 \Rightarrow \textnormal{switch }a_1,a_\infty $ \\
$0011$, $a_\infty = 1, j=0 \Rightarrow \textnormal{switch }a_0,a_\infty $ \\
$0010$, $a_\infty = 0, j=2 \Rightarrow \textnormal{switch }a_2,a_\infty $ \\
$0110$, $a_\infty = 1, j=0 \Rightarrow \textnormal{switch }a_0,a_\infty $ \\
$0111$, $a_\infty = 0, j=1 \Rightarrow \textnormal{switch }a_1,a_\infty $ \\
$0101$, $a_\infty = 1, j=0 \Rightarrow \textnormal{switch }a_0,a_\infty $ \\
$0100$, $a_\infty = 0, j=3 \Rightarrow \textnormal{switch }a_3,a_\infty $ \\
$1100$, $a_\infty = 1, j=0 \Rightarrow \textnormal{switch }a_0,a_\infty $ \\
$1101$, $a_\infty = 0, j=1 \Rightarrow \textnormal{switch }a_1,a_\infty $ \\
$1111$, $a_\infty = 1, j=0 \Rightarrow \textnormal{switch }a_0,a_\infty $ \\
$1110$, $a_\infty = 0, j=2 \Rightarrow \textnormal{switch }a_2,a_\infty $ \\
$1010$, $a_\infty = 1, j=0 \Rightarrow \textnormal{switch }a_0,a_\infty $ \\
$1011$, $a_\infty = 0, j=1 \Rightarrow \textnormal{switch }a_1,a_\infty $ \\
$1001$, $a_\infty = 1, j=0 \Rightarrow \textnormal{switch }a_0,a_\infty $ \\
$1000$, $a_\infty = 0, j=4 \Rightarrow \textnormal{terminate}.$ \\
As claimed all $16$ of the binary $4$-tuples has been generated, and in every step there is only $1$ bit that changes. Further note that to go from $1000$ to $0000$ we also need to change $1$ bit, hence the code in this example is cyclic. 
\end{example}

\begin{definition}
A \emph{bubble language} is a set of binary strings with either of the following properties:
\begin{itemize}
\item[1.] The first $01$ of any string can be switched to $10$ to obtain another string in the set.
\item[2.] The first $10$ of any string can be switched to $01$ to obtain another string in the set.
\end{itemize}
\end{definition}
A wide variety of families of combinatorial objects are bubble languages, we only give a few examples and one proof here, and refer to \cite{Ruskey2012} for omitted proofs and further examples. We consider fixed density necklaces as they represent all possibilities of fixed-density sets in our application. The proof for the following proposition also works for the lexicographically smallest representatives of all necklaces and for Lyndon words (restricting to the aperiodic necklaces).
\begin{proposition}
The set of the lexicographically smallest representatives of fixed-density necklaces is a bubble language.
\end{proposition}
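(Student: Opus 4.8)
The claim is that the set $L$ of lexicographically smallest representatives of fixed-density necklaces on $n$ bits is a bubble language, i.e.\ satisfies one of the two defining properties of a bubble language. I expect property 2 to be the relevant one here: the first occurrence of $10$ in a string can be switched to $01$ and the result stays in $L$ (this matches the intuition that moving a one to the right, filling the leftmost ``gap,'' keeps a string lexicographically small). So the plan is to fix a string $w = a_{n-1}\cdots a_0 \in L$ with density $d$ (so $w$ has exactly $d$ ones), locate the first $10$ factor in $w$, say at positions $(t+1, t)$ so that $a_{t+1}=1$, $a_t=0$, and let $w'$ be the string obtained by setting $a_{t+1}=0$, $a_t=1$. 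Clearly $w'$ still has density $d$. I must show two things: (i) $w'$ is a necklace in the sense of being lexicographically smallest among its rotations, and (ii) more precisely it is the lex-smallest representative of \emph{its} necklace class. Since (i) is exactly the definition of ``lex-smallest representative of its own class,'' it suffices to prove: $w'$ is $\le$ all of its cyclic rotations.

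First I would record the structure of $w$ forced by membership in $L$. Because $w$ is the lex-smallest rotation of itself, $w$ must begin with the longest run of zeros; write $w = 0^r\, 1\, u$ where $r \ge 1$ (assuming $0 < d < n$) and $u$ is a string of length $n-r-1$. The first $10$ of $w$ occurs exactly where the initial block $0^r 1$ meets what follows: since any $1$ preceding the first $0$-after-a-$1$ gives the first $10$, and $w$ starts $0^r 1 \ldots$, the first $10$ is the ``$1$'' in position $r$ (counting the leftmost as position $n-1$) together with the first $0$ appearing after it. So $w = 0^r 1^s 0 \cdots$ with $s \ge 1$, and the swap turns the boundary $1\,0$ into $0\,1$, producing $w' = 0^r 1^{s-1} 0 1 \cdots$ — equivalently it increases the leading zero-run or at least does not create any rotation that is lexicographically smaller than $w'$ itself. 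The key step is to compare $w'$ with each of its rotations $\rho^k(w')$ and show $w' \preceq \rho^k(w')$ lexicographically, using the corresponding inequality $w \preceq \rho^k(w)$ that we already know.

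The heart of the argument is this rotation comparison, and I expect it to be the main obstacle — it requires a careful case analysis according to where the rotation ``cut'' falls relative to the two swapped positions $t+1$ and $t$. For a rotation whose cut lies entirely to the left of position $t+1$ or entirely to the right of position $t$, the rotation of $w'$ differs from the corresponding rotation of $w$ only ``deep inside'' the string at the two swapped coordinates, and one shows the prefix up to the first point of difference already forces $w' \prec \rho^k(w')$ (or they agree up to there and the local swap $01$ vs $10$ breaks the tie in the right direction). The delicate cases are the two rotations that begin exactly at position $t$ or exactly at position $t+1$: here one must use that $w$ started with the \emph{maximal} run of zeros, so the rotation beginning inside the block $1^s$ starts with a $1$ (or a short zero run) and hence cannot beat $w'$, which starts with $0^r$ or $0^{r+1}$. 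I would organize this as: (a) show the leading zero-run of $w'$ has length $\ge r$, with a clean description of when it is $r$ versus $r+1$; (b) for each rotation, bound its leading zero-run by $\le r$ unless it is the ``trivial'' rotation, and in the equality case pass to the next coordinate and invoke minimality of $w$. Finally I would note the degenerate cases $d = 0$ and $d = n$ (no $10$ factor exists, or the statement is vacuous) and the case $w = 0^{n-d} 1^d$ where the swap still lands inside $L$, closing the argument.
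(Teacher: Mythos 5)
Your plan takes essentially the same route as the paper's proof: decompose the necklace as $0^s1^t0\cdots$, swap the first $10$ to $01$, and show the result is still lexicographically smallest among its rotations via a case analysis on where the rotation cut falls relative to the swapped pair. The paper's four cases (rotations starting strictly inside the $0^s1^t$ prefix, the two delicate rotations beginning exactly at the two swapped positions, and rotations starting after them) are exactly the distinctions you flag, so your sketch tracks the published argument.
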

\begin{proof}
We prove that if we switch the first $10$ to a $01$ in any necklace we get another necklace.

Consider a necklace $\alpha = a_1\dots a_n$. If $\alpha \neq 0^d1^{n-d}$, then suppose the first $10$ appears at positions $j$ and $j+1$. The string $\alpha$ must start with $a_1\dots a_i = 0^s1^t$ for some $s,t > 0$, $s+t=j$, because otherwise there would be a lexicographically smaller rotation of $\alpha$. Note also that if we switch the first $10$ to $01$ we get a string $\beta$ which is lexicographically smaller than $\alpha$. We need to check that every rotation of $\beta$ is lexicographically larger than $\beta$ itself. Rotating and switching $a_j$ with $a_{j+1}$ commutes, so to keep the notation simple, let us rotate first. Let $r_i = a_i\dots a_na_1\dots a_{i-1}$ denote the $i$th rotation of $\alpha$, let $w_i$ denote the string we get after swapping $a_j$ with $a_{j+1}$ in $r_i$, and let us consider 4 cases:
\begin{itemize}
\item[1.]  $w_2,\dots,w_{j-1}$: The prefix of $r_i$ is of the form $0^u1^v$ with $u+v=j-i+1$, and is larger than the prefix of $\alpha$. Since $i\leq j-1$ we have $u+v\geq2$ and the swap of $a_j$ and $a_{j+1}$ thus only removes a $1$ in the end of the prefix in both $r_i$ and $\beta$, which does not affect the lexicographic order of the two. We conclude that $\beta$ is lexicographically smaller than $w_1,\dots,w_{j-1}$.
\item[2.] $w_j$: If $s>1$ then $\beta$ starts with $00$  and $\beta$ is thus lexicographically smaller than $w_j$, which starts with $01$. If $s=1$, then since $\alpha$ is lexicographically smaller than $r_j$ we must have $a_{j+1}=0$ and $a_{j+2}\dots a_{2j}=1^t$. Thus we get that $\beta$ starts with $01^{t-1}0$ whereas $w_j$ starts with $01^{t+1}$. It is clear that $\beta$ is lexicographically smaller than $w_j$.
\item[3.] $w_{j+1}$: We note that $w_{j+1}$ starts with a $1$ whereas $\beta$ start with a $0$. $\beta$ is clearly lexicographically smaller than $w_{j+1}$.
\item[4.] $w_{j+2},\dots,w_n$: After the rotation the swap does not occur in the prefix of $r_i$, whereas it makes the prefix of $\alpha$ smaller. Thus since $\alpha$ is lexicographically smaller than $r_i$ we can conclude that $\beta$ is lexicographically smaller than $w_{j+2},\dots,w_n$.
\end{itemize}
\end{proof}

\begin{definition}
In \emph{co-lexicographic (co-lex) order} strings are sorted by increasing value of their last symbol, i.e. lexicographic order read from right to left.
\end{definition}
The co-lex order does not provide a gray code, but it is more intuitive to define how we can recursively generate all binary strings in co-lex order. Once we have defined everything properly it is easy to introduce the cool-lex order, which is just a small modification of co-lex order and provides the gray code we are interested in.

Any binary string can be decomposed in the form $0^s1^tg$ where $g$ is the suffix consisting of all remaining zeroes and ones. We denote the empty suffix by $\epsilon$. We have the following recursive formula for generating all fixed density necklaces in a co-lex order \cite[Page 7]{Ruskey2012}:

\begin{proposition}
Let initially $s=d$, $t=n-d$ and $g= \epsilon$. All necklaces of length $n$ and density $d$ are generated recursively in co-lexicographic order by
\[
\mathscr{L}(s,t,g) = \left\{ \begin{array}{ll}
\displaystyle 0^s1^tg, \mathscr{L}(s-1,1,01^{t-1}g),\dots,\mathscr{L}(s-1,t-j,01^jg) &  \displaystyle  \textrm{ if } s>0, \\
\displaystyle 1^t g & \displaystyle \textrm{ if } s = 0. \end{array} \right.
\]
where $j$ is the minimum value such that $0^{s-1}1^{t-j}01^j g$ is a fixed density necklace.
\end{proposition}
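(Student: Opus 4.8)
The plan is to establish, by induction on $s+t$ (the length of the mutable prefix), an invariant that pins down exactly what $\mathscr{L}(s,t,g)$ outputs: whenever $0^s1^tg$ is itself a fixed-density necklace of length $n$ with $d$ zeros — so that $s+t+|g|=n$ and $s$ plus the number of zeros in $g$ is $d$ — the list $\mathscr{L}(s,t,g)$ contains, each exactly once and in the stated co-lexicographic order, precisely the fixed-density necklaces of length $n$ and density $d$ whose suffix is $g$ and whose length-$(s+t)$ prefix contains exactly $s$ zeros. Granting this, the proposition is the case $s=d$, $t=n-d$, $g=\epsilon$: the string $0^d1^{n-d}$ is always a necklace and the prefix condition is then vacuous, so $\mathscr{L}(d,n-d,\epsilon)$ lists all fixed-density necklaces.

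First I would handle the base case $s=0$, where $\mathscr{L}(0,t,g)=1^tg$: the only string with suffix $g$ whose length-$t$ prefix has no zero is $1^tg$ itself, and it is a necklace by the precondition. For the inductive step $s\ge1$ I would sort the necklaces to be listed by their length-$(s+t)$ prefix $\gamma$. The prefix $\gamma=0^s1^t$ contributes exactly $0^s1^tg$, which is listed first. If $\gamma\neq0^s1^t$ then some one of $\gamma$ precedes its last zero, so $\gamma=\gamma'\,0\,1^m$ where $1^m$ (with $0\le m\le t-1$) is the run of ones following the last zero of $\gamma$; then $\gamma'$ has length $(s-1)+(t-m)$ with $s-1$ zeros and $t-m$ ones, the necklace equals $\gamma'\,0\,1^m g$, and by the induction hypothesis it appears in $\mathscr{L}(s-1,\,t-m,\,01^mg)$ — provided the precondition of that call, that $0^{s-1}1^{t-m}01^mg$ be a necklace, holds.

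Everything therefore reduces to two structural facts about lexicographically least rotations, and this is where the work lies. The first (``canonical candidate'') is: if $0^{s-1}1^{t-m}01^mg$ is not a necklace, then neither is any string $\gamma'\,0\,1^mg$ with $\gamma'$ of that composition. I would argue that a necklace with $a$ leading zeros begins with $0^a$, and that shifting the zeros of $\gamma'$ as far left as possible — turning $\gamma'$ into $0^{s-1}1^{t-m}$ — lexicographically decreases, cyclic shift by cyclic shift, every rotation that is not already ruled out; so if the sorted representative is not lex-minimal over its rotations, nothing with that suffix is. This shows both that discarding the remaining values of $m$ omits no necklace and that every retained recursive call meets its precondition. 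The second fact is the monotonicity behind the definition of $j$: the set $\{m\in\{0,\dots,t-1\} : 0^{s-1}1^{t-m}01^mg\text{ is a necklace}\}$ is an up-closed interval $\{j,\dots,t-1\}$. I would prove this by showing that the step $m\mapsto m+1$, which replaces a factor $1\,0\,1^m$ by $0\,1^{m+1}$ inside the string, is lex-decreasing at the first position where the two strings differ, and then checking shift by shift that it cannot create a rotation lex-smaller than the new string; for $s=1$ one checks that no $m$ qualifies (a string beginning with $1$ that is not all ones is never a necklace), so the interval is empty, matching the fact that $\mathscr{L}(1,t,g)$ terminates after $01^tg$.

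Finally I would verify the order and the ``exactly once'' part. Reversing $0^s1^tg$ gives $g^R1^t0^s$, which is the lexicographic extreme among reversals of strings $\gamma g$ with $\gamma$ of the prescribed composition, so $0^s1^tg$ is correctly placed first; reversing $01^mg$ gives $g^R1^m0$, so as $m$ runs through $t-1,t-2,\dots,j$ the suffixes are in the order demanded of the blocks $\mathscr{L}(s-1,t-m,01^mg)$, each of which is internally ordered by the induction hypothesis. Disjointness is immediate because distinct values of $m$ give distinct lengths for the run of ones following the last zero of the length-$(s+t)$ prefix, and the prefix $0^s1^t$ has no zero outside its initial block; uniqueness inside a block is again the induction hypothesis. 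The hardest part will be the two rotation lemmas of the previous paragraph — the bookkeeping needed to show that ``sorting the zeros to the left'' and ``pushing a one past the separating zero'' both preserve the property of being the lexicographically least rotation is the technical core of the construction.
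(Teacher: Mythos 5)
The paper does not prove this proposition at all: it is stated with a bare citation to the Ruskey--Sawada--Williams paper, so there is no in-paper argument to compare against. Evaluating your sketch on its own merits, the inductive invariant you set up and the reduction to two rotation lemmas is sound, and the whole argument can be completed. The key observation, though, is that both of your lemmas are already consequences of the bubble-language proposition the paper proves a page earlier in the same section, namely that swapping the first $10$ of a fixed-density necklace to $01$ yields another such necklace. The monotonicity lemma is immediate: for $s\geq 2$ the first $10$ of $0^{s-1}1^{t-m}01^m g$ sits at positions $s+t-m-2$ and $s+t-m-1$, and swapping it produces exactly $0^{s-1}1^{t-m-1}01^{m+1}g$, so the set of valid $m$ is up-closed. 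The canonical-candidate lemma follows by the contrapositive: if $\gamma'01^m g$ is a necklace with $\gamma'\neq 0^{s-1}1^{t-m}$, then $\gamma'$ must start with $0$ (else the string starts with $1$ and has a zero, contradicting necklace-hood) and, being unsorted, contains a $10$ after its leading block of ones, so the first $10$ of the whole string lies inside $\gamma'$; repeatedly applying the bubble swap stays inside $\gamma'$ and terminates at $0^{s-1}1^{t-m}01^m g$ through a chain of necklaces. You propose instead to re-derive these facts ``shift by shift,'' which would work but duplicates the four-case rotation analysis the paper already carries out; invoking the bubble-language proposition directly is both shorter and safer. One small caution on the ordering claim: your phrase ``lexicographic extreme'' glosses over the fact that, under the paper's stated definition of co-lex (increasing value read right-to-left), the recursion as written emits strings in \emph{decreasing} co-lex order within each block -- $0^s1^tg$ has the largest reversal, not the smallest, and the example table ($00001111$ first, $01010101$ last) confirms this. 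That inconsistency originates in the paper's informal definition rather than in your argument, but you should commit to a direction before asserting the output is ``in co-lexicographic order.''
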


We can visualize the recursion with a \emph{computation tree}, in which every node is labeled with a suffix $g$ from a certain instance of the recursion. We label the root by $\epsilon$, and the children of any node are labeled from left to right by $01^i$ with $i$ decreasing. When the suffix tree is build we add an additional leaf to each node in the tree of the form $0^s1^tg$. The computation tree for necklaces of length $8$ with density $4$ can be found in Figure \ref{fig:comptree}. We can write it simpler by replacing the suffix at every node with the string in the leaf next to it. This simpler tree is called the \emph{compact computation tree}, and the compact computation tree for necklaces of length $8$ with density $4$ can be found in Figure \ref{fig:comptree2}. Note that by construction we get the co-lex order by a pre-order traversal of the compact computation tree. 

\begin{figure}
\includegraphics[scale=1]{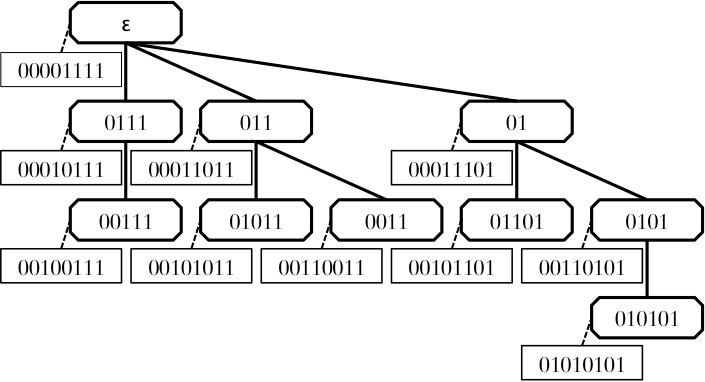}
\caption{The computation tree for necklaces of length 8 with density 4.}
\label{fig:comptree}
\end{figure}

\begin{figure}
\includegraphics[scale=1]{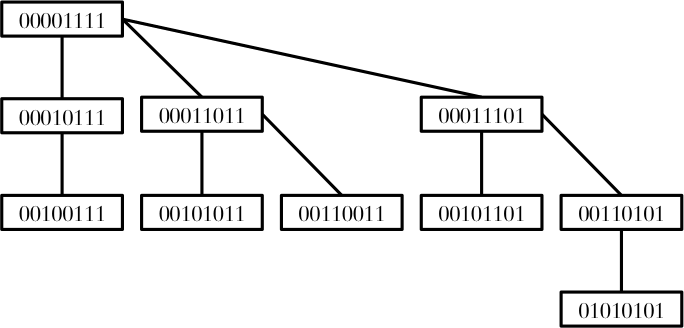}
\caption{The {\bf compact} computation tree for necklaces of length 8 with density 4.}
\label{fig:comptree2}
\end{figure}

Now we are ready to define the cool-lex order, which provides a gray code. Note that the only difference in that the term $0^s1^tg$ has been changed from first to last from the co-lex order.
\begin{definition}
Let initially $s=d$, $t=n-d$ and $g= \epsilon$. All necklaces of length $n$ and density $d$ are generated recursively in \emph{cool-lex order} by
\[
\mathscr{L}(s,t,g) = \left\{ \begin{array}{ll}
\mathscr{L}(s-1,1,01^{t-1}g),\dots,\mathscr{L}(s-1,t-j,01^jg, 0^s1^tg) & \textrm{ if } s>0, \\
1^t g & \textrm{ if } s = 0. \end{array} \right.
\]
where $j$ is the minimum value such that $0^{s-1}1^{t-j}01^j g$ is a fixed density necklace.
\end{definition}
Since the term $0^s1^tg$ is placed last in the recursion we get the cool-lex order by post-order transversal of the compact computation tree.

By either traversing the tree or using the recursive formula we can get the co-lex order and cool-lex order respectively for our example:
\[
\begin{array}{cc}
\textnormal{\bf co-lex} & \textnormal{\bf cool-lex} \\
00001111 & 00100111 \\
00010111 & 00010111 \\
00100111 & 00101011 \\
00011011 & 00110011 \\
00101011 & 00011011 \\
00110011 & 00101101 \\
00011101 & 01010101 \\
00101101 & 00110101 \\
00110101 & 00011101 \\
01010101 & 00001111 
\end{array}
\]

All we have done up to now could be done for any bubble language, not just for fixed density necklaces. By analyzing the possible steps from one binary string to the next in the compact computation tree in post-order one gets the following result (Theorem 3.1, page 10 \cite{Ruskey2012}):
\begin{theorem}
Cool-lex order provide a gray code for any bubble language.
\end{theorem}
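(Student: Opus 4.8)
The plan is to prove this directly, by enumerating the kinds of consecutive pairs that can occur in a post-order traversal of the compact computation tree and checking that each differs by a single \emph{prefix rotation} --- the cyclic rotation of some prefix $b_1\cdots b_i$ by one position --- which is the operation meant by ``constant amount of change'' for fixed-density, shift-type Gray codes and which degenerates to an adjacent transposition when the rotated prefix is nearly constant. Throughout I assume, without loss of generality, that the bubble language $\mathcal{B}$ satisfies closure property $2$ (switching the first $10$ to $01$ keeps one in $\mathcal{B}$); property $1$ is the mirror image, and the family of interest, fixed-density necklaces, satisfies property $2$ by the proposition above, so this loses nothing.

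First I would set up the combinatorial geography of the tree. A node is a recursion instance $\mathscr{L}(s,t,g)$, carrying in compact form the label $0^s1^tg$; its children, left to right, are $\mathscr{L}(s-1,\,t-i,\,01^ig)$ for $i=t-1,t-2,\dots,j$, where $j$ is the least index with $0^{s-1}1^{t-i}01^ig\in\mathcal{B}$. Since cool-lex lists node labels in post-order, and post-order visits each node immediately after the last node of its last child's subtree, I would prove by induction on subtrees that (i) inside every subtree consecutive strings differ by one prefix rotation, (ii) the last string of the subtree of $\mathscr{L}(s,t,g)$ is its own label $0^s1^tg$, and (iii) the first string of that subtree has an explicit form: following the leftmost branch lowers $s$ by one each step, resets the middle component, and prepends zeros to the carried suffix (the first step prepending $01^{t-1}$), the descent stopping at the first instance all of whose candidate children fail the $\mathcal{B}$-test, and unwinding the recursion expresses this terminal label as a single prefix rotation of $0^s1^tg$.

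Granting (i)--(iii) for the children, only two boundary transitions at $\mathscr{L}(s,t,g)$ remain. Type (A), last child to parent: $0^{s-1}1^{t-j}01^jg$ and $0^s1^tg$ differ only in that the lone $0$ between $1^{t-j}$ and $1^j$ has migrated to the front of the $0$-block, which is precisely a prefix rotation of length $s+t-j$ sliding one $0$ past the block $1^{t-j}$; here the minimality of $j$ and closure in $\mathcal{B}$ are exactly what guarantee that the post-order successor of the last child really is the parent and that no string of $\mathcal{B}$ is skipped. Type (B), a non-last child to the first string of the next sibling's subtree: using (iii) for the next sibling, the comparison isolates a single prefix that has been cyclically rotated by one. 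Hence every consecutive pair differs by one prefix rotation and the listing is a Gray code. (For the cyclic refinement one additionally inspects the wrap-around from the last label $0^d1^{n-d}$ back to the first string, which is of the same type.)

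The real work is bookkeeping rather than a single idea: one must track the accumulated zero-prefixes and carried suffixes through the recursion precisely enough to pin down the leftmost-descent formula in (iii) and the child-to-next-sibling comparison in (B), and --- the delicate point --- one must invoke the bubble-language closure exactly at the indices where the truncation parameter $j$ cuts off the child list, both to certify that the post-order successor is the predicted string and to certify that every string produced actually lies in $\mathcal{B}$. Once this geography is in place, verifying that each of the two boundary transitions is a single prefix rotation is routine.
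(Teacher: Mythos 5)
The paper does not prove this theorem itself: it is stated as a citation of Ruskey, Sawada and Williams (Theorem~3.1 of \cite{Ruskey2012}), with only the one\mbox{-}line remark ``by analyzing the possible steps from one binary string to the next in the compact computation tree in post-order.'' So there is no internal proof to compare against, but your proposal can still be checked on its own terms, and it contains a genuine error.

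Your high-level plan --- induct on subtrees of the compact computation tree, reduce everything to the two boundary types (last child $\to$ parent, and subtree end $\to$ next sibling's subtree start) --- is exactly the right skeleton, and it is what the cited paper does. The error is in the target property. You claim that every consecutive pair in cool-lex order differs by a \emph{single cyclic rotation of a prefix by one position}. That is false for bubble languages. The paper's own worked example of necklaces with $n=8$, $d=4$ already refutes it: the consecutive cool-lex pair
\[
00011011 \;\longrightarrow\; 00101101
\]
differs in positions $3,4,6,7$ (two disjoint adjacent transpositions), and one can check directly that no cyclic shift of any prefix of $00011011$ by one position produces $00101101$. The correct statement proved by Ruskey--Sawada--Williams is that consecutive cool-lex strings in a bubble language differ by \emph{at most two} transpositions; ``Gray code'' in the paper's sense of ``constant amount of change'' refers to that weaker guarantee.

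Concretely, the gap sits in your step (iii) and in Type~(B). You observe, correctly, that the leftmost descent may terminate early when the bubble-language test prunes all candidate children; but you then assert that ``unwinding the recursion expresses this terminal label as a single prefix rotation of $0^s1^tg$.'' This holds only when the descent bottoms out (reaches $s'=0$) or stops after exactly one step. When the descent stops after $k\ge 2$ steps the first string of the subtree of $\mathscr{L}(s,t,g)$ is $0^{s-k}10^{k}1^{t-1}g$, which differs from $0^s1^tg$ in positions $s-k+1$ and $s+1$; composing this with the adjacent transposition between consecutive sibling labels produces the four-position change seen above. So the inductive invariant ``single prefix rotation'' cannot be maintained; it must be weakened to ``at most two transpositions,'' after which your Type~(A)/Type~(B) case analysis can be carried out along the lines of the cited proof.
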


To prove that a specific bubble language can be generated in constant amortized time requires more involved analysis, and this was done in the case for fixed-density necklaces (Theorem 2 page 11, \cite{Sawada2012a}):
\begin{theorem}
Fixed-density necklaces can be generated in cool-lex Gray code order or co-lex order in constant amortized time.
\end{theorem}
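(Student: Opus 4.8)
The plan is to exhibit an iterative generation algorithm and analyze its amortized cost via a charging (amortization) argument on the compact computation tree, following the structure of \cite{Sawada2012a}. First I would make precise the successor rule that cool-lex order induces on the compact computation tree: since cool-lex is a post-order traversal of that tree, moving from one necklace to the next is either (i) a step down to the leftmost child (a single bubble move, switching the first $10$ to $01$ plus possibly relocating the block boundary), (ii) a step to the next sibling, or (iii) a climb up one or more levels followed by a move to a sibling or the emission of the node's own word $0^s1^tg$. Each individual step changes only a bounded number of bits — this is exactly the Gray-code property already granted by the theorem that cool-lex order gives a Gray code for any bubble language — but the issue is that a single successor computation may have to climb many levels of the tree, so we cannot bound the worst-case cost of one step by a constant.

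The heart of the argument is therefore an amortized analysis: I would show that although an individual transition can be expensive (a long climb up the tree), the total work done over the entire sequence of necklaces is $O(N)$ where $N$ is the number of necklaces generated. The standard device is to charge the cost of climbing up $k$ levels against the $k$ downward steps that were taken earlier to reach that depth; since every node is descended into exactly once over the whole traversal (pre/post-order visits each node once), the total number of down-steps is at most the number of nodes of the computation tree, which in turn is $O(N)$ because each internal node has at least one descendant leaf carrying a distinct necklace $0^s1^tg$. The one genuinely necklace-specific obstacle is that, unlike in a plain bubble language, the children of a node $\mathscr{L}(s,t,g)$ are not all of the strings $0^{s-1}1^{t-i}01^i g$ but only those that are themselves lexicographically smallest necklace representatives — the parameter $j$ ("minimum value such that $0^{s-1}1^{t-j}01^jg$ is a fixed density necklace") is defined precisely to skip the non-necklaces. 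So one must prove that (a) deciding necklace-hood of the candidate string, and (b) advancing $j$ to the next valid value, can be done in amortized constant time; this requires exploiting the periodicity structure of necklaces (a string $0^a1^b\gamma$ with $\gamma$ a necklace suffix is a necklace iff its aperiodic pattern conditions hold) so that the cost of testing and skipping over blocks of non-necklace candidates is charged against the necklaces that are actually output nearby. This bookkeeping — showing the skipped candidates are few relative to the emitted necklaces, so the overhead is absorbed — is the main technical obstacle.

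Concretely, I would carry out the steps in this order: (1) fix the data structure, maintaining the current string together with the stack of block boundaries $(s,t,g)$ along the current root-to-node path; (2) write down the three cases of the successor operation explicitly in terms of this stack and verify each is a legal cool-lex transition by appeal to the recursive definition of $\mathscr{L}$; (3) bound the cost of a single transition by $O(1 + (\text{number of stack pops}))$ plus the cost of the necklace test in case the boundary $j$ must be advanced; (4) prove the necklace-testing and $j$-advancement cost is $O(1)$ amortized using the periodicity lemma for fixed-density necklaces; (5) assemble the global amortized bound by the charging argument, noting that the sum of all stack-pop counts over the full run equals the sum of all stack-push counts, which is bounded by the total number of tree edges traversed, hence $O(N)$; (6) observe that co-lex order is the pre-order traversal of the same tree and the identical analysis — with "climb then move to next sibling" replaced by the symmetric pre-order rule — yields the same constant amortized bound. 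I expect step (4), the amortized analysis of skipping non-necklace candidates, to be where essentially all the real difficulty lies, since steps (2), (3), (5) and (6) are structural consequences of the already-established Gray-code theorem and the standard tree-traversal accounting.
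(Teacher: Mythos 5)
There is a genuine gap, and you in fact point to it yourself: everything in your outline up to and including the tree-traversal accounting (steps (1)--(3), (5), (6)) is the routine part, while step (4) — showing that deciding necklace-hood of the candidate children $0^{s-1}1^{t-j}01^jg$ and advancing $j$ to the minimal valid value costs amortized $O(1)$ — is left as an appeal to ``the periodicity structure of necklaces'' with no actual lemma or charging scheme. This is precisely where the content of the theorem lives. Your proposed bookkeeping (scan candidate values of $j$, test each, and charge the skipped non-necklace candidates against necklaces output nearby) is not obviously sound: a single naive necklace test costs $\Theta(n)$, a single node may in principle reject many consecutive candidates, and nothing in your sketch guarantees enough ``nearby'' output to absorb that cost. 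The established argument does not proceed by scan-and-test at all; it rests on a structural characterization of the minimal admissible $j$ (and on incrementally maintained membership information) that lets the algorithm jump to the correct child in constant time, and proving that characterization is the technical heart of the result. Without it, your amortization collapses to an unproven claim, so the proposal is a plausible plan rather than a proof.

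For calibration: the paper you are working from does not prove this statement either — it quotes it verbatim as Theorem 2 of \cite{Sawada2012a}, with the surrounding machinery (bubble languages, the compact computation tree, cool-lex as post-order traversal) imported from \cite{Ruskey2012}. So your task was essentially to reconstruct the Sawada--Williams argument; your skeleton is consistent with its high-level structure (computation tree, one node per emitted necklace, cost charged to tree edges), but the necklace-specific lemma that makes the child computation constant-time amortized — the only step that distinguishes fixed-density necklaces from a generic bubble language — is missing, and stating that it ``requires exploiting periodicity'' does not discharge it.
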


The methods in this section are used to obtain the results in Tables \ref{tab:bubble3_1}, \ref{tab:bubble3_2}, \ref{tab:bubble4_1}, \ref{tab:bubble4_2}, \ref{tab:bubble5_1} and \ref{tab:bubble5_2} in the appendix.

\section{Problem formulated as a semidefinite program}
\label{sec:methodsdensity}
Recall that  $W(k,G,\delta)$ denotes the minimal number of $k$-arithmetic progressions in the subset $S \subseteq G$ with $|S| = \delta |G|$. For this article we are primarily interested in $G = \mathbb{Z}_p$ for prime $p$. In the case when $k=3$ we have
\[ 
W(3,G,D/|G|) = \min \{\sum_{\{i,j,k\} \textrm{ is an A.P. in } G} x_ix_jx_k : x_i \in \{0,1\} , \sum_{i=0}^{|G|-1} x_i = D\},
\]
and using the methods introduced in Sections \ref{sec:poly} and \ref{sec:Sym} we make the following relaxation to get an algebraic certificate for the lower bound:
\[ 
W(3,G,D/|G|) \geq \max \{ \lambda : \sum_{\{i,j,k\} \textrm{ A.P. in } G} X_iX_jX_k - \lambda = S, X \in K \} 
\]
where $K = \{X \in [0,1]^n: \sum_{i=0}^{|G|-1}X_i = D\}$ and $S = \sum_i S_i^2g_i$ for $S_i,g_i \in \mathbb{R}[X_1,\dots,X_k]$ where $g_i$ are the half-spaces defining $K$.

Let us use the degree 3 relaxation of Putinar's Positivstellensatz, and let the maximal lower bound using this relaxation be denoted $\lambda^*$. Denote the elements of $G$ by $g_1,\dots,g_{|G|}$ and let $v=[1, X_{g_1}, \dots, X_{g_{|G|}}]^T$ be the vector of all monomials of degree less or equal to one. We get 
\[
\begin{array}{rl}
\lambda^* = \max  & \lambda \\
\textnormal{subject to:} & \displaystyle p_{_G} - \lambda = v^T Q_0 v + \sum_{g \in G} v^T Q_g^+ vX_g + \sum_{g \in G}  v^T Q_g^- v (1-X_g) \\
& \displaystyle + v^TQ_s^+v(\sum_{g \in G} X_g -D) + v^TQ_s^-v(D - \sum_{g \in G} X_g), \\
& \displaystyle X_{_G} \in [0,1]^{|G|}, \\
& \displaystyle Q_0,Q_s^+,Q_s^-,Q_g^+,Q_g^- \succeq 0 \textnormal{ for all } g\in G.
\end{array}
\]
This optimization problem is equivalent to a problem on the form of \eqref{eq:dual3}, that is if we use the notation  $h(x)= \sum_\alpha [h(x)]_\alpha x^\alpha$ then:
\[
\begin{array}{rl}
\lambda^* = \max  & \lambda \\
\textnormal{subject to} & [\displaystyle p_{_G} -  v^T Q_0 v + \sum_{g \in G} v^T Q_g^+ vX_g + \sum_{g \in G}  v^T Q_g^- v (1-X_g) \\
& \displaystyle + v^TQ_s^+v(\sum_{g \in G} x_g -D) + v^TQ_s^-v(D - \sum_{g \in G} X_g)]_0 = \lambda, \\
&  [\displaystyle p_{_G} -  v^T Q_0 v + \sum_{g \in G} v^T Q_g^+ vX_g + \sum_{g \in G}  v^T Q_g^- v (1-X_g) \\
& \displaystyle + v^TQ_s^+v(\sum_{g \in G} X_g -D) + v^TQ_s^-v(D - \sum_{g \in G} X_g)]_\alpha = 0, \\
& \displaystyle X_{_G} \in [0,1]^{|G|}, \\
& \displaystyle Q_0,Q_s^+,Q_s^-,Q_g^+,Q_g^- \succeq 0 \textnormal{ for all } g\in G.
\end{array}
\]
Since arithmetic progressions are invariant under affine transformations we can find a solution to the problem by restricting to invariant solutions. If $g_1$ is the identity element and $v_g = [1, X_{g_1 + g}, \dots, X_{g_{|G|}+g}]$  we get
\[
\begin{array}{rl}
\lambda^* = \max  & \lambda \\
\textnormal{subject to} & [\displaystyle p_{_G} - v^TQ_0v + \sum_{g \in G} v_g^TQ_{g_1}^+v_gX_g+ \sum_{g \in G} v^T_gQ_{g_1}^-v_g(1-X_g)  \\
& \displaystyle + v^TQ_s^+v(\sum_{g \in G} X_g -D) + v^TQ_s^-v(D - \sum_{g \in G} X_g)]_0 = \lambda, \\
&  [\displaystyle p_{_G} - v^TQ_0v + \sum_{g \in G} v_g^TQ_{g_1}^+v_g(1+X_g)+ \sum_{g \in G} v_g^TQ_{g_1}^-v_g(1-X_g)  \\
& \displaystyle + v^TQ_s^+v(\sum_{g \in G} X_g -D) + v^TQ_s^-v(D - \sum_{g \in G} X_g)]_\alpha = 0, \\
& \displaystyle X_{_G} \in [0,1]^{|G|} \\
& \displaystyle Q_0,Q_s^+,Q_s^-,Q_{g_{_1}}^+,Q_{g_{_1}}^- \succeq 0, \\
& \displaystyle Q_0(g_i,g_j) = Q_0(a + bg_i,a+bg_j), \forall \textnormal{ } (a,b) \in G \rtimes \mathbb{Z}^+, g_i,g_j \in G, \\
& \displaystyle Q_0(g,1) = Q_0(1,g) = Q_0(g_1,1), \forall \textnormal{ } g \in G, \\
& \displaystyle Q_s^+(g_i,g_j) = Q_s^+(a + bg_i,a+bg_j), \forall \textnormal{ } (a,b) \in G \rtimes \mathbb{Z}^+, g_i,g_j \in G, \\
& \displaystyle Q_s^+(g,1) = Q_s^+(1,g) = Q_s^+(g_1,1), \forall \textnormal{ } g \in G, \\
& \displaystyle Q_s^-(g_i,g_j) = Q_s^-(a + bg_i,a+bg_j), \forall \textnormal{ } (a,b) \in G \rtimes \mathbb{Z}^+, g_i,g_j \in G, \\
& \displaystyle Q_s^-(g,1) = Q_s^-(1,g) = Q_s^-(g_1,1), \forall \textnormal{ } g \in G, \\
& \displaystyle Q_{g_1}^+(g_j,g_k) = Q_{g_1}^+(b  g_j,b  g_k), \forall \textnormal{ } (a,b) \in G \rtimes \mathbb{Z}^+, g_j,g_k \in G,  \\
& \displaystyle Q_{g_1}^-(g_j,g_k) = Q_{g_1}^-(b  g_j,b  g_k), \forall \textnormal{ } (a,b) \in G \rtimes \mathbb{Z}^+, g_i,g_j \in G, \\
& \displaystyle Q_{g_1}^+(g,1)  = Q_{g_1}^+(b  g,1), \forall \textnormal{ } (a,b) \in G \rtimes \mathbb{Z}^+, g \in G, \\
& \displaystyle Q_{g_1}^-(g,1)  = Q_{g_1}^-(b  g,1), \forall \textnormal{ } (a,b) \in G \rtimes \mathbb{Z}^+, g \in G, \\
& \displaystyle Q_{g_1}^+(1,g)  = Q_{g_1}^+(1,b  g), \forall \textnormal{ } (a,b) \in G \rtimes \mathbb{Z}^+, g \in G,  \\
& \displaystyle Q_{g_1}^-(1,g)  = Q_{g_1}^-(1,b  g), \forall \textnormal{ } (a,b) \in G \rtimes \mathbb{Z}^+, g \in G. \\
\end{array}
\]
We see that this reduces the number of variables significantly and in this new formulation we only need to require that five $|G|+1 \times |G|+1$-matrices are positive semidefinite instead of the $2|G|+3$ matrices required in the original formulation. One could hope to reduce the size further using Theorem \ref{thm:Schrijver} if the commutant is small. Unfortunately though, one cannot hope to get reduce the size of the $Q_s^+$-matrices significantly since there will be one basis element of the commutant for the roughly $n/2$ pairs of $3$-tuples $(g_1,g_i,g_k)$ and $(g_1,g_i-g_k+g_i,g_i)$, and these matrices turns out to be the most essential to be able to find $\lambda^*$. We have solved this semidefinite program to find lower bounds to $W(3,\mathbb{Z}_p,D/p)$ for all primes $ 5 \leq p \leq 613$ and $D \in \{ 0, \frac{1}{20},\frac{2}{20}, \dots, p-\frac{1}{20}, p \}$ using the Matlab-based software CVX \cite{CVX} on the Triton computer cluster, which is part of the Aalto Science-IT project. The code was parallelized and we ran it simultaneously on 100 computer nodes. All the data was generated in 5 days. The data helped us get the intuition needed to prove the theorems in this section. The theoretically most interesting piece of this data can be found in Figure \ref{fig:allbounds} in Section \ref{sec:numerical2}. In that section the data is also discussed further.

To achieve better bounds we make a degree 5 relaxation of Putinar's Positivestellensatz. The SDP is the same as in the degree 3 relaxation above with the difference that $v$ is the vector of all monomials of degree up to $2$, and thus the $Q$-matrices are of size $|G|^2+|G|+1 \times |G|^2 + |G| +1$ and many more equalities. Just to get the equalities from the $Q$-matrices required 2000 lines of code, which was significantly more than in the degree 3 relaxation. It is not obvious to the author how the code could be simplified significantly as there are so many different cases to consider. Using the same computer cluster we found $W(k,\mathbb{Z}_p,D/p)$ for all primes $ 5 \leq p \leq 19$, $k \in \{3,4,5\}$ and $D \in \{ 0, \frac{1}{20},\frac{2}{20}, \dots, p-\frac{1}{20}, p \}$ by running parallelized code simultaneously on 100 computer nodes for 7 days. Again, the most interesting piece of data can be found in Figure \ref{fig:allbounds} in Section \ref{sec:numerical2}.

Since 
\[
\sum_{g \in G} x_g = D
\]
 on $\{0,1\}^{|G|}$ it follows that 
 \[
 \sum_{g \in G}  x_g^3 - D= 0,
 \]
 \[
 \sum_{g \neq h \in G}   x_g^2x_h - D(D-1) = 0
 \] 
 and 
 \[
 \sum_{g \neq h \neq l \in G} x_gx_hx_l - \binom{D}{3} = 0
 \]
  on $\{0,1\}^{|G|}$, providing several new possibilities for polynomials that are nonnegative on $\{0,1\}^{|G|}$. Note that these equation holds over the discrete hypercube, but not necessarily on the continuous hypercube, so technically they have to be added before making the relaxation. Similar assertion also holds for higher order polynomials. These redundant conditions simplifies the algebraic certificates we get from Putinar's Positivstellensatz, and replacing some of the original $g_i$s to these new constraints can in many cases simplify the computations significantly. By using these conditions and setting a lot of variables to zero we can make all polynomials homogeneous up to a constant and get lower bounds to $\lambda^*$. This is the main idea behind Theorem \ref{thm:density2}. As adding seemingly redundant conditions on this form gives more freedom in the semidefinite relaxation one gets from Putinar's Positivstellensatz we believe that degree 4 and 5 version of this theorem can be found using a similar trick. Numerical results suggests that a degree 4 and 5 certificate for a lower bound to the number of arithmetic progressions of length 3 would have many similar terms with certificates bounding arithmetic progressions of length 4 and 5 respectively.

\section{Proofs of Theorems \ref{thm:density}, \ref{thm:smallprimes}, \ref{thm:density2} and Corollary \ref{cor:density}}
\label{sec:proof2}
To simplify notation in the proofs of this section, let 
\[
\begin{array}{rl}
\displaystyle \omega(a^0,a^3,a^{21},a^{111}_1,\dots,a^{111}_d)  =& \displaystyle a^0+a^3\sum_{i=0}^{p-1}X_i^3+a^{21}\sum_{i \neq j}X_i^2X_j  +\\
 & \displaystyle +\sum_{i=1}^d a^{111}_i \sum_{(i_1,i_2,i_3) \in \mathrm{orb}_i} X_{i_1}X_{i_2}X_{i_3} 
 \end{array}
 \]
where $d$ is the number of orbits $\mathrm{orb}_i$ of the polynomials on the form $X_{i_1}X_{i_2}X_{i_3}$s under the action $\{i_1,i_2,i_3\} \mapsto \{ai_1+b,ai_2+b,ai_3+b\}$ with $(b,a) \in \mathbb{Z}_p \times (\mathbb{Z}_p \smallsetminus \{0 \})$.

\begin{proof}[Proof of Theorem \ref{thm:density}]
All we have to check is that the algebraic certificate
\[ 
\begin{array}{rl}
\displaystyle \sum_{\{i,j,k\} \textrm{ A.P. in } \mathbb{Z}_p} X_iX_jX_k - \lambda =& \displaystyle \sum_{i=0}^{p-1} \sigma_{1,i} X_i  + \sum_{i=0}^{p-1} \sigma_{2,i} X_i + \sigma_{3}(D- \sum_{i=0}^{p-1}X_i^3 ) \\
& \displaystyle + \sigma_{4}(\sum_{i \neq j}X_i^2X_j - D(D-1)) = S,
\end{array}
\]
where
\[ 
\begin{array}{rl}
\displaystyle \sigma_{1,i} &= \displaystyle  \frac{1}{p-1} \sum_{0<j<k<(p-1)/2} (X_{j+i} - X_{j+k+i} - X_{n-j-k+i} + X_{n-j+i})^2 \\
\displaystyle \sigma_{2,i} &= \displaystyle  \frac{1}{p-1}(DX_i - \sum_{j=0}^{p-1} X_j)^2 \\
\displaystyle \sigma_{3} &= \displaystyle \frac{(D-1)^2}{p-1} \\
\displaystyle \sigma_{4} &= \displaystyle \frac{4D-p+3}{2(p-1)}
\end{array}
\]
is correct. We have:
\[ \sum_{i=0}^{p-1}\sigma_{1,i}X_i = \omega(0,0,\frac{(\frac{p-3}{2})}{p-1},\frac{p-7}{p-1},-\frac{6}{p-1},\dots, -\frac{6}{p-1}),\]
\[ \sum_{i=0}^{p-1}\sigma_{2,i}X_i = \omega(0,\frac{(D-1)^2}{p-1},\frac{3-2D}{p-1},\frac{6}{p-1},\dots,\frac{6}{p-1}),\]
\[ \sigma_3(D-\sum_{i=0}^{p-1}X_i^3)= \omega(D\frac{(D-1)^2}{p-1},-\frac{(D-1)^2}{p-1},0,\dots,0)\]
and 
\[ \sigma_4(\sum_{i \neq j}X_i^2X_j - D(D-1))= \omega(-D(D-1)\frac{2D-(\frac{p+3}{2})}{p-1},0,\frac{2D-(\frac{p+3}{2})}{p-1},0,\dots,0). \]
Summing up we get 
\[
\begin{array}{rl}
S &= \displaystyle \omega(D\frac{(D-1)^2}{p-1}-D(D-1)\frac{2D-(\frac{p+3}{2})}{p-1},0,0,1,0,\dots,0) \\
&= \displaystyle \omega(- \frac{D^3 - (\frac{p+3}{2}) D^2 + (\frac{p+3}{2}-1)D}{p-1},0,0,1,0,\dots,0)  \\
&= \displaystyle \sum_{\{i,j,k\} \textrm{ A.P. in } \mathbb{Z}_p} X_iX_jX_k - \lambda.
\end{array}
\]
\end{proof}

\begin{proof}[Proof of Theorem \ref{thm:smallprimes}]
An algebraic certificate proving that 
\[
W(3,\mathbb{Z}_5,2) \geq \frac{D^3-3D^2+2D}{6}
\]
 is the following:
\[
\begin{array}{rl}
\displaystyle \sum_{\{i,j,k\} \textrm{ A.P. in } \mathbb{Z}_5} X_iX_jX_k - \lambda =& \displaystyle  \sum_{i=0}^4\sigma_{1,i}X_i +\sigma_2(D-\sum_{i=0}^4 X_i^3) \\
& \displaystyle + \sigma_3(\sum_{i \neq j} X_i^2X_j -D(D-1))  = S_5
\end{array}
\]
where
\[ 
\sigma_{1,i} = \frac{1}{6}(DX_i-\sum_{j=0}^4X_j)^2,
\]
\[
\sigma_{2} = \frac{D^2-2D+1}{6},
\]
\[
\sigma_{3} = \frac{2D-3}{6}
\]
and
\[
\lambda = \frac{D^3-3D^2+2D}{6}.
\]
To check that everything adds up, let us use the $\omega$-notation:
\[ 
\sum_{i=0}^4\sigma_{1,i}X_i= \omega(0,\frac{(D-1)^2}{6},-\frac{2D-3}{6},1)
\]
\[
\sigma_2(D-\sum_{i=0}^4X_i^3) =\omega(\frac{D(D-1)^2}{6},-\frac{(D-1)^2}{6},0,0)
\]
\[
 \sigma_3(\sum_{i \neq j} X_i^2X_j -D(D-1))  = \omega(-\frac{D(D-1)(2D-3)}{6},0,\frac{2D-3}{6},0).
\]
Summing up yields 
\[
\begin{array}{rl}
S_5 &= \displaystyle \omega(\frac{D(D-1)^2}{6}-\frac{D(D-1)(2D-3)}{6},0,0,1) \\
&= \displaystyle \omega(- \frac{D^3-3D^2+2D}{6},0,0,1)  \\
&= \displaystyle \sum_{\{i,j,k\} \textrm{ A.P. in } \mathbb{Z}_5} X_iX_jX_k - \lambda.
\end{array}
\]

An algebraic certificate proving that 
\[
W(3,\mathbb{Z}_7,2) \geq \frac{D^3-4D^2+3D}{8}
\]
 is the following:
\[
\begin{array}{rl}
\displaystyle \sum_{\{i,j,k\} \textrm{ A.P. in } \mathbb{Z}_7} X_iX_jX_k - \lambda =& \displaystyle  \sum_{i=0}^6\sigma_{1,i}X_i +\sigma_2(D-\sum_{i=0}^6 X_i^3)+\sum_{i=0}^6\sigma_{4,i}X_i  \\
& \displaystyle + \sigma_3(\sum_{i \neq j} X_i^2X_j -D(D-1))  = S_7
\end{array}
\]
where
\[ 
\sigma_{1,i} = \frac{1}{8}(DX_i-\sum_{j=0}^6X_j)^2,
\]
\[
\sigma_{2} = \frac{D^2-2D+1}{8},
\]
\[
\sigma_{3} = \frac{D-2}{4}
\]
\[
\sigma_{4,i} = \frac{1}{8}(X_{i+1}+X_{i+2}-X_{i+3}+X_{i+4}-X_{i+5}-X_{i+6})^2
\]
and
\[
\lambda = \frac{D^3-4D^2+3D}{8}.
\]
To check that everything adds up, let us use the $\omega$-notation:
\[ 
\sum_{i=0}^6\sigma_{1,i}X_i= \omega(0,\frac{(D-1)^2}{8},-\frac{2D-3}{8},\frac{3}{4},\frac{3}{4})
\]
\[
\sigma_2(D-\sum_{i=0}^6X_i^3) =\omega(\frac{D(D-1)^2}{8},-\frac{(D-1)^2}{8},0,0,0)
\]
\[
\sigma_3(\sum_{i \neq j} X_i^2X_j -D(D-1))  = \omega(-\frac{D(D-1)(D-2)}{4},0,\frac{D-2}{4},0,0)
\]
\[
\sum_{i=0}^6\sigma_{4,i}X_i  = \omega(0,0,\frac{1}{8},\frac{1}{4},-\frac{3}{4})
\]
Summing up yields
\[
\begin{array}{rl}
S_7 &= \displaystyle \omega(\frac{D(D-1)^2}{8}-\frac{D(D-1)(D-2)}{4},0,0,1,0) \\
&= \displaystyle \omega(- \frac{D^3-4D^2+3D}{8},0,0,1,0)  \\
&= \displaystyle \sum_{\{i,j,k\} \textrm{ A.P. in } \mathbb{Z}_7} X_iX_jX_k - \lambda.
\end{array}
\]

An algebraic certificate proving that 
\[
W(3,\mathbb{Z}_{11},2) \geq \frac{\sqrt{5}D^3+(15-12\sqrt{5})D^2+(-15+11\sqrt{5})D}{30}
\]
 is the following:
\[
\begin{array}{rl}
\displaystyle \sum_{\{i,j,k\} \textrm{ A.P. in } \mathbb{Z}_{11}} X_iX_jX_k - \lambda =& \displaystyle  \sum_{i=0}^{10}\sigma_{1,i}X_i +\sigma_2(D-\sum_{i=0}^{10} X_i^3) +\sum_{i=0}^{10}\sigma_{4,i}X_i \\
& \displaystyle + \sigma_3(\sum_{i \neq j} X_i^2X_j -D(D-1))  = S_{11}
\end{array}
\]
where
\[ 
\sigma_{1,i} = \frac{\sqrt{5}}{30}(DX_i-\sum_{j=0}^{10}X_j)^2,
\]
\[
\sigma_{2} = \frac{\sqrt{5}(D-1)^2}{30},
\]
\[
\sigma_{3} = \frac{2\sqrt{5}D+15-12\sqrt{5}}{30}
\]
\[
\sigma_{4,i} = \frac{9\sqrt{5}-15}{30}((\sum_{j=0}^9 \cos (4 \pi j/10)X_{2^{i+j}+1})^2 + (\sum_{j=0}^9 \sin (4 \pi j/10)X_{2^{i+j}+1})^2)
\]
and
\[
\lambda = \frac{\sqrt{5}D^3+(15-12\sqrt{5})D^2+(-15+11\sqrt{5})D}{30}.
\]
To check that everything adds up, let us use the $\omega$-notation:
\[ 
\sum_{i=0}^{10}\sigma_{1,i}X_i= \omega(0,\sqrt{5}\frac{(D-1)^2}{30},-\frac{\sqrt{5}(2D-3)}{30},\frac{\sqrt{5}}{5},\frac{\sqrt{5}}{5})
\]
\[
\sigma_2(D-\sum_{i=0}^{10}X_i^3) =\omega(\frac{\sqrt{5}D(D-1)^2}{30},-\frac{\sqrt{5}(D-1)^2}{30},0,0,0)
\]
\[
\begin{array}{rl}
\displaystyle \sigma_3(\sum_{i \neq j} X_i^2X_j -D(D-1))  = \omega( & \displaystyle  -\frac{D(D-1)(2\sqrt{5}D+15-12\sqrt{5})}{30},0, \\
& \displaystyle  \frac{2\sqrt{5}D+15-12\sqrt{5}}{30},0,0)
\end{array}
\]
\[
\sum_{i=0}^{10}\sigma_{4,i}X_i  = \omega(0,0,\frac{9\sqrt{5}-15}{30},1-\frac{\sqrt{5}}{5},-\frac{\sqrt{5}}{5})
\]
Summing up yields
\[
\begin{array}{rl}
S_{11} &= \displaystyle \omega(\frac{\sqrt{5}D(D-1)^2}{30}-\frac{D(D-1)(2\sqrt{5}D+15-12\sqrt{5})}{30},0,0,1,0) \\
&= \displaystyle \omega(- \frac{\sqrt{5}D^3+(15-12\sqrt{5})D^2+(-15+11\sqrt{5})D}{30},0,0,1,0)  \\
&= \displaystyle \sum_{\{i,j,k\} \textrm{ A.P. in } \mathbb{Z}_{11}} X_iX_jX_k - \lambda.
\end{array}
\]

An algebraic certificate proving that 
\[
W(3,\mathbb{Z}_{13},2) \geq  \frac{21-2\sqrt{3}}{286}D^3 +\frac{28\sqrt{3}-151}{286}D^2+\frac{5-\sqrt{3}}{11}D
\]
 is the following:
\[
\begin{array}{rl}
\displaystyle \sum_{\{i,j,k\} \textrm{ A.P. in } \mathbb{Z}_{13}} X_iX_jX_k - \lambda =& \displaystyle  \sum_{i=0}^{12}\sigma_{1,i}X_i +\sigma_2(D-\sum_{i=0}^{12} X_i^3) +\sum_{i=0}^{12}\sigma_{4,i}X_i\\
& \displaystyle + \sigma_3(\sum_{i \neq j} X_i^2X_j -D(D-1))   = S_{13}
\end{array}
\]
where
\[ 
\sigma_{1,i} = \frac{21-2\sqrt{3}}{286}(DX_i-\sum_{j=0}^{12}X_j)^2,
\]
\[
\sigma_{2} = \frac{(21-2\sqrt{3})(D-1)^2}{286},
\]
\[
\sigma_{3} = \frac{2(21-2\sqrt{3})D+28\sqrt{3}-151}{286}
\]
\[
\begin{array}{rl}
\displaystyle \sigma_{4,i} =& \displaystyle \frac{23-9\sqrt{3}}{286}((\sum_{j=0}^{11} \cos (4 \pi j/12)X_{2^{i+j}+1})^2 + X(\sum_{j=0}^{11} \sin (4 \pi j/12)X_{2^{i+j}+1})^2)\\
& \displaystyle + \frac{5-\sqrt{3}}{22}((\sum_{j=0}^{11} \cos (2 \pi j/12)X_{2^{i+j}+1})^2 + (\sum_{j=0}^{11} \sin (2 \pi j/12)X_{2^{i+j}+1})^2)
\end{array}
\]
and
\[
\lambda = \frac{21-2\sqrt{3}}{286}D^3 +\frac{28\sqrt{3}-151}{286}D^2+\frac{5-\sqrt{3}}{11}D.
\]
To check that everything adds up, let us use the $\omega$-notation:
\[ 
\begin{array}{rl}
 \displaystyle \sum_{i=0}^{12}\sigma_{1,i}X_i= \omega(& \displaystyle 0,(21-2\sqrt{3})\frac{(D-1)^2}{286},-\frac{(21-2\sqrt{3})(2D-3)}{286},\\
& \displaystyle 6\frac{21-2\sqrt{3}}{286},6\frac{21-2\sqrt{3}}{286},6\frac{21-2\sqrt{3}}{286})
\end{array}
\]
\[
\sigma_2(D-\sum_{i=0}^{12}X_i^3) =\omega(\frac{(21-2\sqrt{3})D(D-1)^2}{286},-\frac{(21-2\sqrt{3})(D-1)^2}{286},0,0,0,0)
\]
\[
\begin{array}{rl}
\displaystyle \sigma_3(\sum_{i \neq j} X_i^2X_j -D(D-1))  = \omega( & \displaystyle -\frac{D(D-1)((42-4\sqrt{3})D+28\sqrt{3}-151)}{286},0, \\
& \displaystyle  \frac{(42-4\sqrt{3})D+28\sqrt{3}-151}{286},0,0,0)
\end{array}
\]
\[
\sum_{i=0}^{12}\sigma_{4,i}X_i  = \omega(0,0,\frac{88-22\sqrt{3}}{286},\frac{80+6\sqrt{3}}{143},-6\frac{21-2\sqrt{3}}{286},-6\frac{21-2\sqrt{3}}{286})
\]
Summing up yields
\[
\begin{array}{rl}
S_{13} &= \displaystyle \omega(- \frac{21-2\sqrt{3}}{286}D^3 -\frac{28\sqrt{3}-151}{286}D^2-\frac{5-\sqrt{3}}{11}D,0,0,1,0,0)  \\
&= \displaystyle \sum_{\{i,j,k\} \textrm{ A.P. in } \mathbb{Z}_{13}} X_iX_jX_k - \lambda.
\end{array}
\]

An algebraic certificate proving that 
\[
W(3,\mathbb{Z}_{17},2) \geq  \frac{1}{24}D^3 -\frac{1}{4}D^2+\frac{5}{24}D
\]
 is the following:
\[
\begin{array}{rl}
\displaystyle \sum_{\{i,j,k\} \textrm{ A.P. in } \mathbb{Z}_{17}} X_iX_jX_k - \lambda =& \displaystyle  \sum_{i=0}^{16}\sigma_{1,i}X_i +\sigma_2(D-\sum_{i=0}^{16} X_i^3) +\sum_{i=0}^{16}\sigma_{4,i}X_i\\
& \displaystyle + \sigma_3(\sum_{i \neq j} X_i^2X_j -D(D-1))   = S_{17}
\end{array}
\]
where
\[ 
\sigma_{1,i} = \frac{1}{24}(DX_i-\sum_{j=0}^{16}X_j)^2,
\]
\[
\sigma_{2} = \frac{(D-1)^2}{24},
\]
\[
\sigma_{3} = \frac{2D-6}{24}
\]
\[
\sigma_{4,i} = \frac{1}{8}((\sum_{j=0}^{16} (-1)^jX_{3^{i+j}+1})^2
\]
and
\[
\lambda = \frac{1}{24}D^3 -\frac{1}{4}D^2+\frac{5}{24}D.
\]
To check that everything adds up, let us use the $\omega$-notation:
\[ 
 \sum_{i=0}^{16}\sigma_{1,i}X_i= \omega( 0,\frac{(D-1)^2}{24},-\frac{(2D-3)}{24},\frac{1}{4},\frac{1}{4},\frac{1}{4})
\]
\[
\sigma_2(D-\sum_{i=0}^{16}X_i^3) =\omega(\frac{D(D-1)^2}{24},-\frac{(D-1)^2}{24},0,0,0,0)
\]
\[
 \sigma_3(\sum_{i \neq j} X_i^2X_j -D(D-1))  = \omega( -\frac{D(D-1)(2D-6)}{24},0,\frac{2D-6}{24},0,0,0)
\]
\[
\sum_{i=0}^{16}\sigma_{4,i}X_i  = \omega(0,0,\frac{1}{8},\frac{3}{4},-\frac{1}{4},-\frac{1}{4})
\]
Summing up yields
\[
\begin{array}{rl}
S_{17} &= \displaystyle \omega(\frac{D(D-1)^2}{24} -\frac{D(D-1)(2D-6)}{24},0,0,1,0,0) \\
&= \displaystyle \omega(- \frac{1}{24}D^3 +\frac{1}{4}D^2-\frac{5}{24}D,0,0,1,0,0)  \\
&= \displaystyle \sum_{\{i,j,k\} \textrm{ A.P. in } \mathbb{Z}_{17}} X_iX_jX_k - \lambda.
\end{array}
\]

\end{proof}

To show Theorem \ref{thm:density2} let us first prove two lemmas:

\begin{lemma}
\label{lem:density1}
The following two problems are equivalent:
\begin{itemize}
\item[(a)]
\[
 \max \{ \lambda : \sum_{\{i,j,k\} \textrm{ A.P. in } \mathbb{Z}_p} X_iX_jX_k - \lambda = S \} 
 \] 
where
\[
\begin{array}{rl}
S =&  \displaystyle \sum_{i=0}^{p-1} \sum_{j=0}^{p-1} (\sum_{k=1}^{p-1}a_{ijk}X_{k})^2 X_i  + b \sum_{i=0}^{p-1} (DX_i - \sum_{j=0}^{p-1} X_j)^2 X_i \\
&+  c( \sum_{i=0}^{p-1}X_i^3 -D) + \displaystyle d(\sum_{i\neq j}X_i^2X_j - D(D-1))
\end{array}
\]
for $a_{ijk} \in \mathbb{R},c,d \in \mathbb{R}$ and $b \geq 0$.
\item[(b')]
Let $r$ be a primitive root of $p$.
\[
\max \{   b(D - 1 -u_0/b   )D(D-1) : U \succeq 0, Vu=v \} 
\] 
where $U$ is given by
\[ 
\left[ \begin{array}{ccccccc}
b(1-D)^2 & b(1-D) & b(1-D)& \cdots & b(1-D) & \cdots & b(1-D) \\
b(1-D) & u_0 & u_1 & \dots & u_\frac{p-1}{2} & \dots & u_1 \\
b(1-D) & u_1 & \ddots & \ddots & \ddots & \ddots & \vdots \\ 
\vdots & \vdots & \ddots & \ddots & \ddots & \ddots &  u_\frac{p-1}{2}\\ 
b(1-D) & u_{\frac{p-1}{2}} & \ddots & \ddots & \ddots & \ddots & \vdots  \\ 
\vdots & \vdots & \ddots & \ddots & \ddots & \ddots &  u_1 \\ 
b(1-D) & u_1 & \cdots & u_\frac{p-1}{2} & \cdots & u_1 & u_0  \\ 
\end{array} \right],
\]
$V$ is element-wise given by
\[
V_{ij} = \Big| \Big\{ \{0,1,r^i\} : \{0,1,r^i\}=\{0,r^t,r^{j+t}\} \textrm{ for } t=0,\dots,p-2 \Big\} \Big|
\]
for all $i,j \in \{0,\dots,p-1 \}$, $u$ is the vector
\[
 [u_0,u_1,\dots,u_\frac{p-3}{2},u_\frac{p-1}{2},u_\frac{p-3}{2},\dots,u_1]^T
\]
and $v$ is element-wise given by
\[ 
v_i = \left\{\begin{array}{rl} 1 & \displaystyle \textrm{ if } r^i=2 \\ 0 & \textrm{ otherwise.} \end{array}\right.
\]
for $i \in \{0,\dots,p-1\}$.
\end{itemize}
\end{lemma}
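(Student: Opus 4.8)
The plan is to pass to an affine-invariant certificate, at which point the whole datum of (a) collapses to the single matrix $U$ of (b'), and then to match the $\omega$-coordinates of the two sides of the identity. The group of affine maps $x\mapsto ax+b$ on $\mathbb{Z}_p$ fixes $\sum_{\{i,j,k\}\textrm{ A.P.}}X_iX_jX_k$, fixes the constraint polynomials $\sum_iX_i^3-D$ and $\sum_{i\neq j}X_i^2X_j-D(D-1)$, fixes $\sum_i(DX_i-\sum_jX_j)^2X_i$ (each summand is carried to another), and preserves the cone of sums of squares. Hence, exactly as in Lemma~\ref{lem:groupaverage}, averaging the defining identity of an optimal certificate of (a) over this group produces an invariant certificate with the same value $\lambda$ and of the same shape, so it suffices to optimise (a) over invariant certificates.

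For an invariant certificate the multiplier of $X_i$ is the affine translate of the multiplier of $X_0$, and the multiplier of $X_0$ is invariant under the stabiliser of $0$, the cyclic group $\{x\mapsto ax : a\neq 0\}$ of order $p-1$ acting on the variables $X_1,\dots,X_{p-1}$ as a single $(p-1)$-cycle. Its Gram matrix is therefore a symmetric circulant, hence determined by the vector $u=[u_0,\dots,u_{(p-3)/2},u_{(p-1)/2},u_{(p-3)/2},\dots,u_1]^T$ with the built-in symmetry $u_j=u_{p-1-j}$. Adding the rank-one form $b\big((D-1)X_0-\sum_{j\neq0}X_j\big)^2$ contributed by the $b$-term assembles the total multiplier of $X_0$, viewed as a quadratic form in all $p$ variables $X_0,\dots,X_{p-1}$, into precisely the bordered circulant matrix $U$ of the statement (the border of entries $b(1-D)$ and corner $b(1-D)^2$ coming from the rank-one piece, the circulant block from the sum-of-squares part plus the all-ones contribution $bJ$), and $U\succeq0$ is equivalent to that form being a sum of squares. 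This is the step that turns (a) into the semidefinite program in $b$ and $u$ appearing in (b').

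It remains to read off $\lambda$ and the constraints on $u$. Everything in $S$ except $-cD-dD(D-1)$ is homogeneous of degree $3$, so the constant $\omega$-coordinate gives $\lambda=cD+dD(D-1)$. The coefficient of $X_i^3$ on the left is $0$; since the sum-of-squares multiplier of $X_0$ carries no $X_0$, this forces $c=-b(D-1)^2$, and the coefficient of $X_i^2X_j$ being $0$ forces $d=-(u_0-b)-b(3-2D)=2b(D-1)-u_0$; substituting into $\lambda=cD+dD(D-1)$ collapses to $\lambda=\big(b(D-1)-u_0\big)D(D-1)=b(D-1-u_0/b)D(D-1)$, the objective of (b'). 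For the distinct triples the left side has coefficient $1$ on the arithmetic-progression orbit and $0$ on every other orbit; the $b$-term contributes the same amount $6b$ to every orbit, which is absorbed into the shift of the genuine sum-of-squares entries, and the sum-of-squares part contributes to the orbit of the $3$-set $\{0,1,r^i\}$ exactly the combination $\sum_jV_{ij}u_j$ dictated by the combinatorics recorded in $V$ (how often that $3$-set is realised by a circulant shift pattern $\{0,r^t,r^{j+t}\}$). Since $\{0,1,2\}$ sits in the slot $r^i=2$, this system of equations is precisely $Vu=v$. Thus every invariant certificate of (a) yields a feasible point of (b') with the same value, and conversely a feasible $(U,u,b)$ of (b') supplies, through a spectral decomposition $U=\sum_sw_sw_s^T$, sums of squares realising the $a_{ijk}$ together with the scalar $b$, with $c,d,\lambda$ defined by the formulas above; $Vu=v$ then makes the identity hold. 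Combined with the averaging reduction this gives equality of the optimal values of (a) and (b').

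I expect the genuine work to lie in the last step of the previous paragraph: fixing coordinates on $\mathbb{Z}_p$ that link its additive and multiplicative structure (the reindexing in the spirit of the variables $X_{r^{i+j}+1}$ that appear in the small-prime certificates of Theorem~\ref{thm:smallprimes}), so that the circulant entries $u_j$ land on the $3$-set-orbit coefficients through exactly the matrix $V$ of the statement, and carefully handling the degenerate slots --- $r^i=1$, and the reflection fixed point $r^i=-1$ where the symmetry $u_j=u_{p-1-j}$ makes two shift patterns coincide --- which is why $V$ and $u$ are written with those particular multiplicities. The symmetry reduction and the three coefficient identities carry all the conceptual content; the rest is bookkeeping.
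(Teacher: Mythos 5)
Your plan tracks the paper's proof closely: the same reduction to affine-invariant certificates via group averaging, the same identification of the $X_0$-multiplier's Gram matrix as a symmetric circulant under the cyclic action of $\mathbb{Z}_p^\times$, the same assembly of that circulant with the rank-one $b$-term into the bordered matrix $U$ (shifting $u_j' \mapsto u_j'+b$ to absorb the all-ones block), and the same coefficient-matching to read off $c = -b(D-1)^2$, $d = 2b(D-1)-u_0$, $\lambda = b(D-1-u_0/b)D(D-1)$, and $Vu = v$. Your parenthetical claim that the $b$-term contributes $6b$ per orbit and is wholly absorbed by the shift is stated a bit loosely (it relies on how the row sums of $V$ interact with the $\omega$-normalization, which is exactly the bookkeeping you defer), but the paper is equally terse on this point, so the proposal and the paper's proof are essentially the same argument.
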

\begin{proof}
The polynomial $ \sum_{i=0}^{p-1} \sum_{j=0}^{p-1} (\sum_{k=1}^{p-1}a_{ijk}X_{k})^2 X_i $ in Problem (a) can be written in matrix form. There is a unique positive semidefinite matrix $U_i'$ such that if $\hat{X}_i = [X_0,\dots,X_{i-1},X_{i+1},\dots,X_{p-1}]$ then
\[ 
\hat{X}_i^T U_i' \hat{X}_i = \sum_{j=0}^{p-1} (\sum_{k=1}^{p-1}a_{ijk}X_{k})^2,
\] 
and thus
\[
\begin{array}{rl}
S =& \displaystyle \sum_{i=0}^{p-1}\hat{X}_i^TU_i'\hat{X}_iX_i +b \sum_{i=0}^{p-1} (DX_i - \sum_{j=0}^{p-1} X_j)^2 X_i \\
& \displaystyle + c(\sum_{i=0}^{p-1} X_i^3 - D) + d(\sum_{i \neq j} X_i^2X_j -D(D-1)).
\end{array}
\]

As arithmetic progressions are invariant under affine transformations; if $(a,b) \in \mathbb{Z} \rtimes \mathbb{Z}^+$, and $\{i,j,k\}$ is an arithmetic progression, then $(a,b) \cdot \{i,j,k\}=\{a+bi,a+bj,a+bk\}$ is also an arithmetic progressions; $\sum_{i=0}^{p-1}\hat{X}_i^TU_i'\hat{X}_iX_i$ can also be assumed to be invariant under affine transformations by Lemma \ref{lem:groupaverage}. We get all elements of an orbit by first considering the orbit of the multiplicative action on $\hat{X}_0^TU_0'\hat{X}_0X_0$, and then translate by the additive action to get the elements in $\hat{X}_a^TU_a'\hat{X}_aX_a$ for all $a \in \{0,\dots,p-1\}$. Since $\hat{X}_a^TU_a'\hat{X}_aX_a$ is just a permutation of $\hat{X}_0^TU_0'\hat{X}_0X_0$ it follows that $U_a' \succeq 0$ if and only if $U_0' \succeq 0$, hence requiring that $U_0'$ is positive semidefinite is sufficient. Let now $X = [X_0,X_{r^0},X_{r^1},\dots,X_{r^{p-2}}]^T$ where $r$ is a primitive root of $p$. Since $\hat{X}_0^TU_0'\hat{X}_0X_0$ is invariant under the multiplicative group it follows $U_0'(r^i,r^j)=U_0'(r^{i+k},r^{j+k})=U_0'(r^j,r^i)=U_0'(r^i,r^{2i-j})=U_0'(r^{i+k},r^{2i-j+k})$ for $k=1,\dots,p-2$ and hence
\[
\begin{array}{rl}
\displaystyle \hat{X}_0^TU_0'\hat{X}_0X_0 & \displaystyle = X^T\left[ \begin{array}{ccccccc}
0 & \cdots & \cdots & \cdots & \cdots & \cdots & 0 \\
\vdots & u_0' & u_1' & \dots & u_\frac{p-1}{2}' & \dots & u_1' \\
\vdots & u_1' & \ddots & \ddots & \ddots & \ddots & \vdots \\ 
\vdots & \vdots & \ddots & \ddots & \ddots & \ddots &  u_\frac{p-1}{2}' \\ 
\vdots & u_{\frac{p-1}{2}}' & \ddots & \ddots & \ddots & \ddots & \vdots  \\ 
\vdots & \vdots & \ddots & \ddots & \ddots & \ddots &  u_1' \\ 
0 & u_1' & \cdots & u_\frac{p-1}{2}' & \cdots & u_1' & u_0'  \\ 
\end{array} \right]XX_0 \\
&\displaystyle =X^TU'XX_0.
\end{array}
\]
We see that if we let $u_i= u_i'+b$ we get
\[
S = X^TUX  +   c( \sum_{i=0}^{p-1}X_i^3 -D) + \displaystyle d(\sum_{i\neq j}X_i^2X_j - D(D-1))
\]
where
\[ 
U =\left[ \begin{array}{ccccccc}
b(1-D)^2 & b(1-D) & b(1-D)& \cdots & b(1-D) & \cdots & b(1-D) \\
b(1-D) & u_0 & u_1 & \dots & u_\frac{p-1}{2} & \dots & u_1 \\
b(1-D) & u_1 & \ddots & \ddots & \ddots & \ddots & \vdots \\ 
\vdots & \vdots & \ddots & \ddots & \ddots & \ddots &  u_\frac{p-1}{2}\\ 
b(1-D) & u_{\frac{p-1}{2}} & \ddots & \ddots & \ddots & \ddots & \vdots  \\ 
\vdots & \vdots & \ddots & \ddots & \ddots & \ddots &  u_1 \\ 
b(1-D) & u_1 & \cdots & u_\frac{p-1}{2} & \cdots & u_1 & u_0  \\ 
\end{array} \right].
\]
Since
\[
\sum_{\{i,j,k\} \textrm{ A.P. in } \mathbb{Z}_p} X_iX_jX_k - \lambda = S
\]
we conclude that $b(1-D)^2+c=0$, $2b(1-D)+u_0+d=0$ and that $Vu=v$, where 
\[
V_{ij} = \Big| \Big\{ \{0,1,r^i\} : \{0,1,r^i\}=\{0,r^t,r^{j+t}\} \textrm{ for } t=0,\dots,p-2 \Big\}  \Big\} \Big|
\]
for all $i,j \in \{0,\dots,p-1 \}$,
\[
u = [u_0,u_1,\dots,u_\frac{p-3}{2},u_\frac{p-1}{2},u_\frac{p-3}{2},\dots,u_1]^T
\]
and
\[ 
v_i = \left\{\begin{array}{rl} 1 & \displaystyle \textrm{ if } r^i=2 \\ 0 & \textrm{ otherwise.} \end{array}\right.
\]
for $i \in \{0,\dots,p-1\}$.

Finally we see that
\[
-\lambda = -cD+-dD(D-1)= b(1-D)^2D+(2b(1-D)+u_0)D(D-1),
\]
and simplifying gives the desired objective function:
\[
\begin{array}{rl}
\lambda &= -b(1-D)^2D-(2b(1-D)+u_0)D(D-1)\\
&= (-b(D-1) +  2b(D-1) -u_0   )D(D-1) \\
&= b(D - 1 -u_0/b   )D(D-1).
\end{array}
\]

\end{proof}

\begin{lemma}
\label{lem:density2}
Let $r$ be a primitive root of $p$ and $b \in \mathbb{R}$. Let further
\[
V_{ij} = \Big| \Big\{ \{0,1,r^i\} : \{0,1,r^i\}=\{0,r^t,r^{j+t}\} \textrm{ for } t=0,\dots,p-2 \Big\} \Big|
\]
for all $i,j \in \{0,\dots,p-1 \}$,
\[
u = [u_0,u_1,\dots,u_\frac{p-3}{2},u_\frac{p-1}{2},u_\frac{p-3}{2},\dots,u_1]^T,
\]
\[
u_+ = 1^Tu=u_0+2u_1+\dots+2u_{(p-3)/2}+u_{(p-1)/2}
\]
and
\[ 
v_i = \left\{\begin{array}{rl} 1 & \displaystyle \textrm{ if } r^i=2 \\ 0 & \textrm{ otherwise.} \end{array}\right.
\]
for $i \in \{0,\dots,p-1\}$.
The following two problems are equivalent:
\begin{itemize}
\item[(b')]
\[
\max \{  b(D - 1 -u_0/b   )D(D-1) : U \succeq 0, Vu=v \} 
\] 
where $U$ is given by
\[ 
\left[ \begin{array}{ccccccc}
b(1-D)^2 & b(1-D) & b(1-D)& \cdots & b(1-D) & \cdots & b(1-D) \\
b(1-D) & u_0 & u_1 & \dots & u_\frac{p-1}{2} & \dots & u_1 \\
b(1-D) & u_1 & \ddots & \ddots & \ddots & \ddots & \vdots \\ 
\vdots & \vdots & \ddots & \ddots & \ddots & \ddots &  u_\frac{p-1}{2}\\ 
b(1-D) & u_{\frac{p-1}{2}} & \ddots & \ddots & \ddots & \ddots & \vdots  \\ 
\vdots & \vdots & \ddots & \ddots & \ddots & \ddots &  u_1 \\ 
b(1-D) & u_1 & \cdots & u_\frac{p-1}{2} & \cdots & u_1 & u_0  \\ 
\end{array} \right],
\]
\item[(b)]
\[
\max \{ \frac{u_+}{p-1}(D - 1 -\frac{u_0}{u_+}(p-1)   )D(D-1) : Cu \geq 0, Vu=v \} 
\]
where
\[ 
C_{ij} =cos(\frac{2 \pi (i-1)(j-1)}{p-1}).
\]
\end{itemize}
\end{lemma}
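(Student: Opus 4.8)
The key observation is that the lower-right $(p-1)\times(p-1)$ block of $U$ is the symmetric circulant matrix generated by the vector $u=[u_0,u_1,\dots,u_{\frac{p-3}{2}},u_{\frac{p-1}{2}},u_{\frac{p-3}{2}},\dots,u_1]^T$; call it $\mathrm{Circ}(u)$. Such a matrix is diagonalised by the real (cosine) Fourier basis of $\mathbb{R}^{p-1}$, its eigenvalues are exactly the entries of $Cu$, and the all-ones vector $\mathbf{1}$ is an eigenvector belonging to the eigenvalue $u_+=\mathbf 1^Tu$. Moreover the border added to $\mathrm{Circ}(u)$ to form $U$ is $b(1-D)\mathbf 1$ in the last $p-1$ coordinates and $b(1-D)^2$ in the corner, so the bordering couples the first coordinate only to the $\mathbf 1$-eigenspace of $\mathrm{Circ}(u)$.

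First I would make this precise by conjugating $U$ with $\operatorname{diag}(1,F)$, where $F$ is an orthogonal matrix whose first column is $\mathbf 1/\sqrt{p-1}$ and which diagonalises $\mathrm{Circ}(u)$. After reordering, this produces the block-diagonal matrix
\[
\begin{pmatrix} b(1-D)^2 & b(1-D)\sqrt{p-1}\\ b(1-D)\sqrt{p-1} & u_+\end{pmatrix}\ \oplus\ \operatorname{diag}(\mu_1,\dots,\mu_{p-2}),
\]
where $\mu_1,\dots,\mu_{p-2}$ are the remaining eigenvalues of $\mathrm{Circ}(u)$. Hence $U\succeq 0$ if and only if each $\mu_\ell\ge 0$ and the displayed $2\times 2$ block is positive semidefinite. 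The first condition, together with $u_+=\mu_0\ge 0$, is exactly $Cu\ge 0$ (the row of $C$ attached to the constant mode evaluates $u_+$); the second, assuming $D\neq 1$ so that $(1-D)^2>0$, reduces via the determinant to $b\ge 0$ and $b(1-D)^2\bigl(u_+-b(p-1)\bigr)\ge 0$, i.e. $0\le b(p-1)\le u_+$. (If $D\in\{0,1\}$ both objectives vanish identically, so the problems are trivially equivalent and we may assume $D\ge 2$.) In particular the set of $u$ that occur in a feasible pair $(u,b)$ of (b') is precisely $\{u:Vu=v,\ Cu\ge 0\}$, the feasible set of (b).

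Next I would eliminate $b$. Written in simplified form, the objective of (b') is $b\bigl(D-1-u_0/b\bigr)D(D-1)=\bigl(b(D-1)-u_0\bigr)D(D-1)$, which is affine in $b$ with slope $D(D-1)^2\ge 0$. Therefore, for each feasible $u$, the maximum over $b\in[0,u_+/(p-1)]$ is attained at $b=u_+/(p-1)$ and equals
\[
\Bigl(\tfrac{u_+}{p-1}(D-1)-u_0\Bigr)D(D-1)=\frac{u_+}{p-1}\Bigl(D-1-\frac{u_0}{u_+}(p-1)\Bigr)D(D-1),
\]
the objective of (b) (read in the same simplified form when $u_+=0$). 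Taking the maximum over all $u$ with $Cu\ge 0$, $Vu=v$ shows that (b') and (b) have the same optimal value.

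The step I expect to demand the most care is the spectral bookkeeping in the block-diagonalisation: verifying that the rows of $C$, with the indexing in the statement, list all eigenvalues of the symmetric circulant $\mathrm{Circ}(u)$; tracking the normalising factor $\sqrt{p-1}$ that appears in $\mathbf 1^TF$; and establishing positive semidefiniteness of the $2\times 2$ border block without assuming $\mathrm{Circ}(u)$ is invertible, which is why I would work with the explicit block-diagonal form rather than a Schur complement.
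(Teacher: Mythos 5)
Your proposal is correct, and it rests on the same underlying idea as the paper's proof: the lower-right block of $U$ is circulant, and the all-ones eigenvector is the only direction that couples to the first coordinate. The bookkeeping is genuinely different, though. The paper writes $U = U' + b\,ww^T$ with $w = [(1-D),1,\dots,1]^T$, so that $U'$ has vanishing border and its circulant block is $U'' = \mathrm{Circ}(u-b\mathbf{1})$; it then asserts ``$U \succeq 0 \iff U'' \succeq 0$'' (a Schur-complement statement, legitimate when $b>0$ and $D\neq1$), notes that the constant-mode eigenvalue of $U''$ is $u_+ - b(p-1)$ while the non-constant eigenvalues depend on $u$ alone because $\sum_{k}\cos(2\pi jk/(p-1))=0$ for $j\neq 0$, and maximizes $b$ subject to $\lambda_0\ge 0$. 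You instead conjugate $U$ by $\mathrm{diag}(1,F)$ and isolate an explicit $2\times2$ border block $\begin{pmatrix}b(1-D)^2 & b(1-D)\sqrt{p-1}\\ b(1-D)\sqrt{p-1} & u_+\end{pmatrix}$ alongside $\mathrm{diag}(\mu_1,\dots,\mu_{p-2})$. This is a bit cleaner: it gives a PSD characterization valid for all $b$ without invoking a Schur complement, handles the degenerate cases $b=0$ and $D\in\{0,1\}$ uniformly, keeps the $\sqrt{p-1}$ normalization in plain view, and never needs to translate between $u$ and $u-b\mathbf{1}$ in the eigenvalue formulas. Both routes land on the feasibility condition $0\le b(p-1)\le u_+$ together with $Cu\ge 0$, and both observe the objective is affine and increasing in $b$, so the optimum is at $b = u_+/(p-1)$, giving the objective of (b). Your version fills in precisely the steps the paper elides under ``it is easy to see,'' which is a genuine improvement in rigor even if not a different idea.
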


\begin{proof}
As in the previous lemma, let
\[
U'=U-\left[ \begin{array}{ccccccc}
b(1-D)^2 & b(1-D) & \cdots & b(1-D) \\
b(1-D) & b & \cdots & b \\
\vdots & \vdots & \ddots & \vdots \\ 
b(1-D) & b & \cdots & b 
\end{array} \right],
\]
or in other words
\[
U'= \left[ \begin{array}{ccccccc}
0 & \cdots & \cdots & \cdots & \cdots & \cdots & 0 \\
\vdots & u_0' & u_1' & \dots & u_\frac{p-1}{2}' & \dots & u_1' \\
\vdots & u_1' & \ddots & \ddots & \ddots & \ddots & \vdots \\ 
\vdots & \vdots & \ddots & \ddots & \ddots & \ddots &  u_\frac{p-1}{2}' \\ 
\vdots & u_{\frac{p-1}{2}}' & \ddots & \ddots & \ddots & \ddots & \vdots  \\ 
\vdots & \vdots & \ddots & \ddots & \ddots & \ddots &  u_1' \\ 
0 & u_1' & \cdots & u_\frac{p-1}{2}' & \cdots & u_1' & u_0'  \\ 
\end{array} \right]
\]
where $u_i' = u_i-b$. Maximizing the objective function is equivalent to maximizing $b$ when $D - 1 -u_0/b \geq 0 $ . Let $U''$ be the matrix $U'$ without its first row and column. $U''$ is a circulant matrix, i.e. row $i$ is the first row shifted $i-1$ steps to the right. It is easy to see that $U$ is positive semidefinite if and only $U''$ is positive semidefinite. $U''$ is positive semidefinite if and only if all its eigenvalues are nonnegative. Let $\omega_j = e^{\frac{2\pi i j}{p-1}}$, where $i=\sqrt{-1}$. The eigenvalues of a circulant matrix with the first row $[k_0,k_{p-2},k_{p-3},\dots,k_1]$ are known to be on the form $\lambda_j =k_0+k_{p-2}\omega_j + k_{p-3}\omega_j^2+ \dots + k_1\omega_j^{p-2}$ for $j=0,\dots,p-2$. In our case $u'_i=u'_{p-1-i}$ for $i=1,\dots,\frac{p-3}{2}$. By Euler's formula we get
\[
\begin{array}{rcl}
\displaystyle \lambda_j &=& \displaystyle u'_0+u'_{1}(\omega_j+\omega_j^{p-2})  +\dots + u'_{\frac{p-3}{2}}(\omega_j^{\frac{p-3}{2}}+\omega_j^{\frac{p+1}{2}})+u'_{\frac{p-1}{2}}(\omega_j^{\frac{p-1}{2}})\\
&=& \displaystyle u'_0+2u'_{1}\cos(\frac{1 \cdot 2 \pi j}{p-1}) +\dots + 2u'_{\frac{p-3}{2}}\cos(\frac{\frac{p-3}{2} \cdot 2 \pi j}{p-1}) \\ &&+u'_{\frac{p-1}{2}}\cos(\frac{\frac{p-1}{2} \cdot 2 \pi j}{p-1}).
\end{array}
\]
 $\lambda_j = \lambda_{p-1-j}$ since $\cos(x)$ is an even function, thus it follows that all eigenvalues are nonnegative is equivalent to that
 \[
 [\cos(\frac{0 \cdot 2 \pi j}{p-1}), \dots,\cos(\frac{\frac{p-1}{2} \cdot 2 \pi j}{p-1})]u \geq 0
 \]
  for $j=1,\dots,\frac{p-1}{2}$, which is equivalent to the condition $Cu \geq 0$ in Problem (b). 

The eigenvalue corresponding to to the eigenvector $[1,\dots,1]$ in $U''$ is $\lambda_0=\sum_i^{p-1}u'_i$. We get the maximal value of $b$ when $u_i-u_i'$ is maximized, that is when $\lambda_0=0$ and thus
\[
b= \frac{[1,\dots,1]u}{p-1}=\frac{u_0+2u_1+\dots+2u_{(p-3)/2}+u_{(p-1)/2}}{p-1}.
\]

\end{proof}

\begin{proof}[Proof of Theorem \ref{thm:density2}]
Lemma \ref{lem:density1} shows that problems (a) and (b') are equivalent and Lemma \ref{lem:density2} shows that problems (b') and (b) are equivalent. It immediately follows that problems (a) and (b) are equivalent.
\end{proof}

\begin{proof}[Proof of Corollary \ref{cor:density}]
Let 
\[
B=\min \{D \in \mathbb{Z_+} : \lambda_p(D) > 0 \}.
\]
To show that
\[
B \leq \frac{p+3}{2}
\]
let us use the fact that the algebraic certificate in Theorem \ref{thm:density} is on the desired form. We have 
\[
\begin{array}{rl}
B  &\leq \displaystyle \min \{D \in \mathbb{Z_+} : \frac{D^3 - (\frac{p+3}{2})D^2+(\frac{p+3}{2}-1)D}{p-1} > 0 \} \\
&= \displaystyle \min \{D \in \mathbb{Z_+} : \frac{D(D-1)(D-\frac{p+1}{2})}{p-1} > 0 \} \\
&= \displaystyle  \frac{p+3}{2}.
\end{array}
\]
On the other hand, to show that 
\[
B \geq \left\lceil \frac{p+3}{4} \right\rceil
\]
note that by Theorem \ref{thm:density2} we have
\[
B = \min \{D \in \mathbb{Z_+} : \frac{u_+}{p-1}(D - 1 -\frac{u_0}{u_+}(p-1)   )D(D-1) \}.
\]
By the matrix condition $Vu=v$ we know that the sum of the $u_i$ with $i>0$ which do not contribute to an arithmetic progression is zero. Furthermore, if $t$ and $s$ are the unique integer such that $r^t=2$ and $r^s=\frac{p-1}{2}$, then $u_t$s orbit contributes to $4$ arithmetic progressions and $u_s$s orbit contributes to $2$ arithmetic progressions respectively. In other words $4u_t+2u_s=1$, and hence
\[
u_+ =u_0+ 2u_t+u_s = u_0 + \frac{1}{2},
\]
so
\[
B =  \min \{D \in \mathbb{Z_+} :\frac{u_+}{p-1}(D - 1 -\frac{u_0}{u_0 + \frac{1}{2}}(p-1)   )D(D-1) \}.
\]
To find a lower bound to $B$ we should find a lower bound for $\frac{u_0}{u_0 + \frac{1}{2}}$. Since $u_0 \geq \max_{i=1}^{p-1} u_i$ in order for $U$ to be positive semidefinite and since $4u_t+2u_s=1$ we have $u_0 \geq \frac{1}{6}$. Hence
\[
\begin{array}{rl}
\displaystyle B \geq& \displaystyle \min \{D \in \mathbb{Z_+} :\frac{u_+}{p-1}(D - 1 -\frac{\frac{1}{6}}{\frac{1}{6} + \frac{3}{6}}(p-1)   )D(D-1) \}. \\
 =& \displaystyle  \lceil 1 + \frac{1}{4}(p-1) \rceil
 \end{array}
\]

\end{proof}
\section{Additional numerical results}
\label{sec:numerical2}
We can easily generate the list of all necklaces of length $n$ with $n \delta$ ones using the recursive formula introduced in Section \ref{sec:graycode}. We can use the list to find $W(k,\mathbb{Z}_n,\delta)$, we merely need to count the number of arithmetic progressions in each necklace and find the minimum among these. Note that $n$ does not have to be prime as long as we are cautious when we count arithmetic progressions to avoid double counting and degenerate arithmetic progressions.

We have found exact solutions using fixed density necklaces for $W(k,\mathbb{Z}_n,\delta)$ for $3 \leq k \leq 5$, $ 5\leq n \leq 32$ and $\delta \in \{0,\frac{1}{n},\dots,\frac{n-1}{n},1\}$. To find the exact solution we used the code for finding fixed density necklaces by Sawada \cite{Sawada2012b}, and code implemented in matlab by the author for counting the number of arithmetic progressions in every necklace. The code was implemented on the Triton computer cluster, which is part of the Aalto Science-IT project, and we used 100 computer nodes for 5 days to carry out the calculations. The results can be found in Tables \ref{tab:bubble3_1}, \ref{tab:bubble3_2}, \ref{tab:bubble4_1}, \ref{tab:bubble4_2}, \ref{tab:bubble5_1} and \ref{tab:bubble5_2} in the appendix.

Using the degree 3 relaxation of the Lasserre hierarchy we have found lower bounds to $W(3,\mathbb{Z}_p,\delta)$ where $p \leq 300$ is a prime and 
\[
\delta \in \{0, \frac{1}{20p}, \frac{2}{20p}, \dots, \frac{20p-1}{20p}, 1\}.
\]
 For $300 \leq p \leq 613$ we start to run into numerical problems for larger $\delta$, which might be because of our particular choice of solver, so then we have fewer and perhaps less accurate data points. The results for $\delta < \frac{1}{3}$ seems accurate in this interval, so we feel confident using this data for analysis on the asymptotic behavior of the degree 3 relaxation.

Using the degree 5 relaxation of the Lasserre hierarchy we have found lower bounds to $W(k,\mathbb{Z}_p,\delta)$ for primes $p \leq 19$, $k=3,4,5$ and  
\[
\delta \in \{0,\frac{1}{20p}, \frac{2}{20p}, \dots,\frac{20p-1}{20p}, 1\}.
\]

In Figure \ref{fig:minAP1} we show the results for $k=3$ and $D=\delta p \in \{0,\dots,17\}$ when $p=17$, for which we have numerical data for all different relaxations we make, as well as a lower bound from Theorem \ref{thm:smallprimes}. In Figure \ref{fig:minAP2} we showed a zoomed in version around where $W(3,\mathbb{Z}_{17},\delta)$ becomes positive. For small values of $D$ and $p$ we can find algebraic solutions to check when the points coincide, unfortunately this is generally not the case for the larger values of $D$ and $p$ which we are more interested in. The case $p=17$ is a good representative for the situation for all small primes, and hence the numerical certificates as well as the algebraic certificates one can get from Theorem \ref{thm:density2} work very well here. The problem is that the larger $p$ is the worse the relaxations will get. We discuss this further in Section \ref{sec:discussion}.

\begin{figure}
\includegraphics{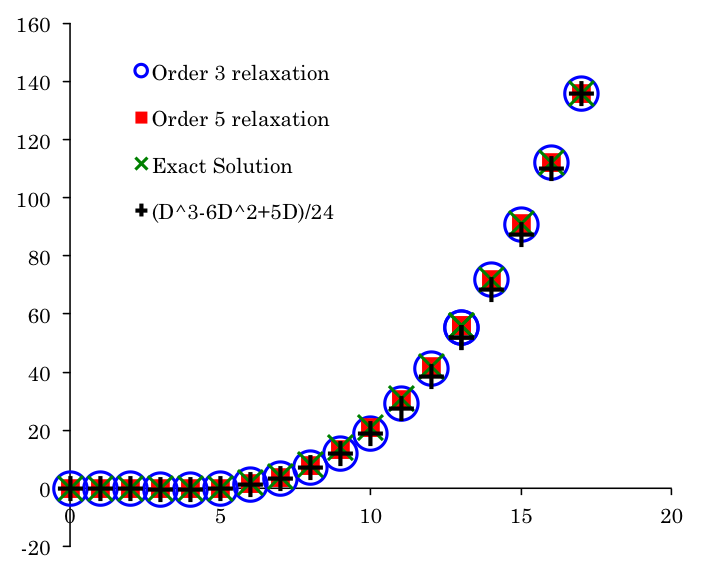}
\caption{The minimum number of arithmetic progressions of length 3 as a function of the density in $\mathbb{Z}_{17}$.}
\label{fig:minAP1}
\end{figure}

\begin{figure}
\includegraphics{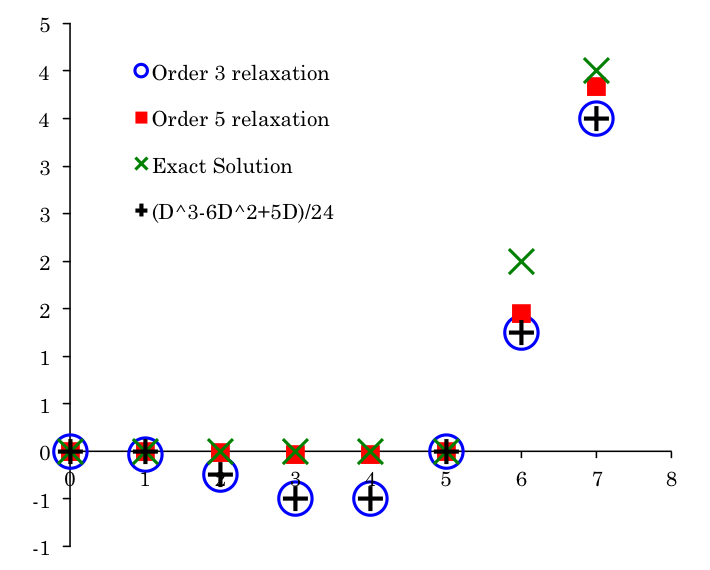}
\caption{A zoomed in version of Figure \ref{fig:minAP1}.}
\label{fig:minAP2}
\end{figure}

We also provide a list of one necklace per $D$ that achieve the lower bound as a certificate in Tables \ref{tab:smallaces1} and \ref{tab:smallaces2}. There is recent research on how to find necklaces with less arithmetic progressions than average for higher $p$ \cite{Butler2010,Lu2012}. In both papers they analyze the problem of avoiding arithmetic progressions in a $2$-coloring of $[n]$, which is closely related to the fixed-density case, and it is likely that similar constructions are applicable here.

\begin{table}
\begin{center}
\begin{tabular}{ccc}
{\bf D} & {\bf A.P.s} & {\bf smallest necklace} \\
0 & 0 & 0000000 \\
1 & 0 & 0000001 \\
2 & 0 & 0000011 \\
3 & 0 & 0001011 \\
4 & 2 & 0001111 \\
5 & 6 & 0011111 \\
6 & 12 & 0111111 \\
7 & 21 & 1111111 \\
\end{tabular}
\caption{Necklaces that contains the least number of arithmetic progressions for each $D$ in $\mathbb{Z}_{7}$.}
\label{tab:smallaces1}
\end{center}
\end{table}

\begin{table}
\begin{center}
\begin{tabular}{ccc}
{\bf D} & {\bf A.P.s} & {\bf smallest necklace} \\
0 & 0 & 0000000000000000000000000000000 \\
1 & 0 & 0000000000000000000000000000001 \\
2 & 0 & 0000000000000000000000000000011 \\
3 & 0 & 0000000000000000000000000001011 \\
4 & 0 & 0000000000000000000000000011011 \\
5 & 0 & 0000000000000000000000101100011 \\
6 & 0 & 0000000000000000000010110001011 \\
7 & 0 & 0000000000000000001011000011011 \\
8 & 0 & 0000000000000000011011000011011 \\
9 & 2 & 0000000000000001011010001011011 \\
10 & 5 & 0000000000000001101101000110111 \\
11 & 9 & 0000000000000101011110001011011 \\
12 & 14 & 0000000000000001111110000111111 \\
13 & 19 & 0000000001010011110001101110011 \\
14 & 27 & 0000000001101011110001011011011 \\
15 & 37 & 0000000000000111111100011111111 \\
16 & 48 & 0000000000000111111110011111111 \\
17 & 61 & 0000000000001111111100111111111 \\
18 & 76 & 0000000000001111111101111111111 \\
19 & 92 & 0000000000011111111101111111111 \\
20 & 110 & 0000000000011111111111111111111 \\
21 & 130 & 0000000000111111111111111111111 \\
22 & 152 & 0000000001111111111111111111111 \\
23 & 177 & 0000000011111111111111111111111 \\
24 & 204 & 0000000111111111111111111111111 \\
25 & 234 & 0000001111111111111111111111111 \\
26 & 266 & 0000011111111111111111111111111 \\
27 & 301 & 0000111111111111111111111111111 \\
28 & 338 & 0001111111111111111111111111111 \\
29 & 378 & 0011111111111111111111111111111 \\
30 & 420 & 0111111111111111111111111111111 \\
31 & 465 & 1111111111111111111111111111111
\end{tabular}
\caption{Necklaces that contains the least number of arithmetic progressions for each $D$ in $\mathbb{Z}_{31}$.}
\label{tab:smallaces2}
\end{center}
\end{table}

 It is shown in the first paper that there exists colorings in which the colors come in large blocks that provide much better bounds than random colorings, and in the second paper it is shown that periodic colorings can provide even better bounds. For computational reasons we are not able to do examples for $p$ high enough to make reasonable conjectures how fixed-density sets avoiding arithmetic progressions look like, and we have not tried to apply their methods. Note that finding a class of fixed-density necklaces does not help to imply a new proof of Szemer\'edi's theorem directly as we would need a certificate that all fixed-density necklaces have a positive density. Somewhat surprisingly though we have in some cases seen correlations between good classes of colorings and coefficients in the corresponding semidefinite programs for finding a certificate. Finding a good way of coloring groups could thus prove to be useful for finding good SDP relaxations.
 
Another opportunity to analyze what happens as $p$ grows is to look at the distributions of the number of arithmetic progressions for all different necklaces of fixed $p$, $k$ and $\delta$. When for example $p=7$, $k=3$ and $\delta=4/7$ there are $3$ necklaces with $2$ arithmetic progressions, and $2$ necklaces with $3$, clearly $W(7,3,4/7)=2$ and we have the distribution in Figure \ref{fig:allAP1}. More interestingly, for $p=31$, $k=3$ and $\delta=16/31$ we have the situation in Figure \ref{fig:allAP2}; it looks like finding the minimum number of arithmetic progressions can be approximated by analyzing the left tail of a probability distribution. The distribution looks very similar as we vary $p$ and $\delta$, it just gets shifted and has a different height. The end goal is to go from bounds on $\mathbb{Z}_p$ for all prime numbers $p$ to $[n]$ for all integers $n$, and using a theorem similar to Corollary \ref{cor:limit} to get a new proof for Szemer\'edi's theorem. Hence if it would be possible to prove that for any density $\delta > 0$ there exists an $N$ such that for all $p \geq N$ it holds that the left-most non-zero value of the distribution of the number of arithmetic progressions in different necklaces of length $p$ and density $\delta$ is positive it is likely possible that a proof would follow. It is unclear whether the tools for approximating the tail are strong enough to find a new proof of Szemer\'edi's theorem, and even if the tools are sufficient it would most probably be highly non-trivial to find a new proof.

\begin{figure}
\includegraphics{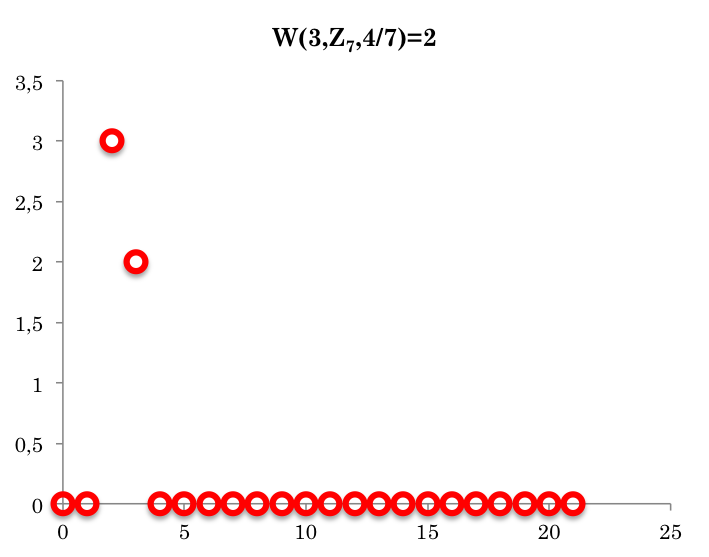}
\caption{The plot shows how many necklaces of length $7$ and density $4/7$ contain $n$ 3-arithmetic progressions for $n=0,\dots,21$.}
\label{fig:allAP1}
\end{figure}

\begin{figure}
\includegraphics{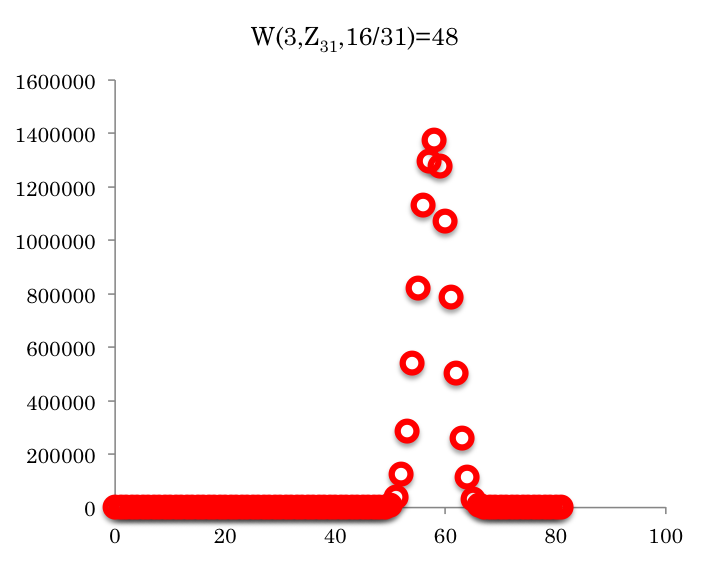}
\caption{The plot shows how many necklaces of length $31$ and density $16/31$ contain $n$ 3-arithmetic progressions for $n=0,\dots,81$.}
\label{fig:allAP2}
\end{figure}

\section{Discussion}
\label{sec:discussion}

\subsection{How Szemer\'edi's theorem could perhaps be generalized for arithmetic progressions of length $3$}
In order to prove Szemer\'edi's theorem by providing bounds 
\[
\lambda(k,\mathbb{Z}_n,\delta) \leq W(k,\mathbb{Z}_n,\delta)
\]
 it is by Corollary \ref{cor:limit}, and sufficiently good tools to translate results between $\mathbb{Z}_n$ and $[n]$, necessary and sufficient to find bounds as sharp as 
\[
\delta_*(k) = \lim_{n \rightarrow \infty} \min \{ \delta : \lambda(k,\mathbb{Z}_n,\delta) > 0\} = 0.
\]
Already finding sharp enough bounds in the case $k=3$ would be a great result. Depending on how fast the convergence is, such bound might additionally improve the current best existence bounds for arithmetic progressions by Gowers \cite{Gowers2001}. It can be shown by a probabilistic argument that $W(k,\mathbb{Z}_n,\delta)$ with fixed $k$ and $n$ behaves as a degree $k$ polynomial when $\delta$ is large enough, but it is not clear whether this should also be the case for small $\delta$. Any additional understanding on how $W(k,\mathbb{Z}_n,\delta)$ looks like for small $\delta$ could possibly benefit in a better understanding on how to restrict polynomials in a relaxation of Putinar's positivstellensatz in the search for an improved lower bound $\lambda(k,\mathbb{Z}_n,\delta)$.

In Figure \ref{fig:allbounds} we have in the same plot put the upper and lower bounds from Conjecture \ref{cor:density} as well as numerical upper bounds from the degree 3 and 5 relaxations of Putinar's positivstellensatz. All methods presented perform very well for small $p$. In the first few cases we can find algebraic solutions verifying that we find the exact solution with our approximation. It seems like using the degree 3 relaxation in its full generality rather than our simplification in Theorem \ref{thm:density} does not make a huge difference, and it seems like we need to increase the degree to make sure $\delta_*$ converge to $0$. We have too few data points of the degree 5 relaxation to make any reasonable conjectures about the convergence of those upper bounds. We suggest as a next step to try to come up with reasonable simplification of the degree 5 relaxation that would allow us to find more data points. How to find general patterns for a simplified degree $5$ relaxation based on numerical data is highly non-trivial, and it is unclear whether a degree $5$ relaxation is sufficient to generalize Szemer\'edi's theorem for $k=3$, or if one will find lower bounds as in Corollary \ref{cor:density}. We know in theory by Putinar's positivstellensatz that if the relaxation is of a degree high enough we will find a quantitative version of Szmer\'edi's theorem, but in practice we have to either limit the degree of the relaxation or add restrictions to get a simpler problem but worse lower bound. It is thus unlikely that the theoretically optimal lower bound will ever be obtained, but one cannot exclude the possibility that a lower bound strong enough to obtain a quantitative version of Szemer\'edi's theorem can be obtained. As we have found algebraic bounds with polynomials of degree 3, it is likely that improved bounds with polynomials of higher degree look similar, or at least share some similarities, with the polynomials of degree 3. Another possibility would thus be to try to find a certificate based on polynomials of high degree with a lot of structure that are similar to the polynomials in Theorem \ref{thm:density}. 

\begin{figure}
\includegraphics{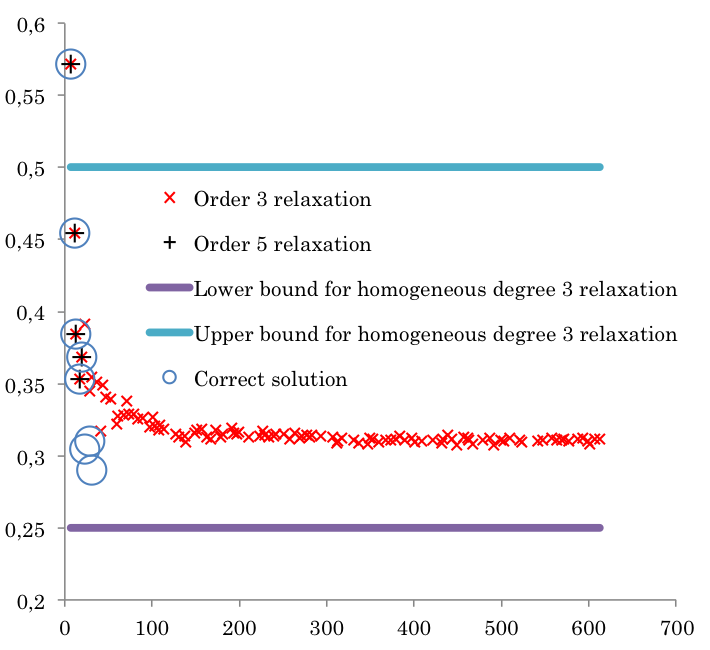}
\caption{Lower and upper bounds from Conjecture \ref{cor:density}, numerical upper bounds from degree 3 and 5 relaxations of Putinar's Positivstellensatz and exact results found using fixed density necklaces.}
\label{fig:allbounds}
\end{figure}

\subsection{Longer arithmetic progressions}

Recall that  $W(k,G,\delta)$ denotes the minimal number of arithmetic progressions in the subset $S \subseteq G$ with $|S| = \delta |G|$. It is easy to see that
\[ 
W(k,G,D/p) = \min \{\sum_{\{a_1,\dots,a_k\} \textrm{ is an A.P. in } G} x_{a_1}\cdots x_{a_k} : x_i \in \{0,1\} , \sum_{i=0}^{p-1} x_i = D\},
\]
and using the methods introduced in Sections \ref{sec:poly} and \ref{sec:Sym} we make the following relaxation to get an optimization problem which if it is solved gives an algebraic certificate for the lower bound:
\[ 
W(3,G,D/|G|) \geq \max \{ \lambda : \sum_{\{a_1,\dots,a_k\} \textrm{ A.P. in } G} X_{a_1}\cdots X_{a_k} - \lambda = S, X \in K \} 
\]
where $K = \{X \in [0,1]^n: \sum_{i=0}^{|G|-1}X_i = D\}$ and $S = \sum_i S_i^2g_i$ for $S_i,g_i \in \mathbb{R}[X_1,\dots,X_k]$ where $g_i$ are the half-spaces defining $K$.

Since $\sum_{i=0}^{|G|-1} x_i = D$ on $\{-1,1\}^n$ it follows that for example $\sum_{i=0}^{|G|-1} x_i^t - D= 0$ for all $t \in \mathbb{Z}_+$, $\sum_{i=0}^{|G|-1} x_i^sx_j^t - D(D-1) = 0$ for all $s \neq t \in \mathbb{Z}_+$, and so on, providing several new possibilities for polynomials that are nonnegative on $\{-1,1\}^n$. Let us denote the set of all possible conditions of this form $\{R_0,R_1,\dots,\}$.

These redundant conditions simplifies the algebraic certificates we get from Putinar's Positivstellensatz, and replacing some of the original $g_i$s to these new constraints could possibly simplify the computations significantly. For any $k$ we can write $S$ on the form 
\[
S=\sum_ia_iS_{i}^2g_i + \sum_j b_jR_j +c,
\]
where $a_i \in \{0,1\}$ are used to simplify the problem, $b_j \in \mathbb{R}$ are constants indicating how much of the redundant condition $R_j$ should be added and $c$ a constant. This is one option to move forward with the suggested methods as the redundant conditions add much more freedom to the problem. As a generalization of Szemer\'edi's theorem would be extremely difficult to accomplish we want to stress that these are just speculations on how to improve the results in this article towards that goal. Even though there are strong theoretical results in real algebraic geometry, the current methods for applying the methods in practice have their limitations. If even possible probably many years of further research in this direction is required to find a quantitative version of Szemer\'edi's theorem.

\section*{Acknowledgements}
I would like to thank Alexander Engstr\"om for introducing me to Szemer\'edi's theorem and for suggesting how the problem could be approached. Additionally I would want to thank Markus Schweighofer and Cynthia Vinzant for their valuable feedback.

\newpage
\section{Appendix}
\newpage

\begin{table}
\begin{center}
\begin{tabular}{c|ccccc|ccccc|ccccc|ccccc}
n $\backslash$ D & 0 & 1 & 2 & 3 & 4 & 5 & 6 & 7 & 8 & 9 & 10 & 11 & 12 & 13 & 14 & 15 & 16 & 17   \\
\hline
5 & 0 & 0 & 0 & 1 & 4 & 10 &  &  &  &  &  &  &  &  &  &  &  &  \\
6 & 0 & 0 & 0 & 0 & 0 & 4 & 8 &  &  &  &  &  &  &  &  &  &  &  \\
7 & 0 & 0 & 0 & 0 & 2 & 6 & 12 & 21 &  &  &  &  &  &  &  &  &  &  \\
8 & 0 & 0 & 0 & 0 & 0 & 3 & 8 & 15 & 28 &  &  &  &  &  &  &  &  &  \\
9 & 0 & 0 & 0 & 0 & 0 & 1 & 2 & 11 & 20 & 30 &  &  &  &  &  &  &  &  \\
\hline
10 & 0 & 0 & 0 & 0 & 0 & 2 & 4 & 10 & 16 & 28 & 45 &  &  &  &  &  &  &  \\
11 & 0 & 0 & 0 & 0 & 0 & 2 & 5 & 11 & 18 & 28 & 40 & 55 &  &  &  &  &  &  \\
12 & 0 & 0 & 0 & 0 & 0 & 1 & 2 & 5 & 8 & 18 & 28 & 39 & 52 &  &  &  &  &  \\
13 & 0 & 0 & 0 & 0 & 0 & 1 & 4 & 8 & 14 & 22 & 32 & 45 & 60 & 78 &  &  &  &  \\
14 & 0 & 0 & 0 & 0 & 0 & 0 & 0 & 4 & 8 & 16 & 24 & 36 & 48 & 66 & 91 &  &  &  \\
\hline
15 & 0 & 0 & 0 & 0 & 0 & 1 & 2 & 5 & 8 & 12 & 20 & 30 & 40 & 58 & 76 & 95 &  &  \\
16 & 0 & 0 & 0 & 0 & 0 & 0 & 0 & 1 & 4 & 11 & 20 & 29 & 40 & 55 & 72 & 91 & 120 &  \\
17 & 0 & 0 & 0 & 0 & 0 & 0 & 2 & 4 & 8 & 14 & 21 & 31 & 42 & 56 & 72 & 91 & 112 & 136 \\
18 & 0 & 0 & 0 & 0 & 0 & 0 & 0 & 0 & 0 & 4 & 8 & 12 & 16 & 34 & 52 & 70 & 88 & 110 \\
19 & 0 & 0 & 0 & 0 & 0 & 0 & 0 & 3 & 6 & 11 & 18 & 26 & 36 & 48 & 62 & 79 & 98 & 120 \\
\hline
20 & 0 & 0 & 0 & 0 & 0 & 0 & 0 & 0 & 0 & 5 & 11 & 19 & 28 & 39 & 51 & 65 & 80 & 104 \\
21 & 0 & 0 & 0 & 0 & 0 & 0 & 0 & 1 & 2 & 3 & 9 & 15 & 22 & 33 & 42 & 59 & 77 & 95 \\
22 & 0 & 0 & 0 & 0 & 0 & 0 & 0 & 0 & 0 & 4 & 8 & 14 & 20 & 32 & 44 & 58 & 72 & 92 \\
23 & 0 & 0 & 0 & 0 & 0 & 0 & 0 & 1 & 3 & 7 & 12 & 18 & 26 & 36 & 47 & 61 & 76 & 94 \\
24 & 0 & 0 & 0 & 0 & 0 & 0 & 0 & 0 & 0 & 1 & 2 & 6 & 10 & 20 & 30 & 39 & 48 & 70 \\
\hline
25 & 0 & 0 & 0 & 0 & 0 & 0 & 0 & 0 & 2 & 6 & 10 & 15 & 22 & 31 & 41 & 53 & 66 & 82 \\
26 & 0 & 0 & 0 & 0 & 0 & 0 & 0 & 0 & 0 & 2 & 4 & 10 & 16 & 24 & 32 & 44 & 56 & 72 \\
27 & 0 & 0 & 0 & 0 & 0 & 0 & 0 & 0 & 0 & 3 & 6 & 9 & 12 & 21 & 30 & 39 & 48 & 60 \\
28 & 0 & 0 & 0 & 0 & 0 & 0 & 0 & 0 & 0 & 2 & 4 & 7 & 12 & 20 & 28 & 37 & 48 & 63 \\
29 & 0 & 0 & 0 & 0 & 0 & 0 & 0 & 0 & 0 & 3 & 6 & 11 & 16 & 23 & 32 & 41 & 52 & 67 \\
\hline
30 & 0 & 0 & 0 & 0 & 0 & 0 & 0 & 0 & 0 & 1 & 4 & 6 & 8 & 14 & 20 & 26 & 32 & 44 \\
31 & 0 & 0 & 0 & 0 & 0 & 0 & 0 & 0 & 0 & 2 & 5 & 9 & 14 & 19 & 27 & 37 & 48 & 61 \\
32 & 0 & 0 & 0 & 0 & 0 & 0 & 0 & 0 & 0 & 0 & 2 & 5 & 8 & 11 & 16 & 23 & 32 & 46 \\
\end{tabular}
\caption{$W(3,\mathbb{Z}_n,D/n)$ for different $n$ and $D$.}
\label{tab:bubble3_1}
\end{center}
\end{table}

\newpage

\begin{table}
\begin{center}
\begin{tabular}{c|ccccc|ccccc|ccccc|ccccc}
n $\backslash$ D & 18 & 19 & 20 & 21 & 22 & 23 & 24 & 25 & 26 & 27& 28 & 29 & 30 & 31 & 32    \\
\hline
5 &  &  &  &  &  &  &  &  &  &  &  &  &  &  &  \\
6 &  &  &  &  &  &  &  &  &  &  &  &  &  &  &  \\
7 &  &  &  &  &  &  &  &  &  &  &  &  &  &  &  \\
8 &  &  &  &  &  &  &  &  &  &  &  &  &  &  &  \\
9 &  &  &  &  &  &  &  &  &  &  &  &  &  &  &  \\
\hline
10 &  &  &  &  &  &  &  &  &  &  &  &  &  &  &  \\
11 &  &  &  &  &  &  &  &  &  &  &  &  &  &  &  \\
12 &  &  &  &  &  &  &  &  &  &  &  &  &  &  &  \\
13 &  &  &  &  &  &  &  &  &  &  &  &  &  &  &  \\
14 &  &  &  &  &  &  &  &  &  &  &  &  &  &  &  \\
\hline
15 &  &  &  &  &  &  &  &  &  &  &  &  &  &  &  \\
16 &  &  &  &  &  &  &  &  &  &  &  &  &  &  &  \\
17 &  &  &  &  &  &  &  &  &  &  &  &  &  &  &  \\
18 & 132 &  &  &  &  &  &  &  &  &  &  &  &  &  &  \\
19 & 144 & 171 &  &  &  &  &  &  &  &  &  &  &  &  &  \\
\hline
20 & 128 & 153 & 190 &  &  &  &  &  &  &  &  &  &  &  &  \\
21 & 114 & 141 & 168 & 196 &  &  &  &  &  &  &  &  &  &  &  \\
22 & 112 & 136 & 160 & 190 & 231 &  &  &  &  &  &  &  &  &  &  \\
23 & 114 & 137 & 162 & 190 & 220 & 253 &  &  &  &  &  &  &  &  &  \\
24 & 90 & 112 & 135 & 160 & 188 & 217 & 248 &  &  &  &  &  &  &  &  \\
\hline
25 & 100 & 119 & 140 & 170 & 200 & 231 & 264 & 300 &  &  &  &  &  &  &  \\
26 & 88 & 108 & 128 & 154 & 180 & 210 & 240 & 276 & 325 &  &  &  &  &  &  \\
27 & 72 & 99 & 126 & 153 & 180 & 210 & 240 & 276 & 312 & 351 &  &  &  &  &  \\
28 & 79 & 97 & 116 & 139 & 163 & 189 & 216 & 252 & 288 & 325 & 378 &  &  &  &  \\
29 & 83 & 101 & 120 & 142 & 166 & 193 & 222 & 254 & 288 & 325 & 364 & 406 &  &  &  \\
\hline
30 & 56 & 68 & 80 & 110 & 140 & 168 & 192 & 228 & 264 & 300 & 336 & 378 & 435 &  &  \\
31 & 76 & 92 & 110 & 130 & 152 & 177 & 204 & 234 & 266 & 301 & 338 & 378 & 420 & 465 &  \\
32 & 60 & 78 & 95 & 115 & 135 & 157 & 180 & 211 & 244 & 277 & 312 & 351 & 392 & 435 & 496 \\
\end{tabular}
\caption{$W(3,\mathbb{Z}_n,D/n)$ for different $n$ and $D$.}
\label{tab:bubble3_2}
\end{center}
\end{table}

\newpage 

\begin{table}
\begin{center}
\begin{tabular}{c|ccccc|ccccc|ccccc|ccccc}
n $\backslash$ D & 0 & 1 & 2 & 3 & 4 & 5 & 6 & 7 & 8 & 9 & 10 & 11 & 12 & 13 & 14 & 15 & 16 & 17   \\
\hline
5 & 0 & 0 & 0 & 0 & 2 & 10 &  &  &  &  &  &  &  &  &  &  &  &  \\
6 & 0 & 0 & 0 & 0 & 0 & 4 & 8 &  &  &  &  &  &  &  &  &  &  &  \\
7 & 0 & 0 & 0 & 0 & 0 & 3 & 9 & 21 &  &  &  &  &  &  &  &  &  &  \\
8 & 0 & 0 & 0 & 0 & 0 & 0 & 4 & 12 & 28 &  &  &  &  &  &  &  &  &  \\
9 & 0 & 0 & 0 & 0 & 0 & 1 & 2 & 11 & 20 & 30 &  &  &  &  &  &  &  &  \\
\hline
10 & 0 & 0 & 0 & 0 & 0 & 0 & 0 & 4 & 8 & 24 & 45 &  &  &  &  &  &  &  \\
11 & 0 & 0 & 0 & 0 & 0 & 0 & 0 & 5 & 11 & 21 & 35 & 55 &  &  &  &  &  &  \\
12 & 0 & 0 & 0 & 0 & 0 & 1 & 2 & 5 & 8 & 18 & 28 & 39 & 52 &  &  &  &  &  \\
13 & 0 & 0 & 0 & 0 & 0 & 0 & 0 & 2 & 6 & 13 & 22 & 36 & 54 & 78 &  &  &  &  \\
14 & 0 & 0 & 0 & 0 & 0 & 0 & 0 & 0 & 0 & 6 & 12 & 24 & 36 & 60 & 91 &  &  &  \\
\hline
15 & 0 & 0 & 0 & 0 & 0 & 1 & 2 & 5 & 8 & 12 & 20 & 30 & 40 & 58 & 76 & 95 &  &  \\
16 & 0 & 0 & 0 & 0 & 0 & 0 & 0 & 0 & 0 & 3 & 8 & 16 & 24 & 40 & 60 & 84 & 120 &  \\
17 & 0 & 0 & 0 & 0 & 0 & 0 & 0 & 0 & 0 & 4 & 8 & 15 & 25 & 39 & 56 & 78 & 104 & 136 \\
18 & 0 & 0 & 0 & 0 & 0 & 0 & 0 & 0 & 0 & 4 & 8 & 12 & 16 & 34 & 52 & 70 & 88 & 110 \\
19 & 0 & 0 & 0 & 0 & 0 & 0 & 0 & 0 & 0 & 2 & 5 & 9 & 15 & 27 & 40 & 58 & 79 & 105 \\
\hline
20 & 0 & 0 & 0 & 0 & 0 & 0 & 0 & 0 & 0 & 0 & 0 & 4 & 8 & 16 & 25 & 36 & 48 & 80 \\
21 & 0 & 0 & 0 & 0 & 0 & 0 & 0 & 1 & 2 & 3 & 9 & 15 & 22 & 33 & 42 & 59 & 77 & 95 \\
22 & 0 & 0 & 0 & 0 & 0 & 0 & 0 & 0 & 0 & 0 & 0 & 0 & 0 & 10 & 20 & 32 & 44 & 64 \\
23 & 0 & 0 & 0 & 0 & 0 & 0 & 0 & 0 & 0 & 0 & 1 & 4 & 7 & 13 & 20 & 31 & 45 & 63 \\
24 & 0 & 0 & 0 & 0 & 0 & 0 & 0 & 0 & 0 & 1 & 2 & 6 & 10 & 20 & 30 & 39 & 48 & 70 \\
\hline
25 & 0 & 0 & 0 & 0 & 0 & 0 & 0 & 0 & 0 & 0 & 0 & 2 & 4 & 8 & 14 & 20 & 27 & 41 \\
26 & 0 & 0 & 0 & 0 & 0 & 0 & 0 & 0 & 0 & 0 & 0 & 0 & 0 & 4 & 8 & 16 & 24 & 38 \\
27 & 0 & 0 & 0 & 0 & 0 & 0 & 0 & 0 & 0 & 0 & 0 & 0 & 3 & 6 & 11 & 17 & 26 & 38 \\
28 & 0 & 0 & 0 & 0 & 0 & 0 & 0 & 0 & 0 & 0 & 0 & 0 & 0 & 0 & 3 & 9 & 16 & 26 \\
29 & 0 & 0 & 0 & 0 & 0 & 0 & 0 & 0 & 0 & 0 & 0 & 0 & 1 & 3 & 7 & 13 & 20 & 28 \\
\hline
30 & 0 & 0 & 0 & 0 & 0 & 0 & 0 & 0 & 0 & 0 & 0 & 0 & 0 & 0 & 0 & 4 & 8 & 17 \\
31 & 0 & 0 & 0 & 0 & 0 & 0 & 0 & 0 & 0 & 0 & 0 & 0 & 0 & 2 & 5 & 10 & 15 & 23 \\
32 & 0 & 0 & 0 & 0 & 0 & 0 & 0 & 0 & 0 & 0 & 0 & 0 & 0 & 0 & 1 & 4 & 9 & 15 \\
\end{tabular}
\caption{$W(4,\mathbb{Z}_n,D/n)$ for different $n$ and $D$.}
\label{tab:bubble4_1}
\end{center}
\end{table}

\newpage

\begin{table}
\begin{center}
\begin{tabular}{c|ccccc|ccccc|ccccc|ccccc}
n $\backslash$ D & 18 & 19 & 20 & 21 & 22 & 23 & 24 & 25 & 26 & 27& 28 & 29 & 30 & 31 & 32    \\
\hline
5 &  &  &  &  &  &  &  &  &  &  &  &  &  &  & \\
6 &  &  &  &  &  &  &  &  &  &  &  &  &  &  & \\ 
7 &  &  &  &  &  &  &  &  &  &  &  &  &  &  &  \\
8 &  &  &  &  &  &  &  &  &  &  &  &  &  &  &  \\
9 &  &  &  &  &  &  &  &  &  &  &  &  &  &  &  \\
\hline
10 &  &  &  &  &  &  &  &  &  &  &  &  &  &  &  \\
11 &  &  &  &  &  &  &  &  &  &  &  &  &  &  &  \\
12 &  &  &  &  &  &  &  &  &  &  &  &  &  &  &  \\
13 &  &  &  &  &  &  &  &  &  &  &  &  &  &  &  \\
14 &  &  &  &  &  &  &  &  &  &  &  &  &  &  &  \\
\hline
15 &  &  &  &  &  &  &  &  &  &  &  &  &  &  &  \\
16 &  &  &  &  &  &  &  &  &  &  &  &  &  &  &  \\
17 &  &  &  &  &  &  &  &  &  &  &  &  &  &  &  \\
18 & 132 &  &  &  &  &  &  &  &  &  &  &  &  &  &  \\
19 & 135 & 171 &  &  &  &  &  &  &  &  &  &  &  &  &  \\
\hline
20 & 112 & 144 & 190 &  &  &  &  &  &  &  &  &  &  &  &  \\
21 & 114 & 141 & 168 & 196 &  &  &  &  &  &  &  &  &  &  &  \\
22 & 84 & 112 & 140 & 180 & 231 &  &  &  &  &  &  &  &  &  &  \\
23 & 82 & 108 & 137 & 171 & 209 & 253 &  &  &  &  &  &  &  &  &  \\
24 & 90 & 112 & 135 & 160 & 188 & 217 & 248 &  &  &  &  &  &  &  &  \\
\hline
25 & 56 & 72 & 90 & 130 & 170 & 210 & 252 & 300 &  &  &  &  &  &  &  \\
26 & 52 & 70 & 88 & 116 & 144 & 180 & 216 & 264 & 325 &  &  &  &  &  &  \\
27 & 52 & 70 & 90 & 114 & 142 & 175 & 211 & 254 & 300 & 351 &  &  &  &  &  \\
28 & 37 & 52 & 68 & 91 & 114 & 140 & 168 & 216 & 264 & 312 & 378 &  &  &  &  \\
29 & 40 & 53 & 71 & 91 & 116 & 144 & 175 & 213 & 254 & 300 & 350 & 406 &  &  &  \\
\hline
30 & 27 & 38 & 50 & 66 & 83 & 101 & 120 & 168 & 216 & 264 & 312 & 365 & 435 &  &  \\
31 & 32 & 43 & 55 & 75 & 95 & 119 & 147 & 179 & 214 & 256 & 301 & 351 & 405 & 465 &  \\
32 & 23 & 33 & 46 & 60 & 76 & 96 & 116 & 148 & 184 & 224 & 264 & 312 & 364 & 420 & 496 \\
\end{tabular}
\caption{$W(4,\mathbb{Z}_n,D/n)$ for different $n$ and $D$.}
\label{tab:bubble4_2}
\end{center}
\end{table}

\newpage

\begin{table}
\begin{center}
\begin{tabular}{c|ccccc|ccccc|ccccc|ccccc}
n $\backslash$ D & 0 & 1 & 2 & 3 & 4 & 5 & 6 & 7 & 8 & 9 & 10 & 11 & 12 & 13 & 14 & 15 & 16 & 17   \\
\hline
5 & 0 & 0 & 0 & 0 & 0 & 10 &  &  &  &  &  &  &  &  &  &  &  &  \\
6 & 0 & 0 & 0 & 0 & 0 & 4 & 15 &  &  &  &  &  &  &  &  &  &  &  \\
7 & 0 & 0 & 0 & 0 & 0 & 1 & 6 & 21 &  &  &  &  &  &  &  &  &  &  \\
8 & 0 & 0 & 0 & 0 & 0 & 0 & 2 & 10 & 28 &  &  &  &  &  &  &  &  &  \\
9 & 0 & 0 & 0 & 0 & 0 & 0 & 2 & 8 & 18 & 36 &  &  &  &  &  &  &  &  \\
\hline
10 & 0 & 0 & 0 & 0 & 0 & 0 & 0 & 0 & 0 & 20 & 45 &  &  &  &  &  &  &  \\
11 & 0 & 0 & 0 & 0 & 0 & 0 & 0 & 2 & 6 & 15 & 30 & 55 &  &  &  &  &  &  \\
12 & 0 & 0 & 0 & 0 & 0 & 0 & 0 & 0 & 0 & 9 & 20 & 38 & 66 &  &  &  &  &  \\
13 & 0 & 0 & 0 & 0 & 0 & 0 & 0 & 0 & 1 & 6 & 14 & 28 & 48 & 78 &  &  &  &  \\
14 & 0 & 0 & 0 & 0 & 0 & 0 & 0 & 0 & 0 & 2 & 4 & 14 & 24 & 54 & 91 &  &  &  \\
\hline
15 & 0 & 0 & 0 & 0 & 0 & 0 & 0 & 0 & 0 & 2 & 6 & 9 & 12 & 42 & 72 & 105 &  &  \\
16 & 0 & 0 & 0 & 0 & 0 & 0 & 0 & 0 & 0 & 0 & 2 & 7 & 14 & 30 & 50 & 78 & 120 &  \\
17 & 0 & 0 & 0 & 0 & 0 & 0 & 0 & 0 & 0 & 0 & 1 & 6 & 14 & 26 & 42 & 66 & 96 & 136 \\
18 & 0 & 0 & 0 & 0 & 0 & 0 & 0 & 0 & 0 & 0 & 0 & 3 & 8 & 18 & 29 & 47 & 72 & 106 \\
19 & 0 & 0 & 0 & 0 & 0 & 0 & 0 & 0 & 0 & 0 & 0 & 1 & 3 & 12 & 24 & 41 & 62 & 91 \\
\hline
20 & 0 & 0 & 0 & 0 & 0 & 0 & 0 & 0 & 0 & 0 & 0 & 0 & 0 & 4 & 8 & 12 & 16 & 56 \\
21 & 0 & 0 & 0 & 0 & 0 & 0 & 0 & 0 & 0 & 0 & 0 & 0 & 0 & 5 & 13 & 24 & 38 & 54 \\
22 & 0 & 0 & 0 & 0 & 0 & 0 & 0 & 0 & 0 & 0 & 0 & 0 & 0 & 2 & 6 & 14 & 24 & 42 \\
23 & 0 & 0 & 0 & 0 & 0 & 0 & 0 & 0 & 0 & 0 & 0 & 0 & 0 & 0 & 2 & 10 & 21 & 38 \\
24 & 0 & 0 & 0 & 0 & 0 & 0 & 0 & 0 & 0 & 0 & 0 & 0 & 0 & 0 & 0 & 5 & 12 & 28 \\
\hline
25 & 0 & 0 & 0 & 0 & 0 & 0 & 0 & 0 & 0 & 0 & 0 & 0 & 0 & 0 & 0 & 0 & 0 & 10 \\
26 & 0 & 0 & 0 & 0 & 0 & 0 & 0 & 0 & 0 & 0 & 0 & 0 & 0 & 0 & 0 & 2 & 4 & 13 \\
27 & 0 & 0 & 0 & 0 & 0 & 0 & 0 & 0 & 0 & 0 & 0 & 0 & 0 & 0 & 0 & 0 & 4 & 13 \\
28 & 0 & 0 & 0 & 0 & 0 & 0 & 0 & 0 & 0 & 0 & 0 & 0 & 0 & 0 & 0 & 0 & 2 & 6 \\
29 & 0 & 0 & 0 & 0 & 0 & 0 & 0 & 0 & 0 & 0 & 0 & 0 & 0 & 0 & 0 & 0 & 0 & 4 \\
\hline
30 & 0 & 0 & 0 & 0 & 0 & 0 & 0 & 0 & 0 & 0 & 0 & 0 & 0 & 0 & 0 & 0 & 0 & 3 \\
31 & 0 & 0 & 0 & 0 & 0 & 0 & 0 & 0 & 0 & 0 & 0 & 0 & 0 & 0 & 0 & 0 & 0 & 2 \\
32 & 0 & 0 & 0 & 0 & 0 & 0 & 0 & 0 & 0 & 0 & 0 & 0 & 0 & 0 & 0 & 0 & 0 & 1 \\
\end{tabular}
\caption{$W(5,\mathbb{Z}_n,D/n)$ for different $n$ and $D$.}
\label{tab:bubble5_1}
\end{center}
\end{table}

\newpage

\begin{table}
\begin{center}
\begin{tabular}{c|ccccc|ccccc|ccccc|ccccc}
n $\backslash$ D & 18 & 19 & 20 & 21 & 22 & 23 & 24 & 25 & 26 & 27& 28 & 29 & 30 & 31 & 32    \\
\hline
5 &  &  &  &  &  &  &  &  &  &  &  &  &  &  & \\
6 &  &  &  &  &  &  &  &  &  &  &  &  &  &  & \\ 
7 &  &  &  &  &  &  &  &  &  &  &  &  &  &  &  \\
8 &  &  &  &  &  &  &  &  &  &  &  &  &  &  &  \\
9 &  &  &  &  &  &  &  &  &  &  &  &  &  &  &  \\
\hline
10 &  &  &  &  &  &  &  &  &  &  &  &  &  &  &  \\
11 &  &  &  &  &  &  &  &  &  &  &  &  &  &  &  \\
12 &  &  &  &  &  &  &  &  &  &  &  &  &  &  &  \\
13 &  &  &  &  &  &  &  &  &  &  &  &  &  &  &  \\
14 &  &  &  &  &  &  &  &  &  &  &  &  &  &  &  \\
\hline
15 &  &  &  &  &  &  &  &  &  &  &  &  &  &  &  \\
16 &  &  &  &  &  &  &  &  &  &  &  &  &  &  &  \\
17 &  &  &  &  &  &  &  &  &  &  &  &  &  &  &  \\
18 & 153 &  &  &  &  &  &  &  &  &  &  &  &  &  & \\ 
19 & 126 & 171 &  &  &  &  &  &  &  &  &  &  &  &  & \\
\hline 
20 & 96 & 136 & 190 &  &  &  &  &  &  &  &  &  &  &  & \\ 
21 & 72 & 117 & 162 & 210 &  &  &  &  &  &  &  &  &  &  & \\ 
22 & 60 & 90 & 120 & 170 & 231 &  &  &  &  &  &  &  &  &  & \\ 
23 & 56 & 83 & 114 & 153 & 198 & 253 &  &  &  &  &  &  &  &  & \\ 
24 & 44 & 65 & 88 & 123 & 164 & 212 & 276 &  &  &  &  &  &  &  & \\ 
\hline
25 & 20 & 30 & 40 & 90 & 140 & 190 & 240 & 300 &  &  &  &  &  &  & \\ 
26 & 24 & 40 & 56 & 84 & 112 & 152 & 192 & 252 & 325 &  &  &  &  &  & \\ 
27 & 24 & 39 & 58 & 81 & 108 & 143 & 182 & 233 & 288 & 351 &  &  &  &  & \\ 
28 & 10 & 23 & 36 & 55 & 74 & 96 & 120 & 180 & 240 & 300 & 378 &  &  &  &  \\
29 & 11 & 21 & 34 & 50 & 74 & 103 & 134 & 176 & 222 & 276 & 336 & 406 &  &  & \\ 
30 & 8 & 12 & 16 & 24 & 32 & 40 & 48 & 108 & 168 & 228 & 288 & 352 & 435 &  &  \\
\hline
31 & 5 & 10 & 20 & 33 & 49 & 72 & 100 & 133 & 168 & 215 & 266 & 325 & 390 & 465 & \\ 
32 & 2 & 8 & 17 & 26 & 38 & 56 & 76 & 106 & 139 & 179 & 222 & 278 & 338 & 406 & 496 \\
\end{tabular}
\caption{$W(5,\mathbb{Z}_n,D/n)$ for different $n$ and $D$.}
\label{tab:bubble5_2}
\end{center}
\end{table}

\newpage

\end{document}